\documentclass[11pt,a4paper]{article}
\usepackage[utf8]{inputenc}
\usepackage{microtype}
\usepackage{amsmath,amsthm,amssymb,amscd}
\usepackage{mathrsfs} 
\usepackage[T1]{fontenc}
\usepackage{ae,aecompl}
\usepackage{times}
\usepackage[british]{babel}
\usepackage{aliascnt}
\usepackage[colorlinks,pagebackref,hypertexnames=false]{hyperref} 
\usepackage{url}
\usepackage[alphabetic,backrefs]{amsrefs}

\usepackage{nth}
\usepackage{float}
\usepackage[all]{xy}
\usepackage{tikz-cd} 
\usepackage{bookmark}
\usepackage[margin=1.9cm]{geometry}
\usepackage{enumerate}
\usepackage{enumitem}


\newcommand{\rd}{{\rm d}}

\newcommand{\rj}{{\rm j}}

\newcommand{\rG}{{\rm G}}


\newcommand{\oep}{{\overline {\Upsilon}}}





\newcommand{\cD}{\mathcal{D}}

\newcommand{\cF}{\mathcal{F}}

\newcommand{\cL}{\mathcal{L}}

\newcommand{\cS}{\mathcal{S}}

\newcommand{\cZ}{\mathcal{Z}}



\newcommand{\fF}{{\mathfrak F}}

\newcommand{\fX}{{\mathfrak X}}




\newcommand{\R}{\mathbb{R}}


\newcommand{\SU}{{\rm SU}}


\renewcommand{\det}{\mathop\mathrm{det}\nolimits}

\renewcommand{\epsilon}{\varepsilon}
\newcommand{\eps}{\epsilon}

\newcommand{\Hol}{\mathrm{Hol}}

\newcommand{\Ric}{{\rm Ric}}

\newcommand{\del}{\partial}

\renewcommand{\Im}{\mathop{\mathrm{Im}}}

\renewcommand{\Re}{\mathop{\mathrm{Re}}}

\newcommand{\vol}{\mathrm{vol}}

\newcommand{\gt}{\texorpdfstring{\mathrm{G}_2}{\space}}

\renewcommand{\div}{\mathrm{div}}

\newcommand{\qandq}{\quad\text{and}\quad}
\newcommand{\qwithq}{\quad\text{with}\quad}
\newcommand{\qforq}{\quad\text{for}\quad}

\def\<{\mathopen{}\left<}
\def\>{\right>\mathclose{}}
\def\({\mathopen{}\left(}
\def\){\right)\mathclose{}}
    
\def\wtilde{\widetilde}

\usepackage{multicol, color}

\definecolor{gold}{rgb}{0.85,.66,0}
\definecolor{cherry}{rgb}{0.9,.1,.2}
\definecolor{burgundy}{rgb}{0.8,.2,.2}
\definecolor{orangered}{rgb}{0.85,.3,0}
\definecolor{orange}{rgb}{0.85,.4,0}
\definecolor{olive}{rgb}{.45,.4,0}
\definecolor{lime}{rgb}{.6,.9,0}
\definecolor{green}{rgb}{.2,.7,0}
\definecolor{grey}{rgb}{.4,.4,.2}
\definecolor{brown}{rgb}{.4,.3,.1}

\newtheorem{theorem}{Theorem}
\newtheorem{prop}[theorem]{Proposition}
\newtheorem{cor}[theorem]{Corollary}

\newtheorem{lemma}[theorem]{Lemma}

\numberwithin{substep}{step}

\numberwithin{subcase}{case}

\theoremstyle{remark}
\newtheorem{remark}[theorem]{Remark}

\theoremstyle{definition}
\newtheorem{definition}[theorem]{Definition}

\setlength{\marginparwidth}{3cm}
\usepackage{todonotes}
\usepackage{comment}

\numberwithin{theorem}{section}

\allowdisplaybreaks


\usepackage{titling}
\setlength{\droptitle}{-1.5cm}
\usepackage{changepage}

\begin{document}
	\title{Laplacian coflows of \texorpdfstring{G\textsubscript{2}}{G2}-structures on contact Calabi--Yau $7$-manifolds}
    \author{Henrique N. Sá Earp, Julieth Saavedra \& Caleb Suan}
    \date{\today}

\maketitle
	
\vspace{-0.5cm}	
\begin{abstract}
We explore three versions of the Laplacian coflow of $\rG_2$-structures on circle fibrations over Calabi--Yau $3$-folds, interpreting their dimensional reductions to the Kähler geometry of the base. Precisely, we reduce Ansätze for the Laplacian coflow, modified or not by {DeTurck's} trick, both on trivial products $CY^3\times S^1$ and on contact Calabi--Yau $7$-manifolds, obtaining in each case a natural modification of the Kähler--Ricci flow.   
\end{abstract}
	
\begin{adjustwidth}{0.95cm}{0.95cm}
    \tableofcontents
\end{adjustwidth}

\newpage

\section{Introduction} \label{Sect:Intro}

\subsection{Context}

We propose an investigation of co-evolving geometric flows, respectively in complex $3$-dimensional Kähler geometry and real $7$-dimensional $\rG_2$-geometry, mediated by dimensional reduction. Similar approaches in various special geometric contexts can be found in a substantial number of recent works, eg. \cites{FineYao2018, HuangWangYao2018,FinoRaffero2020, LambertLotay2021, KennonLotay2023,AshmoreMinasianProto2024}, with specific interests spanning over diverse areas of differential geometry, such as minimal submanifold theory, Yang–Mills theory, and generalized geometry. 
This article extends in a natural way two previous works by the authors and their collaborators, namely \cite{Picard2022flows} and \cite{Lotay2022}, exploring  geometric flows of $\rG_2$-structures on circle fibrations over Calabi--Yau $3$-folds and their repercussions on the Kähler geometry of the base. Concretely, we examine particular Ansätze for the Laplacian coflow, modified by {DeTurck's} trick, on Riemannian products $CY^3\times S^1$, and for Laplacian coflows, modified or not, on contact Calabi--Yau $7$-manifolds. We then interpret their counterparts `downstairs' as modified Kähler--Ricci flows. 

On an oriented and spin $7$-manifold $M$, geometric flows provide a method to deform a $\gt$-structure, given by a non-degenerate form $\varphi\in\Omega^3(M)$, towards `better' structures with special torsion and ultimately metrics with $\gt$ holonomy, which are then Ricci-flat. 
A $\gt$-structure $\varphi$ determines a metric $g_{\varphi}$ and orientation with Riemannian volume form $\vol_{\varphi}$, and its \emph{torsion} $T$ is a 2-tensor which is equivalent to $\nabla^{g_\varphi}\varphi$, see \S \ref{Sect:Prelims.coclosed} below. Pairs $(M^7,\varphi)$ such that $T\equiv0$  are called $\gt$-\emph{manifolds} and are of particular interest, since the holonomy group of $g_{\varphi}$ is then contained in $\gt$. However, complete examples of $\gt$-manifolds are very difficult to construct, especially when $M$ is compact.  
Fernandez and Gray \cite{Fernandez1982} showed that the torsion-free condition is equivalent to $\varphi$ being both \emph{closed} and \emph{coclosed}, i.e,  $\rd\varphi=0$ and $\rd\!\ast_\varphi\!\varphi=0$, where $*_{\varphi}$ is the Hodge star.  This alternative viewpoint on the torsion-free condition as a system of nonlinear PDE is fundamental to several trending methods in $\gt$-geometry.

Our prototypical goal is to study the  \emph{Laplacian coflow (LCF)} of $\gt$-structures, introduced in \cite{Karigiannis2012}:
\begin{align} \label{eq: Laplacian.coflow}
	\frac{\partial\psi_t}{\partial t}
	=\Delta_t\psi_t
	:=(\rd\rd^{\ast_t}+\rd^{\ast_t}\rd)\psi_t,
\end{align}
where $\ast_t$ is the Hodge star of $g_{t}:=g_{\varphi_t}$, the $4$-form  $\psi_t:=\ast_t\varphi_t$ is the dual of the $\gt$-structure,  and $\Delta_t$ 
is the Hodge Laplacian; and the \emph{modified Laplacian coflow (MLCF)} introduced by Grigorian in \cite{Grigorian2013}:
\begin{align} \label{Eq:Modified.coflow}
    \frac{\del}{\del t} \psi_t = \Delta_t \psi_t + \rd\Big( \Big(A - \frac{7}{2}(\tau_0)_t \Big) \varphi_t\Big),
    \qforq A\in\R,
\end{align}
{where $(\tau_0)_t$ is the scalar component of the intrinsic torsion of $\varphi_t$.}

If $M$ is compact, stationary points of \eqref{eq: Laplacian.coflow} would be (dual to) torsion-free $\gt$-structures. Moreover, when an initial condition $\psi_0$ is closed, ie. ~the $\gt$-structure $\varphi_0$ is coclosed, solutions of  \eqref{eq: Laplacian.coflow}  preserve the cohomology class  $[\psi_t]=[\psi_0]\in H^4(M)$, for as long as they exist. Indeed \eqref{eq: Laplacian.coflow} can be interpreted as the gradient flow of \emph{Hitchin's volume functional \cite{Hitchin2001a}} and so the volume of $M$ increases monotonically along the flow, see eg. \cite{Grigorian2013}, however it is not even weakly parabolic; coflows of $\gt$-structures have been  studied eg. by \cites{Karigiannis2012, Grigorian2013, Fino2018, BagagliniFernandezFino2020, Grigorian2020, KennonLotay2023}. On the other hand, the modified Laplacian coflow \eqref{Eq:Modified.coflow} also preserves the coclosed condition and stays within the initial cohomology class, and it does have short-time existence and uniqueness, but the extra term {added to make the flow amenable to DeTurck's trick} introduces stationary points which are not torsion-free. {In particular, if $\varphi$ is a nearly parallel $\gt$-structure, that is
\begin{align}
    d\varphi = \lambda \psi, \quad d\psi = 0, \qforq \lambda > 0
\end{align}
and if $A = \frac{5}{4} \lambda$, then $\varphi$ is a fixed point for the modified coflow.}

In \cite{Picard2022flows}, a thorough analysis is presented on the dynamics of $\rG_2$-flows, in particular relating the Laplacian coflow of $\rG_2$-structures on a trivial product $N^3\times S^1$ of a Calabi--Yau $3$-fold $N$, to Kähler--Ricci flow on the base. On yet another hand, 
\cite{Lotay2022} explores a convenient Ansatz for the LCF of $\rG_2$-structures on contact Calabi--Yau (cCY) 7-manifolds, which are \emph{non-trivial} such products. That investigation unravels the behavior of $\rG_2$-structures under these flows, revealing findings on existence, uniqueness, and the development of singularities. 
It is therefore natural to consider what flows would emerge on the base $3$-folds under the classical modification by a {DeTurck} trick. Thus with this paper we exhaust in total the four cases of Laplacian coflows to consider: {whether or not the circle fibration over the Calabi--Yau 3-fold is trivial, and whether or not the coflow includes Grigorian's modified term. 
In order to adjust expectations, we clarify that, while \cite{Picard2022flows} are able to relate the LCF on a trivial fibration to the well-studied Kähler-Ricci flow on $N^3$, hence obtaining knowledge about the LCF on $M^7$ from the basic flow, we make no similar claim. Rather, we identify flows on $N$ which are indeed \emph{less understood}, since they are to the best of our knowledge completely new in the literature, and may henceforth be conversely \emph{motivated by} the correspondence with the (M)LCF. They could be duly referred to as \emph{modified Kähler-Ricci flows} and hopefully inspire future analytic investigation, which was beyond the scope of this initial study.} 

Adopting a concise review of pertinent literature, we presume the reader's familiarity with $\rG_2$- and Kähler--Ricci flows, aiming to present a short paper where we compute the behavior of $\rG_2$-structures under similar Ansätze for the Laplacian coflow, as well as their induced  modified versions of the Kähler--Ricci flow on the Calabi--Yau $3$-fold. While we will introduce the  immediately necessary concepts and notation, we refer the reader to those two articles and references therein for further background and context. 

\subsection{Overview and main results} \label{Subsect:Overview}

\begin{itemize}
    \item In \S \ref{Sect:Product} we follow \cite{Picard2022flows} and look at solutions to the modified coflow on the product $M^7 = N \times S^1$, where $N$ is a Calabi--Yau $3$-fold. 
    Specifically, in Theorem \ref{Thm:Modified.coflow.product}, we consider a family of $\SU(3)$-structures $(\omega_t, \Upsilon_t)\in (\Omega^{1,1}\times \Omega^{3,0})(N)$ satisfying the system of differential equations
    \begin{align} \label{Eq. system}
    \begin{split}
        \frac{\partial}{\partial t} \omega_t 
        &= - \cL_{\nabla_t (\log|\Upsilon_t|_{\omega_t})} \omega_t + \beta_t , \\
        \frac{\partial}{\partial t} \Upsilon_t 
        &= \cL_{\nabla_t (\log|\Upsilon_t|_{\omega_t})} \Upsilon_t + \gamma_t.
    \end{split}
    \end{align}
    Pulling back the $\SU(3)$-structure to $M$, the family of $\rG_2$-structures  given by $$\varphi_t = \Re \left( \frac{1}{|\Upsilon|_{\omega_t}} \Upsilon \right) + |\Upsilon|_{\omega_t} \rd r \wedge \omega_t$$ 
    is a solution of the modified Laplacian coflow with constant $A$ if, and only if,
    \begin{align*}
        \beta_t \wedge \omega_t 
        &= -A \frac{1}{|\Upsilon|_{\omega}} \rd (\log|\Upsilon_t|_{\omega_t}) \wedge \Re (\Upsilon_t),  \\
        \Im (\gamma_t) 
        &= A|\Upsilon_t|_{\omega_t} \rd (\log|\Upsilon_t|_{\omega_t}) \wedge \omega_t. 
    \end{align*}
    If moreover the complex structure on $N$ is fixed along the flow, then $\beta_t^{(1,1)} = 0$ and $\Im (\gamma_t)^{(0,3) \oplus (3,0)} = 0.$

    \item In \S \ref{Sect:cCY}, we consider families of coclosed $\rG_2$-structures on contact Calabi--Yau (cCY) $7$-manifolds and explore solutions to both the standard and the modified Laplacian coflows. Sasakian deformations that fix the Reeb vector field $\xi$ are characterized by a basic function, in the sense that the contact $1$-form and the transverse Kähler form are given respectively by $$
    \eta_t=\eta + \rd^c f_t
    \qandq
    \omega_t=\omega + \rd \rd^c f_t.
    $$
    Obtaining from this Ansatz the natural family of $\gt$-structures given by $\varphi_t = \Re \left( \frac{1}{|\Upsilon|_{\omega_t}} \Upsilon \right) + |\Upsilon|_{\omega_t} \eta \wedge \omega_t$, we conclude in Theorem \ref{Thm:Laplacian.coflow.cCY.fixed.J} that they are solutions of the Laplacian flow if, and only if,
    \begin{align*}
        \beta_t \wedge \omega_t 
        &=\; 2|\Upsilon_t|^2_{\omega_t} \omega_t^2 - \left( \cL_{\nabla (\log|\Upsilon_t|_{\omega_t})} \rd^c f_t \right) \wedge \Im \Upsilon_t  \\
        & - \Big( \nabla (\log|\Upsilon_t|_{\omega_t}) \lrcorner \omega \Big) \wedge \Im \Upsilon_t + \left( \frac{\del}{\del t} \rd^c f_t \right) \wedge \Im \Upsilon_t, \\
        \Im (\gamma_t) 
        &=\; 4|\Upsilon|_{\omega_t}^2 \rd \left( \log|\Upsilon_t|_{\omega_t} \right) \wedge \omega_t.
    \end{align*}

    Moreover, in Theorem \ref{thm:modified.coflow.cCY.fixed.J}, $\{\varphi_t\}$ will be a solution of the modified Laplacian coflow if, and only if,
    \begin{align*}
        \beta_t \wedge \omega_t &= -|\Upsilon_t|^2_{\omega_t} \omega_t^2 + A |\Upsilon_t|_{\omega_t} \omega_t^2 - \frac{A}{|\Upsilon_t|_{\omega_t}} \rd (\log |\Upsilon_t|_{\omega_t}) \wedge \Re \Upsilon_t \nonumber \\
        &\qquad  - \left( \cL_{\nabla_t (\log|\Upsilon_t|_{\omega_t})} \rd^c f_t \right) \wedge \Im \Upsilon_t - \Big( \nabla_t (\log|\Upsilon_t|_{\omega_t}) \lrcorner \omega \Big) \wedge \Im \Upsilon_t + \left( \frac{\del}{\del t} \rd^c f_t \right) \wedge \Im \Upsilon_t,  \\
        \Im (\gamma_t) &= -2|\Upsilon_t|_{\omega_t}^2 \rd \left( \log|\Upsilon_t|_{\omega_t} \right) \wedge \omega_t+ A |\Upsilon_t|_{\omega_t} \rd \left( \log|\Upsilon_t|_{\omega_t} \right) \wedge \omega_t.
    \end{align*}

    \item In \S \ref{Sect:Modified.coflow.LSES}, we explore solutions to the modified coflow on a contact Calabi--Yau $7$-manifold $(M^7,\eta, \Phi, \Upsilon)$, based on the Ansatz studied in \cite{Lotay2022}:
    $$\varphi_t=b_t^3\Re \Upsilon +a_tb_t^2\eta\wedge\omega, \qquad \varphi_0 = \Re \Upsilon + \epsilon \eta \wedge \omega$$ 
    In particular, Theorem \ref{Thm:modified.coflow.LSES}, Corollary \ref{Cor: modified.coflow.A=0.LSES}, and \S \ref{Subsect:Sing} exhibit the dynamics of such solutions, including singularity formation, for various regimes of the constant $A$ and Sasakian fiber radius $\epsilon$, cf. Table \ref{table:summary}. 

    In the $A = 0$ case, we obtain an explicit expression for the solution of the modified coflow
    $$\varphi_t = (1 - 5\epsilon^2 t)^{\frac{3}{10}} \Re \Upsilon + \epsilon (1 - 5 \epsilon^2 t)^{-\frac{1}{10}} \eta \wedge \omega.$$
    Using this, we also analyze the asymptotic behaviour of the Ansatz solution in the $A = 0$ case near its finite-time singularity.

    \item In \S \ref{Sect:Breaking.Sasakian} we explore solutions on a $cCY^7$ manifold $(M^7, \eta, \Phi, \Upsilon)$ that vary from the initial Sasakian structure by a transverse $\SU(3)$-structure $(\omega'_t, \Upsilon_t)$ and contact $1$-form $\eta_t$, where $\omega'_t \in [\rd \eta_t]_B$:
    \begin{align*}
        \frac{\del}{\del t} \eta_t 
        &= \cL_{\nabla_t (\log |\Upsilon_t|_{\omega'_t})} \eta_t + \alpha_t,  \\
        \frac{\del}{\del t} \omega'_t 
        &= - \cL_{\nabla_t (\log |\Upsilon_t|_{\omega'_t})} \omega'_t + \beta_t, \\
        \frac{\del}{\del t} \Upsilon_t 
        &= \cL_{\nabla_t (\log |\Upsilon_t|_{\omega'_t})} \Upsilon_t + \gamma_t.
    \end{align*}
   Defining a $\gt$-structure in a similar way as before, Theorem \ref{Thm:Laplacian.coflow.ccY.moving} gives a solution to the Laplacian coflow if, and only if, those degrees of freedom satisfy 
    \begin{align*}
        \alpha_t 
        &= 0, \\
        -\eta_t \wedge \gamma_t - \omega'_t \wedge \beta_t 
        &= 2 |\Upsilon|_{\omega'} \rd (\log |\Upsilon|_{\omega'}) \wedge \eta_t \wedge \left[ 3\omega'_t - \rd \eta_t \right] + |\Upsilon|_{\omega'}^2 \rd \eta \wedge \left[ 3\omega'_t - \rd \eta_t \right].
    \end{align*}
    In the case of the modified Laplacian coflow, Theorem \ref{Thm:Modified.coflow.ccY.moving} identifies the constraints
    \begin{align*}
        \alpha_t 
        &= \frac{A}{|\Upsilon_t|_{\omega_t'}} \rd ( \log |\Upsilon_t|_{\omega_t'}), \\
        -\eta_t \wedge \gamma_t - \omega'_t \wedge \beta_t 
        &= 2 |\Upsilon|_{\omega'} \rd (\log |\Upsilon|_{\omega'}) \wedge \eta_t \wedge \left[ 3\omega'_t - \rd \eta_t \right] + |\Upsilon|_{\omega'}^2 \rd \eta \wedge \left[ 3\omega'_t - \rd \eta_t \right] \\
        &\; +A |\Upsilon_t|_{\omega_t'} \rd (\log |\Upsilon_t|_{\omega_t'}) \wedge \eta_t \wedge \omega_t' + A |\Upsilon_t|_{\omega_t'} \rd \eta_t \wedge \omega_t' \\
        &\; -6 |\Upsilon_t|^2_{\omega_t'} \rd (\log |\Upsilon_t|_{\omega_t'}) \wedge \eta_t \wedge \omega_t' -3 |\Upsilon_t|^2_{\omega_t'} \rd \eta_t \wedge \omega_t'.
    \end{align*}
    Finally, a discussion on potentially solving these equations follows in \S \ref{Sect:Possible.solutions}.
\end{itemize}

\bigskip 

\noindent\textbf{Acknowledgements:} The authors would like to thank S\'{e}bastien Picard and Eveline Legendre  for some valuable discussions. The authors also thank the Banff International Research Station (BIRS) and the organizers (Ilka Agricola, Shubham Dwivedi, Sergey Grigorian, Jason Lotay, and Spiro Karigiannis) of the workshop ``Spinoral and octonionic aspects of $\rG_2$ and Spin(7) geometry", where the idea for this paper came about.

CS was supported by a \emph{Four Year Fellowship (4YF) for PhD Students} from the University of British Columbia.
HSE was supported by the  São Paulo Research Foundation (Fapesp)    \mbox{[2021/04065-6]} \emph{BRIDGES collaboration} and the Brazilian National Council for Scientific and Technological Development (CNPq)  \mbox{[311128/2020-3]}.
JPS was supported by Instituto Serrapilheira grant \emph{New perspectives of the min-max theory for the area functional}. 

\subsection{Notation and conventions in  \texorpdfstring{$\rG_2$}{G2}-geometry} \label{Sect:Prelims.coclosed}

Let $(M^7,\varphi)$ be a smooth orientable $\gt$-structure manifold. It determines a Riemannian metric $g_{\varphi}$ and volume $\vol_{\varphi}$ by
$$(X\lrcorner\varphi)\wedge(Y\lrcorner \varphi)\wedge\varphi
=6g_{\varphi}(X,Y)\vol_{\varphi},
\qforq X,Y\in\Gamma(TM),
$$
where $\lrcorner$ denotes the interior product. A $\mathrm{G}_2$-structure gives rise to a $g_\varphi$-orthogonal decomposition of differential forms corresponding to irreducible $\rG_2$-representations: 
\begin{align}
\label{eq:form.decomp}
	\Omega^2 &= \Omega^2_7\oplus\Omega^2_{14}\qandq
	\Omega^3  = \Omega^3_1\oplus\Omega_{7}^{3}\oplus\Omega^3_{27},
\end{align}
where $\Omega^k_l$ has (pointwise) dimension $l$.  Via the Hodge star, this defines isomorphic  decompositions of   $\Omega^5$ and  $\Omega^4$, respectively. Given a $\rG_2$-structure $\varphi$, there exist unique \emph{torsion forms} $\tau_0\in\Omega^0$, $\tau_1\in\Omega^1$,  $\tau_2\in\Omega_{14}^2$ and $\tau_3\in\Omega^3_{27}$, such that
\begin{align}
    \rd\varphi 
    &= \tau_0\psi+3\tau_1\wedge\varphi+\ast\tau_3,
	\label{Eq:Fernandez d} \\
	\rd\psi 
	&= 4\tau_1\wedge\psi+\tau_2\wedge\varphi,
	\label{eq: Fernandez dpsi}  
\end{align}
see e.g.~\cite{Bryant2006}*{Proposition 1}. The \emph{intrinsic torsion} is defined with respect to the Levi-Civita connection of the $\rG_2$-metric by $\nabla\varphi:=\nabla^{g_\varphi}\varphi$. Then, the \emph{full torsion tensor} of $\varphi$ is the $2$-tensor $T$ defined by  
\begin{align}
    \nabla_i\varphi_{jkl}
    =T_i^{m}\psi_{mjkl}, \quad T_i^{\,\, j}
    =\frac{1}{24}\nabla_i\varphi_{lmn}\psi^{jlmn},
\end{align}
see \cite{Karigiannis2007}, and $T_{ij}
=T(\partial_i,\partial_j)$ and $T_i^{\,\, j}=T_{ik}g^{jk}$ and may be expressed in terms of the torsion forms by 
\begin{equation} \label{Eq:Torsion}
	T =\frac{\tau_0}{4}g -\tau_1^{\sharp}\lrcorner\varphi -\frac{1}{2}\tau_2 -\frac{1}{4}\rj_{\varphi}(\tau_3),
\end{equation}
where $j_{\varphi}$ is a linear operator $ \rj_{\varphi}:\Omega^3\rightarrow S^2$ by
\begin{align} \label{Eq:j.operator}
	\rj_{\varphi}(\gamma)(X,Y)
	&=\ast_{\varphi}((X\lrcorner \varphi)\wedge (Y\lrcorner \varphi)\wedge\gamma),
\end{align}
see e.g.~\cite{Karigiannis2007}*{Theorem 2.27}.

\section{The modified Laplacian coflow on \texorpdfstring{$M^7 = N^3 \times S^1$}{M7 =N X S1}} \label{Sect:Product}

We apply the methods from \cite{Picard2022flows} to the modified Laplacian coflow \cite{Grigorian2013}. We note that the sign convention and orientation here are opposite to those chosen in \cite{Picard2022flows}.

Let $M^7 = N^3 \times S^1$, where $N$ is a smooth compact Calabi--Yau $3$-manifold. Let $\omega$ be a K\"{a}hler metric and $\Upsilon$ be a nowhere-vanishing holomorphic $(3,0)$-form on $N$. Both $\omega$ and $\Upsilon$ are closed and, in local Darboux coordinates, we may write
\begin{align}
    \omega &= \frac{i}{2} (g_6)_{p\overline{q}} \rd z^p \wedge \rd \overline{z}^q \\
    \Upsilon &= u \rd z^1 \wedge \rd z^2 \wedge \rd z^3
\end{align}
where $g_6 = (g_6)_{p\overline{q}}$ is the metric associated to $\omega$ and $u$ is a local holomorphic function. The norm of $\Upsilon$ with respect to $\omega$ is given by  
\begin{align}
    |\Upsilon|^2_{\omega} = \frac{|u|^2}{\det (g_6)_{p\overline{q}}}
\end{align}
and it is constant when $\omega$ is Ricci-flat. The pair $(\omega,\Upsilon)$ satisfies the following relations: 
\begin{equation}
    \frac{\omega^3}{3!} = \vol_6 = \frac{i}{8}\frac{1}{|\Upsilon|_{\omega}^2} \Upsilon \wedge \bar{\Upsilon} = \frac{1}{4} \Re \left( \frac{1}{|\Upsilon|_{\omega}} \Upsilon \right) \wedge \Im \left( \frac{1}{|\Upsilon|_{\omega}} \Upsilon \right), 
\end{equation}
where $\vol_6$ is the volume form on $(N,g_6)$. 
The Hodge star operator $\ast_6$ on $N$ has the foowing properties: 
\begin{equation}
    (\ast_6)^2 \alpha 
    = (-1)^k \alpha, \quad \ast_6 \Re (\Upsilon) 
    = \Im \Upsilon, \quad \ast_6 \omega 
    = \frac{1}{2} \omega^2, 
    \qforq
    \alpha\in\Omega^k(N).
\end{equation}
Let $r$ denote the angle coordinate on $S^1$, so $\rd r \in\Omega^1(S^1)$ is the globally defined (volume) form on $S^1$ with respect to its standard round metric.

Now, we consider the natural $\rG_2$-structure on $M$ given by the positive $3$-form
\begin{equation} \label{Eq:varphi.product}
    \varphi = \Re \left( \frac{1}{|\Upsilon|_{\omega}} \Upsilon \right) + |\Upsilon|_{\omega} \rd r \wedge \omega,
\end{equation}
cf \cite{Karigiannis2012}. The $3$-form \eqref{Eq:varphi.product}  induces the metric $g_7$, volume form and dual $4$-form $\psi = \ast \varphi$ given by 
\begin{align}
   g &= |\Upsilon|_{\omega} \rd r^2 + g_6, \label{Eq:g.product}\\
   \vol &= |\Upsilon|_{\omega} \rd r \wedge \vol_6, \label{Eq:vol.product}\\
   \psi &= - \rd r \wedge \Im \Upsilon + \frac{1}{2}\omega^2. \label{Eq:psi.product}
\end{align}
The $7$-dimensional Hodge star operator $\ast$ of $g_\varphi$ has the following properties acting on $\alpha \in \Omega^k(N)$:
\begin{align}
    \ast \alpha &= (-1)^k |\Upsilon|_{\omega} \rd r \wedge \ast_6 \alpha,\\
    \ast (\rd r \wedge \alpha) &= \frac{1}{|\Upsilon|_{\omega}} \ast_6 \alpha. 
\end{align}

Since both $\omega$ and $\Upsilon$ are closed, $\varphi$ is a coclosed $\rG_2$-structure. Moreover, Picard--Suan \cite{Picard2022flows} compute 
\begin{align}
    \rd \varphi &= -\frac{1}{|\Upsilon|_{\omega}} \rd (\log|\Upsilon|_{\omega}) \wedge \Re(\Upsilon)+|\Upsilon|_{\omega} \rd (\log|\Upsilon|_{\omega}) \wedge \rd r \wedge \omega, \label{Eq:d.varphi.product}\\
    \ast \rd \varphi &= \left( \nabla_{g_6} (\log|\Upsilon|_{\omega}) \right) \lrcorner \left( - \rd r \wedge \Im \Upsilon - \frac{1}{2} \omega^2 \right).\label{Eq:star.d.varphi.product}
\end{align}
From those formulae, one easily derives:
\begin{lemma}[\cite{Picard2022flows} {Lemmas 4.5 and 4.6}] \label{Lem:Torsion.product}
\label{Lem:Laplacian.psi.product}
    Let $\varphi$ be the $\rG_2$-structure defined by \eqref{Eq:varphi.product} on $M = N \times S^1$ with $N$ a Calabi--Yau $3$-manifold, then the torsion forms are given by
\begin{equation}
    \tau_0 = 0, \quad \tau_1 = 0, \quad \tau_2 = 0, \quad \tau_3 = \left( \nabla_{g_6} (\log |\Upsilon|_{\omega}) \right) \lrcorner \left( - \rd r \wedge \Im \Upsilon - \frac{1}{2} \omega^2 \right).
\end{equation}
    The Hodge Laplacian of the $4$-form is given by
\begin{equation}
    \Delta \psi=\cL_{\nabla (\log|\Upsilon|_{\omega})} \left( - \rd r \wedge \Im \Upsilon - \frac{1}{2} \omega^2 \right).
\end{equation}
\end{lemma}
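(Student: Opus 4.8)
The plan is to read the torsion forms straight off the Fern\'andez--Gray identities \eqref{Eq:Fernandez d}--\eqref{eq: Fernandez dpsi}, using the already-computed formulae \eqref{Eq:d.varphi.product}--\eqref{Eq:star.d.varphi.product}, and then to feed the resulting $\tau_3$ into the definition of the Hodge Laplacian. First I would record that $\varphi$ is coclosed: since $\omega$ and $\Upsilon$ are closed, a one-line computation on \eqref{Eq:psi.product} gives $\rd\psi=0$ (both $\rd(\rd r\wedge\Im\Upsilon)$ and $\rd(\omega^2)$ vanish). Substituting $\rd\psi=0$ into \eqref{eq: Fernandez dpsi} leaves $4\tau_1\wedge\psi+\tau_2\wedge\varphi=0$. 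The two summands sit in the distinct irreducible pieces $\Omega^5_7$ and $\Omega^5_{14}$ of (the Hodge-dual of) \eqref{eq:form.decomp}, and wedging with $\psi$, respectively $\varphi$, is injective on $\Omega^1$, respectively $\Omega^2_{14}$; hence $\tau_1=0$ and $\tau_2=0$. This is just the usual characterization of coclosed $\rG_2$-structures.

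With $\tau_1=0$, identity \eqref{Eq:Fernandez d} reads $\rd\varphi=\tau_0\psi+\ast\tau_3$. To isolate $\tau_0$ I would wedge with $\varphi$: since $\tau_3\in\Omega^3_{27}$ is orthogonal to $\varphi\in\Omega^3_1$, we have $\ast\tau_3\wedge\varphi=\langle\tau_3,\varphi\rangle\,\vol=0$, while $\psi\wedge\varphi$ is a nonzero multiple of $\vol$; so $\tau_0$ is proportional to $\rd\varphi\wedge\varphi$. Expanding $\rd\varphi\wedge\varphi$ from \eqref{Eq:d.varphi.product} and \eqref{Eq:varphi.product}, each resulting term either vanishes for trivial degree reasons (it contains $\rd r\wedge\rd r$, or is a $7$-form on the $6$-manifold $N$) or is a multiple of $\rd(\log|\Upsilon|_\omega)\wedge\Re\Upsilon\wedge\omega$, which is of type $(4,2)\oplus(2,4)$ on $N$ and hence zero. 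Thus $\rd\varphi\wedge\varphi=0$ and $\tau_0=0$. Now \eqref{Eq:Fernandez d} collapses to $\rd\varphi=\ast\tau_3$, and applying $\ast$ (with $\ast^2=\id$ in dimension $7$) gives $\tau_3=\ast\rd\varphi$, which is exactly \eqref{Eq:star.d.varphi.product}.

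For the Laplacian I would use that $\rd\psi=0$ forces $\Delta\psi=\rd\rd^{\ast}\psi$. Since $\ast\psi=\varphi$, the codifferential gives $\rd^{\ast}\psi=\ast\rd\ast\psi=\ast\rd\varphi=\tau_3=\nabla(\log|\Upsilon|_\omega)\lrcorner\Theta$, where I set $\Theta:=-\rd r\wedge\Im\Upsilon-\tfrac12\omega^2$ and $V:=\nabla(\log|\Upsilon|_\omega)$. Cartan's formula then yields $\Delta\psi=\rd(V\lrcorner\Theta)=\cL_V\Theta-V\lrcorner\rd\Theta$, and $\rd\Theta=0$ by the same closedness of $\omega$ and $\Im\Upsilon$ used above, so $\Delta\psi=\cL_V\Theta$, as claimed.

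The only genuinely delicate points are bookkeeping rather than conceptual. I expect the main obstacle to be keeping the sign conventions straight: I must check that $\rd^{\ast}=\ast\rd\ast$ carries a $+$ sign on a $4$-form in dimension $7$ (indeed $(-1)^{n(k+1)+1}=+1$ for $n=7$, $k=4$), since a stray sign here would flip $\Delta\psi$, and I must handle the signs in reordering $\rd r$, $\Re\Upsilon$, $\omega$ when verifying the type-vanishing of $\rd(\log|\Upsilon|_\omega)\wedge\Re\Upsilon\wedge\omega$. Everything else follows mechanically from \eqref{Eq:d.varphi.product}--\eqref{Eq:star.d.varphi.product} and the orthogonality of the decomposition \eqref{eq:form.decomp}.
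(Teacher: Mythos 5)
Your proposal is correct and follows essentially the same route the paper takes (it leaves this lemma to \cite{Picard2022flows} but carries out the identical argument for the contact Calabi--Yau analogues in Propositions \ref{Prop:Torsion.ccY} and \ref{Prop:Laplacian.psi.cCY}): coclosedness kills $\tau_1,\tau_2$, the identity $\tau_0=\tfrac17\ast(\varphi\wedge\rd\varphi)$ together with the bidegree vanishing of $\rd(\log|\Upsilon|_\omega)\wedge\Re\Upsilon\wedge\omega$ gives $\tau_0=0$, hence $\tau_3=\ast\rd\varphi$, and $\Delta\psi=\rd\ast\rd\varphi$ becomes a Lie derivative via Cartan's formula since $\rd\Theta=0$. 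Your sign checks for $\rd^\ast$ on $4$-forms in dimension $7$ are also the right places to be careful, so nothing is missing.
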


Applying Lemma \ref{Lem:Laplacian.psi.product} to the Laplacian coflow \eqref{eq: Laplacian.coflow} of the above $\rG_2$-structures yields the evolution equation
\begin{align}
    \frac{\partial}{\partial t} \left( - \rd r \wedge \Im \Upsilon + \frac{1}{2} \omega^2 \right) = \cL_{\nabla(\log|\Upsilon|_{\omega})} \left( - \rd r \wedge \Im \Upsilon - \frac{1}{2} \omega^2 \right). \nonumber
\end{align}
The terms involving $\omega$ and $\Upsilon$ can be considered separately. Noting time dependencies, one can consider Ans\"{a}tze of the form $(\omega_t,\Upsilon_t)$ on $N$, satisfiying  
\begin{align}
    \frac{\partial}{\partial t} \omega_t &= -\cL_{\nabla_t (\log|\Upsilon_t|_{\omega_t})} \omega_t, \\
    \frac{\partial}{\partial t} \Upsilon_t &= \cL_{\nabla_t (\log|\Upsilon_t|_{\omega_t})} \Upsilon_t. 
\end{align} 
On the other hand, using properties of K\"{a}hler manifolds, we see that
\begin{align} \label{Eq.Ricci.Lie}
    \cL_{\nabla(\log|\Upsilon|_{\omega})}\omega = 2i\partial\overline{\partial}(\log|\Upsilon|_{\omega})=\Ric(\omega,J),
\end{align}
which ultimately relates the Laplacian coflow to the K\"{a}hler--Ricci flow.

\begin{remark} \label{Rem:Compatibility}
    A priori, the structures $(\omega_t,\Upsilon_t)$ along the flow may not remain compatible and integrable for all time. However, the solution presented in  \cite{Picard2022flows} satisfy the required compatibility conditions, as they are obtained by pulling back compatible structures via diffeomorphisms. 
\end{remark}

We now consider the modified coflow in this setting. A similar treatment using ideas from \cite{Picard2020} and \cite{Picard2022flows} {by writing the flows on the base as a modified K\"{a}hler--Ricci flow} yields the following result.

\begin{theorem} \label{Thm:Modified.coflow.product}
    Let $N^3$ be a Calabi--Yau $3$-manifold with K\"{a}hler form $\omega$ and holomorphic $(3,0)$-form $\Upsilon$. Suppose we have a family of compatible $\SU(3)$-structures $(\omega_t,\Upsilon_t)$ satisfying the coupled differential equations
\begin{align}
    \frac{\partial}{\partial t} \omega_t 
    &= -\cL_{\nabla_t (\log|\Upsilon_t|_{\omega_t})} \omega_t + \beta_t, \label{Eq:omega.t.system.product} \\
    \frac{\partial}{\partial t} \Upsilon_t 
    &= \cL_{\nabla_t (\log|\Upsilon_t|_{\omega_t})} \Upsilon_t + \gamma_t. \label{Eq:Upsilon.t.system.product} 
    \end{align}
    where $\beta_t\in\Omega^2(N)$, $\gamma_t\in\Omega^3(N)$ with initial conditions $\omega_0 = \omega$, $\Upsilon_0 = \Upsilon$, and let $\{\varphi_t\}$ be the family $\rG_2$-structures given by
\begin{equation} \label{Eq:varphi.t.product}
    \varphi_t 
    = \Re \left( \frac{1}{|\Upsilon|_{\omega_t}} \Upsilon \right) + |\Upsilon|_{\omega_t} \rd r \wedge \omega_t
\end{equation}
    Then $\{\varphi_t\}$ is a solution of the modified Laplacian coflow \eqref{Eq:Modified.coflow} with constant $A$  if, and only if, 
\begin{align}
    \beta_t \wedge \omega_t 
    &= -A \frac{1}{|\Upsilon|_{\omega}} \rd (\log|\Upsilon_t|_{\omega_t}) \wedge \Re (\Upsilon_t), \label{Eq:beta.t.product} \\
    \Im (\gamma_t) 
    &= A|\Upsilon_t|_{\omega_t}  \rd (\log|\Upsilon_t|_{\omega_t}) \wedge \omega_t. \label{Eq:gamma.t.product}
\end{align}
\end{theorem}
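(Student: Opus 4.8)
The plan is to expand both sides of the modified Laplacian coflow \eqref{Eq:Modified.coflow} on the Ansatz \eqref{Eq:varphi.t.product} and match them. The decisive simplification comes from Lemma \ref{Lem:Torsion.product}: at each fixed time the structure $\varphi_t$ has vanishing scalar torsion $\tau_0 = 0$, so the de Turck term collapses to $\rd\big(A\varphi_t\big)$ and \eqref{Eq:Modified.coflow} reduces to
\[
\frac{\del}{\del t}\psi_t = \Delta_t\psi_t + A\,\rd\varphi_t .
\]
Throughout I write $V_t := \nabla_t\big(\log|\Upsilon_t|_{\omega_t}\big)$, a vector field tangent to $N$, and I use the dual form $\psi_t = -\rd r\wedge\Im\Upsilon_t + \tfrac12\omega_t^2$ obtained as in \eqref{Eq:psi.product} for the time-dependent pair, in which the norm factors of \eqref{Eq:varphi.t.product} cancel exactly under the Hodge star.

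First I would compute the left-hand side. Since $\rd r$ is $t$-independent, $\frac{\del}{\del t}\psi_t = -\rd r\wedge\Im(\partial_t\Upsilon_t) + \omega_t\wedge\partial_t\omega_t$. Substituting the evolution equations \eqref{Eq:omega.t.system.product}--\eqref{Eq:Upsilon.t.system.product} and using that $V_t$ is tangent to $N$ (so $\cL_{V_t}\rd r = 0$, and the Lie derivative is a derivation on wedge products) gives
\[
\frac{\del}{\del t}\psi_t = \cL_{V_t}\Big(-\rd r\wedge\Im\Upsilon_t - \tfrac12\omega_t^2\Big) + \omega_t\wedge\beta_t - \rd r\wedge\Im\gamma_t .
\]
The key observation is that the first term is precisely $\Delta_t\psi_t$ by Lemma \ref{Lem:Laplacian.psi.product}; note in particular the sign flip between the $+\tfrac12\omega_t^2$ in $\psi_t$ and the $-\tfrac12\omega_t^2$ produced by $\omega_t\wedge\partial_t\omega_t = -\tfrac12\cL_{V_t}\omega_t^2 + \omega_t\wedge\beta_t$, which is exactly what matches the Laplacian.

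I would then cancel $\Delta_t\psi_t$ from both sides, reducing the coflow to the single identity $\omega_t\wedge\beta_t - \rd r\wedge\Im\gamma_t = A\,\rd\varphi_t$. Substituting the expression \eqref{Eq:d.varphi.product} for $\rd\varphi_t$ and decomposing according to whether or not a term contains the factor $\rd r$ — a legitimate splitting because a $4$-form on $N\times S^1$ decomposes uniquely into an $N$-form plus $\rd r$ wedged with an $N$-form, and $\mu\mapsto \rd r\wedge\mu$ is injective — separates it into the two stated constraints: the $\rd r$-free part yields \eqref{Eq:beta.t.product}, while the $\rd r$-part yields \eqref{Eq:gamma.t.product} after anticommuting $\rd r$ past $\rd(\log|\Upsilon_t|_{\omega_t})$ and cancelling the common factor $\rd r$.

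The computation is otherwise routine; the points demanding care are (i) the standing hypothesis, recalled in Remark \ref{Rem:Compatibility}, that $(\omega_t,\Upsilon_t)$ remain compatible and integrable $\SU(3)$-structures, which is what licenses applying Lemma \ref{Lem:Torsion.product} — in particular $\tau_0 = 0$ and the closed-form expression for $\Delta_t\psi_t$ — at every time rather than only at $t=0$; and (ii) recognizing that $\psi_t$ depends on the correction terms only through $\omega_t\wedge\beta_t$ and $\Im\gamma_t$, which is precisely why the characterization constrains only these projections of $\beta_t$ and $\gamma_t$ rather than the full forms. The equivalence is then immediate in both directions, since every step above is reversible.
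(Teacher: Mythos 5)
Your proposal is correct and follows essentially the same route as the paper's proof: compute $\partial_t\psi_t$, identify the Lie-derivative part with $\Delta_t\psi_t$ via Lemma \ref{Lem:Laplacian.psi.product}, use $(\tau_0)_t=0$ to reduce the de Turck term to $A\,\rd\varphi_t$, and then split the residual equation into its $\rd r$-free and $\rd r$-components (the paper contracts with $\partial_r$, which is the same operation). The extra remarks on compatibility along the flow and on reversibility of the steps are consistent with Remark \ref{Rem:Compatibility} and do not change the argument.
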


\begin{proof}
    The family of $\rG_2$-structures defined by \eqref{Eq:varphi.t.product} has dual $4$-forms $\psi_t$ given by \eqref{Eq:psi.product}. Since the radial coordinate $r$ on $S^1$ does not depend on $t$, its evolution equation is
\begin{align} 
    \frac{\partial}{\partial t} \psi_t 
    &= \frac{\partial}{\partial t} \left( - \rd r \wedge \Im \Upsilon_t + \frac{1}{2} \omega_t^2 \right)
    = - \rd r \wedge \left( \frac{\partial}{\partial t} \Im \Upsilon_t \right) + \frac{1}{2} \left( \frac{\partial}{\partial t} \omega_t^2 \right) \nonumber \\
    &{= \cL_{\nabla_t (\log|\Upsilon_t|_{\omega_t})} \left( - \rd r \wedge \Im \Upsilon_t - \frac{1}{2} \omega_t^2 \right) - \rd r \wedge \Im(\gamma_t) + \beta_t \wedge \omega_t,} \label{Eq:Laplace.product.1}
\end{align}
    {where we have used \eqref{Eq:omega.t.system.product} and \eqref{Eq:Upsilon.t.system.product}.}
    
    Next, applying Lemma \ref{Lem:Laplacian.psi.product}, \eqref{Eq:d.varphi.product} and the fact {that $(\tau_0)_t = 0$ along the modified Laplacian coflow \eqref{Eq:Modified.coflow} under our Ansatz}, we obtain
    \begin{align} \label{Eq:Laplace.product.2}
    \begin{split}
            \frac{\partial}{\partial t} \left( - \rd r \wedge \Im \Upsilon_t + \frac{1}{2} \omega_t^2 \right) &= \cL_{\nabla_t (\log|\Upsilon_t|_{\omega_t})} \left( - \rd r \wedge \Im \Upsilon_t - \frac{1}{2} \omega_t^2 \right) \\
            &\mathcolor{blue}{-A \frac{1}{|\Upsilon_t|_{\omega_t}} \rd (\log|\Upsilon_t|_{\omega_t}) \wedge \Re (\Upsilon_t) + A|\Upsilon_t|_{\omega_t} \rd (\log|\Upsilon_t|_{\omega_t}) \wedge \rd r \wedge \omega_t},
        \end{split}
    \end{align}
    where the terms in \textcolor{blue}{blue} correspond to the additional term stemming from the {DeTurck} modification. 
    
    {Comparing \eqref{Eq:Laplace.product.1} and \eqref{Eq:Laplace.product.2},} we get
    \begin{align*}
        &- \rd r \wedge \Im (\gamma_t) + \beta_t \wedge \omega_t = \mathcolor{blue}{-A\frac{1}{|\Upsilon_t|_{\omega_t}} \rd (\log|\Upsilon_t|_{\omega_t}) \wedge \Re (\Upsilon_t) + A|\Upsilon_t|_{\omega_t} \rd (\log|\Upsilon_t|_{\omega_t}) \wedge dr \wedge \omega_t}.
    \end{align*}
    Since the radial coordinate is independent of $t$, we can contract by $\partial_r$, which yields  \eqref{Eq:gamma.t.product}. Using \eqref{Eq:gamma.t.product} and the above equation, we get \eqref{Eq:beta.t.product}. 
\end{proof}

As a corollary, we obtain restrictions on the forms $\beta_t$ and $\gamma_t$, assuming that the complex structure $J$ is to stay fixed along the flow.

\begin{cor} \label{cor:Modified.coflow.product.fixed.J}
    Let $\{\varphi_t\}$ of the form \eqref{Eq:varphi.t.product} be a solution to the modified Laplacian coflow,  such that the associated $\SU(3)$-structures satisfy \eqref{Eq:omega.t.system.product} and \eqref{Eq:Upsilon.t.system.product}. If the complex structure $J$ on $N$ remains fixed along the flow, then
\begin{align}
    \beta_t^{(1,1)} &= 0, \\
    \Im (\gamma_t)^{(0,3) \oplus (3,0)} &= 0.
\end{align}
\end{cor}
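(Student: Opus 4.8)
The plan is to feed the two constraints \eqref{Eq:beta.t.product} and \eqref{Eq:gamma.t.product} provided by Theorem \ref{Thm:Modified.coflow.product} into the bidegree decomposition determined by the fixed complex structure $J$, and to read off the two vanishing statements type by type. The point of the hypothesis is precisely that fixing $J$ freezes the $(p,q)$-types of the $\SU(3)$-data along the flow: for every $t$ we have $\omega_t\in\Omega^{1,1}(N)$ and $\Upsilon_t\in\Omega^{3,0}(N)$, so that $\Re\Upsilon_t\in\Omega^{3,0}\oplus\Omega^{0,3}$, while the real function $f_t:=\log|\Upsilon_t|_{\omega_t}$ splits as $\rd f_t=\partial f_t+\bar\partial f_t$ with $\partial f_t\in\Omega^{1,0}$ and $\bar\partial f_t\in\Omega^{0,1}$. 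This is what licenses the decomposition throughout, and I would state it explicitly at the outset.

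First I would dispatch the easier identity \eqref{Eq:gamma.t.product}. Wedging $\rd f_t$ with the $(1,1)$-form $\omega_t$ produces only the types $(2,1)$, from $\partial f_t\wedge\omega_t$, and $(1,2)$, from $\bar\partial f_t\wedge\omega_t$. Hence the right-hand side $A|\Upsilon_t|_{\omega_t}\,\rd f_t\wedge\omega_t$ has no component in $\Omega^{3,0}\oplus\Omega^{0,3}$, and projecting \eqref{Eq:gamma.t.product} onto that subspace gives $\Im(\gamma_t)^{(3,0)\oplus(0,3)}=0$ immediately.

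Next I would analyse \eqref{Eq:beta.t.product}. On the right-hand side, $\rd f_t\wedge\Re\Upsilon_t$ is a $4$-form whose only potentially nonzero wedges are $\bar\partial f_t\wedge\Upsilon_t$, of type $(3,1)$, and $\partial f_t\wedge\overline{\Upsilon_t}$, of type $(1,3)$; the remaining terms $\partial f_t\wedge\Upsilon_t$ and $\bar\partial f_t\wedge\overline{\Upsilon_t}$ have types $(4,0)$ and $(0,4)$ and therefore vanish since $\dim_\C N=3$. Thus the right-hand side is purely of type $(3,1)\oplus(1,3)$ and carries no $(2,2)$-part. On the left, writing $\beta_t=\beta_t^{(2,0)}+\beta_t^{(1,1)}+\beta_t^{(0,2)}$ and wedging with the $(1,1)$-form $\omega_t$, the $(2,2)$-component of $\beta_t\wedge\omega_t$ is exactly $\beta_t^{(1,1)}\wedge\omega_t$. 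Matching the $(2,2)$-parts of \eqref{Eq:beta.t.product} therefore yields $\beta_t^{(1,1)}\wedge\omega_t=0$.

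The one genuinely non-formal step, which I expect to be the crux, is the passage from $\beta_t^{(1,1)}\wedge\omega_t=0$ to $\beta_t^{(1,1)}=0$; this is where $\dim_\C N=3$ is essential. I would invoke the pointwise hard Lefschetz isomorphism: on a complex $3$-fold the operator $L=\,\cdot\wedge\omega_t\colon\Omega^{1,1}\to\Omega^{2,2}$ is a bijection, both fibres having dimension $9$ and $L^{n-k}$ being an isomorphism on $k$-forms for $k=2=n-1$. Consequently $\beta_t^{(1,1)}\wedge\omega_t=0$ forces $\beta_t^{(1,1)}=0$, completing the proof. The remainder is pure bookkeeping of $(p,q)$-types, so the only care needed is to confirm that the fixed complex structure indeed keeps $\omega_t$ of type $(1,1)$ and $\Upsilon_t$ of type $(3,0)$.
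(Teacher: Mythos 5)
Your proof is correct and follows essentially the same route as the paper: project the constraints \eqref{Eq:beta.t.product} and \eqref{Eq:gamma.t.product} from Theorem \ref{Thm:Modified.coflow.product} onto $(p,q)$-types, observe that their right-hand sides lie in $\Omega^{3,1}\oplus\Omega^{1,3}$ and $\Omega^{2,1}\oplus\Omega^{1,2}$ respectively, and conclude. The only difference is that you spell out the pointwise Lefschetz injectivity of $\,\cdot\wedge\omega_t\colon\Omega^{1,1}\to\Omega^{2,2}$ needed to pass from $\beta_t^{(1,1)}\wedge\omega_t=0$ to $\beta_t^{(1,1)}=0$, a step the paper leaves implicit.
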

\begin{proof}
Since $\{\varphi_t\}$ is a solution, we must have that $\omega_t$ and $\Upsilon_t$ satisfy \eqref{Eq:omega.t.system.product} and \eqref{Eq:Upsilon.t.system.product}. If the complex structure $J$ is fixed, we must have $\frac{\partial}{\partial t} \omega_t\in \Omega^{1,1}$. We see that the RHS of \eqref{Eq:omega.t.system.product} has bidegree $(1,3) \oplus (3,1)$. Since $\omega_t \in \Omega^{1,1}$ it follows that the $(1,1)$-part of $\beta_t$ must vanish. A similar analysis shows that the $(3,0) \oplus (0,3)$-part of $\Im (\gamma_t)$ must also vanish.
\end{proof}

\section{Flows on contact Calabi--Yau 7-Manifolds} \label{Sect:cCY}

We now extend the ideas of \cite{Picard2022flows} to contact Calabi--Yau (cCY) manifolds, and investigate both the Laplacian coflow and the modified coflow on those spaces. We employ the approach of Tomassini--Vezzoni \cite{Tomassini2008} and Habib--Vezzoni \cite{Habib2015} for the geometry of Sasakian manifolds satisfying $\Hol(\nabla)\subseteq \SU(n)$ in $\rG_2$-geometry; see also \cite{Calvo-Andrade2020}.

\begin{definition} \label{Def:cCY}
 	A \emph{contact Calabi--Yau} ($cCY^7$) $7$-manifold is a quadruple   $(M^7,\eta,\Phi,\Upsilon)$ such that
    \begin{itemize}
	   \item $(M,\xi,\eta,\Phi,g)$ is a $7$-dimensional Sasakian manifold with Reeb vector field $\xi$ and contact form $\eta$ and vanishing first basic Chern class $c_1^B(M) = 0$, see Appendix \ref{Ap.Sasakian};
	
	   \item $\Upsilon$ is a nowhere vanishing transverse form on $\cD = \ker         \eta$ of type $(3,0)$, with
	   $$\frac{\omega^3}{3!} = \vol_{\cD} = \frac{i}{8} \frac{1}{|\Upsilon|_\omega^2} \Upsilon \wedge \oep,\quad \rd \Upsilon=0, $$
	   where $\omega = \rd \eta$.
    We also define
	   $$ \Re \Upsilon := \frac{\Upsilon+\oep}{2}, \quad \Im \Upsilon :=             \frac{\Upsilon-\oep}{2i}. $$  	
	\end{itemize}

 We refer to $(\omega, \Upsilon)$ as a transverse $\SU(3)$-structure and the norm $|\Upsilon|_\omega$ is constant when $\omega$ is transverse Ricci-flat.
\end{definition}.

\begin{remark} \label{Rem:Sasakian}
	A contact Calabi--Yau manifold $(M,g,\eta,\Upsilon)$ has transverse Calabi--Yau geometry on the distribution $\cD = \ker \eta$, in the sense of foliations, given by $g|_{\cD}$, $\omega$ and $\Upsilon$. When the Sasakian structure is regular or quasi-regular, $M$ is an $S^1$-(orbi)bundle over a Calabi--Yau orbifold $\cZ = M /\cF_\xi$ where $\cF_\xi$ is the foliation obtained from the Reeb vector field $\xi$.
    The Sasakian geometry can also be irregular, and in this case there is no $S^1$-fibration structure on $M$ compatible with the contact Calabi--Yau geometry.
\end{remark}

\subsection{Preliminaries on \texorpdfstring{$cCY^7$}{cCY7}}

We recall how to relate the cCY geometry in $7$ dimensions to $\rG_2$-geometry, cf.~\cite{Habib2015}*{Corollary 6.8} and \cite{lotay2021}. 
\begin{prop} \label{Prop:G2.cCY}
	Let $(M^7,\eta,\Phi,\Upsilon)$ be a contact Calabi--Yau 7-manifold {with Reeb vector field $\xi$}. Then $M$ carries a 1-parameter of coclosed $\rG_2$-structures defined by
    \begin{align} \label{Eq:psi.ccY.1.parameter}
	   \varphi = \Re \Upsilon + \epsilon \eta \wedge \omega,
    \end{align}  
	for $\epsilon > 0$, where $\omega = \rd \eta$ is transverse Ricci-flat. Furthermore, $\rd \varphi = \epsilon \omega^2$ and $\varphi$ is coclosed, ie.  $\rd \psi = 0$. 
\end{prop}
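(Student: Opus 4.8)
The plan is to read off the induced metric and Hodge dual of $\varphi$ from the standard $\SU(3)\hookrightarrow\rG_2$ dictionary, and then to reduce both assertions to the transverse structure equations $\rd\eta=\omega$, $\rd\Upsilon=0$, and the type constraint $\omega\wedge\Upsilon=0$.

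First I would observe that $\Re\Upsilon$ is a horizontal (basic) $3$-form on $\cD=\ker\eta$, while $\epsilon\,\eta\wedge\omega$ is its ``mixed'' counterpart, so that $\varphi$ is exactly the positive $3$-form obtained by adjoining the transverse $\SU(3)$-structure $(\omega,\Upsilon)$ to the Reeb direction, rescaled by $\epsilon$; this is what certifies that $\varphi$ is a genuine $\rG_2$-structure. Because $\omega$ is transverse Ricci-flat, the norm $|\Upsilon|_\omega$ is constant, so (after the harmless rescaling to $|\Upsilon|_\omega=1$) the metric determined by $\varphi$ is the orthogonal sum $g_\varphi=g_\cD+\epsilon^2\,\eta\otimes\eta$, with $\cD\perp\xi$ and volume form $\vol_\varphi=\epsilon\,\eta\wedge\vol_\cD=\epsilon\,\eta\wedge\tfrac{\omega^3}{6}$. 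Using the transverse Hodge identities $\ast_\cD\Re\Upsilon=\Im\Upsilon$ and $\ast_\cD\omega=\tfrac12\omega^2$, together with the $7$-dimensional rules relating $\ast$ on horizontal forms and on $\epsilon\,\eta\wedge(\text{horizontal})$ to $\ast_\cD$, I would compute $\psi=\ast\varphi=\tfrac12\omega^2-\epsilon\,\eta\wedge\Im\Upsilon$. The only feature I need to extract is that $\psi$ is a \emph{constant}-coefficient combination of $\omega^2$ and $\eta\wedge\Im\Upsilon$.

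The computation of $\rd\varphi$ is then immediate and metric-free: since $\rd\Upsilon=0$ we have $\rd\Re\Upsilon=0$, while $\rd(\eta\wedge\omega)=\rd\eta\wedge\omega-\eta\wedge\rd\omega=\omega^2$ using $\rd\omega=\rd\rd\eta=0$; hence $\rd\varphi=\epsilon\,\omega^2$. For coclosedness I would differentiate the two pieces of $\psi$ separately: $\rd(\omega^2)=2\,\omega\wedge\rd\omega=0$, and $\rd(\eta\wedge\Im\Upsilon)=\rd\eta\wedge\Im\Upsilon-\eta\wedge\rd\Im\Upsilon=\omega\wedge\Im\Upsilon$, where $\eta\wedge\rd\Im\Upsilon=0$ because $\rd\Upsilon=0$. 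The remaining term vanishes by type: $\omega$ is of type $(1,1)$ and $\Upsilon$ of type $(3,0)$ on the rank-$6$ transverse bundle, forcing $\omega\wedge\Upsilon=0$ and hence $\omega\wedge\Im\Upsilon=0$. Therefore $\rd\psi=0$, so $\varphi$ is coclosed.

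The only genuinely delicate step is the middle one, namely pinning down $g_\varphi$ and the dual form $\psi=\ast\varphi$ from the $\SU(3)$-data; here the essential input is the constancy of $|\Upsilon|_\omega$ guaranteed by transverse Ricci-flatness, which is precisely what makes the coefficients of $\psi$ constant and so lets the closedness of $\psi$ follow purely from the structure equations. To avoid re-deriving the $\rG_2$ linear algebra from scratch, I would invoke the Habib--Vezzoni dictionary and the entirely analogous product computation of \S\ref{Sect:Product} to justify the precise expressions for $g_\varphi$ and $\psi$.
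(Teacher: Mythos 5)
Your proof is correct and follows the same route the paper (and its cited source, Habib--Vezzoni) takes: the paper states this proposition without proof, but the identical computations appear later for the scaled family in Lemma \ref{Lem:torsion.ccY.LSES} and in the derivation of \eqref{Eq:d.varphi.cCY}--\eqref{Eq:psi.cCY}, namely $\rd\varphi=\epsilon\,\rd\eta\wedge\omega=\epsilon\omega^2$, $\psi=\tfrac12\omega^2-\epsilon\,\eta\wedge\Im\Upsilon$, and $\rd\psi=-\epsilon\,\omega\wedge\Im\Upsilon=0$ by the type argument. Your identification of the constancy of $|\Upsilon|_\omega$ (from transverse Ricci-flatness) as the input that makes the coefficients of $\psi$ constant is exactly the right point to flag.
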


The metric $g$ and the transverse symplectic form $\omega = \rd \eta$ on $(M,\eta,\Phi,\Upsilon)$ can be written locally as 
\begin{align}
    g = \eta^2 + g_{p\overline{q}} \rd z^p \rd \overline{z}^{q}, \qquad \rd \eta &= 2i g_{p\overline{q}} \rd z^p \wedge \rd \overline{z}^q,\qquad \Upsilon = u \rd z^1 \wedge \rd z^2 \wedge \rd z^3,    
\end{align}
where the $g_{p\overline{q}}$ and $u$ are all basic functions{, that is $\cL_\xi g_{p\overline{q}} = \cL_\xi u = 0$}. Moreover, we obtain a basic function defined by  
\begin{align}
    |\Upsilon|^2_{\omega} = \frac{|u|^2}{\det (g)_{p\overline{q}}}.
\end{align}

We obtain a coclosed $\rG_2$-structure given by 
\begin{equation} \label{Eq:varphi.cCY}
    \varphi = \Re \left( \frac{1}{|\Upsilon|_{\omega}} \Upsilon \right) + |\Upsilon|_{\omega} \eta \wedge \omega.
\end{equation}
In this case, the associated metric on $M$ is 
\begin{equation} \label{Eq:g.cCY}
    g = |\Upsilon|_{\omega}^2 \eta^2 + g|_{\cD},
\end{equation}
the volume form is
\begin{equation} \label{Eq:vol.cCY}
    \vol = |\Upsilon|_{\omega} \eta \wedge \vol|_{\cD},
    \qwithq 
    \vol|_{\cD} = \frac{\omega^3}{3!},
\end{equation}
and the dual $4$-form $\psi$ is 
\begin{equation} 
\label{Eq:psi.cCY}
    \psi = - \eta \wedge \Im \Upsilon + \frac{1}{2} \omega^2.
\end{equation}
We recall that $\omega$ and $\Upsilon$ are closed, and the contact form $\eta$ satisfies $d\eta = \omega$. It follows that $\psi$ is closed, ie. $\varphi$ is coclosed.
The Reeb vector field $\xi$ generates a $1$-dimensional foliation $\cF_{\xi}$, whose orientation induces a basic Hodge operator 
\begin{equation}
    \ast_B : \Lambda^k_B(M) \rightarrow \Lambda^{6-k}_B(M)
\end{equation}
in the usual way. Then, for $\alpha \in \Omega_B^k(M)$,
\begin{equation}
\label{Eq:Basic.Hodge.star}
     (\ast_B)^2 \alpha 
     = (-1)^k \alpha, \quad \ast_B \Re (\Upsilon) = \Im \Upsilon, \quad \ast_B \omega = \frac{1}{2} \omega^2,
\end{equation} 
This relates to the standard Hodge operator of the $7$-dimensional metric \eqref{Eq:g.cCY}, acting on $\alpha\in \Omega_B^k(M)\hookrightarrow \Omega^k(M)$, by
\begin{align}
    \ast \alpha 
    &= (-1)^k |\Upsilon|_{\omega} \eta \wedge \ast_B \alpha, \label{Eq:Basic.Hodge.star.2} \\
    \ast (\eta \wedge \alpha) 
    &= \frac{1}{|\Upsilon|_{\omega}} \ast_B \alpha \label{Eq:Basic.Hodge.star.3}
\end{align}

We compute the torsion forms of the $\rG_2$-structure \eqref{Eq:varphi.cCY}, distinguishing in \textcolor{red}{red}  terms that arise from the non-trivial topology of the $cCY^7$, compared to the product $CY^3\times S^1$.

\begin{prop} 
\label{Prop:Torsion.ccY}
    Let $(M^7, \eta, \Phi, \Upsilon)$ be a contact Calabi--Yau $7$-manifold, with $\rG_2$-structure $\varphi$   defined by \eqref{Eq:varphi.cCY}. Then the torsion forms of $\varphi$ are given by
\begin{equation}
    \tau_0 = \mathcolor{red}{\frac{6}{7} |\Upsilon|_{\omega}}, 
    \quad \tau_1 = 0, 
    \quad \tau_2 = 0, 
\end{equation}
    and 
\begin{equation}
    \tau_3 = (\nabla \log|\Upsilon|_{\omega}) \lrcorner \left (-\eta \wedge \Im \Upsilon +\frac{1}{2} \omega^2 \right)  \mathcolor{red}{-\frac{6}{7} \Re \Upsilon + \frac{8}{7} |\Upsilon|^2_{\omega} \eta \wedge \omega.} \label{Eq:tau_3.ccY}
\end{equation}
\end{prop}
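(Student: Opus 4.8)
The plan is to read the torsion forms directly off the Fernández--Gray equations \eqref{Eq:Fernandez d}--\eqref{eq: Fernandez dpsi}, which requires only computing $\rd\varphi$ (and knowing that $\rd\psi = 0$) from the structure equations $\rd\Upsilon = 0$, $\rd\eta = \omega$ and $\rd\omega = 0$. I would begin with $\tau_1$ and $\tau_2$: since $\varphi$ is coclosed by Proposition \ref{Prop:G2.cCY}, the left-hand side of \eqref{eq: Fernandez dpsi} vanishes, giving $4\tau_1\wedge\psi + \tau_2\wedge\varphi = 0$. The maps $\tau_1\mapsto\tau_1\wedge\psi$ and $\tau_2\mapsto\tau_2\wedge\varphi$ are injective with images in the distinct irreducible summands $\Omega^5_7$ and $\Omega^5_{14}$, so each term vanishes separately and $\tau_1 = \tau_2 = 0$.

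Next I compute $\rd\varphi$ for $\varphi$ as in \eqref{Eq:varphi.cCY}. Writing $f := |\Upsilon|_\omega$ and using $\rd\Re\Upsilon = 0$, $\rd f = f\,\rd(\log f)$ and $\rd(\eta\wedge\omega) = \rd\eta\wedge\omega - \eta\wedge\rd\omega = \omega^2$, the Leibniz rule yields
\[
\rd\varphi = -\tfrac{1}{f}\,\rd(\log f)\wedge\Re\Upsilon + f\,\rd(\log f)\wedge\eta\wedge\omega + f\,\omega^2 .
\]
The first two terms are the exact analogues of the product computation (with $\rd r$ replaced by $\eta$), while $f\,\omega^2$ is the genuinely new, topological term stemming from $\rd\eta = \omega \neq 0$; this is the seed of the contributions marked in red. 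With $\tau_1 = 0$, \eqref{Eq:Fernandez d} reads $\rd\varphi = \tau_0\psi + \ast\tau_3$, so I extract $\tau_0$ by wedging with $\varphi$: as $\ast\tau_3\in\Omega^4_{27}$ we have $\ast\tau_3\wedge\varphi = 0$, and $\psi\wedge\varphi = 7\vol$, hence $7\tau_0\,\vol = \rd\varphi\wedge\varphi$. Every cross term vanishes because $\omega\wedge\Re\Upsilon = 0$ in three complex dimensions and $\Re\Upsilon\wedge\Re\Upsilon = 0 = \eta\wedge\eta$; only $f\,\omega^2\wedge f\,\eta\wedge\omega = f^2\,\eta\wedge\omega^3 = 6f\,\vol$ survives, giving $\tau_0 = \tfrac67 f = \tfrac67|\Upsilon|_\omega$.

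For $\tau_3$ I apply $\ast$ to \eqref{Eq:Fernandez d} and use $\ast\psi = \varphi$ and $\ast\ast = \id$ on $3$-forms to obtain $\tau_3 = \ast\,\rd\varphi - \tau_0\varphi$. Computing $\ast\,\rd\varphi$ term by term via \eqref{Eq:Basic.Hodge.star.2}--\eqref{Eq:Basic.Hodge.star.3}, the basic relations $\ast_B\Im\Upsilon = -\Re\Upsilon$ and $\ast_B\omega^2 = 2\omega$ from \eqref{Eq:Basic.Hodge.star}, and the contraction identity $\ast(a\wedge\beta) = (-1)^{\deg\beta}\,a^\sharp\lrcorner\ast\beta$, the two product-type terms reassemble into the interior-product expression of \eqref{Eq:tau_3.ccY}, exactly as in the analogous product computation behind Lemma \ref{Lem:Torsion.product} (here $\nabla\log|\Upsilon|_\omega$ is transverse, since $\log|\Upsilon|_\omega$ is basic, so it annihilates $\eta$), while the topological term gives $\ast(f\,\omega^2) = 2f^2\,\eta\wedge\omega$. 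Subtracting $\tau_0\varphi = \tfrac67\Re\Upsilon + \tfrac67 f^2\,\eta\wedge\omega$ then produces the two red terms $-\tfrac67\Re\Upsilon + (2-\tfrac67)f^2\,\eta\wedge\omega = -\tfrac67\Re\Upsilon + \tfrac87|\Upsilon|^2_\omega\,\eta\wedge\omega$ of \eqref{Eq:tau_3.ccY}.

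The main obstacle is the sign-and-coefficient bookkeeping in this last step: correctly applying the seven-dimensional Hodge star to mixed forms of type $\eta\wedge(\text{basic})$ and to purely basic forms, and tracking how the subtraction of $\tau_0\varphi$ combines with the new term $f\,\omega^2$ to yield precisely the coefficients $\tfrac67$ and $\tfrac87$. Once the contraction identity and the basic Hodge relations \eqref{Eq:Basic.Hodge.star} are in place the computation is routine; the only point genuinely requiring care is the transversality of $\nabla\log|\Upsilon|_\omega$, which is what lets the two product-type terms collapse into a single interior product and also forces the cross terms in the $\tau_0$ computation to drop out.
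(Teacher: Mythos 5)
Your proposal is correct and follows essentially the same route as the paper's own proof: $\tau_1=\tau_2=0$ from coclosedness, $\tau_0=\tfrac{1}{7}\ast(\varphi\wedge\rd\varphi)=\tfrac{6}{7}|\Upsilon|_\omega$ from the single surviving term $|\Upsilon|_\omega^2\,\eta\wedge\omega^3$, and $\tau_3=\ast\rd\varphi-\tau_0\varphi$ computed term by term via the basic Hodge star relations and the contraction identity. One small point: your computation (like the paper's displayed proof) produces $-\tfrac{1}{2}\omega^2$ inside the interior product, so the $+\tfrac{1}{2}\omega^2$ appearing in the statement \eqref{Eq:tau_3.ccY} is a sign typo that you should not import by citing that equation verbatim as the target of your reassembly step.
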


\begin{proof}
    Since $\varphi$ is coclosed, we have $\tau_1=0$ and $\tau_2=0$. We now compute $\tau_0$, as follows.
    \begin{align}
        \rd \varphi 
        &= \rd \left( \Re \left( \frac{1}{|\Upsilon|_{\omega}} \Upsilon \right) + |\Upsilon|_{\omega} \eta \wedge \omega \right) \nonumber \\
        &= -\frac{1}{|\Upsilon|_{\omega}} \rd (\log|\Upsilon|_{\omega}) \wedge \Re (\Upsilon) + |\Upsilon|_{\omega} \rd (\log|\Upsilon|_{\omega}) \wedge \eta \wedge \omega \mathcolor{red}{+ |\Upsilon|_{\omega} \omega^2}. \label{Eq:d.varphi.cCY}
    \end{align}
    Taking the Hodge star of both sides we obtain
    \begin{align}
        \ast \rd \varphi 
        &=-\frac{1}{|\Upsilon|_{\omega}} \ast \left( \log|\Upsilon|_{\omega} \wedge \Re (\Upsilon) \right) + |\Upsilon|_{\omega} \ast \left( \rd (\log|\Upsilon|_{\omega}) \wedge \eta \wedge \omega \right) \mathcolor{red}{+ |\Upsilon|_{\omega} \ast \omega^2} \nonumber \\
        &= \frac{1}{|\Upsilon|_{\omega}} (\rd \log|\Upsilon|_{\omega})^{\sharp} \lrcorner \ast (\Re (\Upsilon)) - |\Upsilon|_{\omega} (\rd \log|\Upsilon|_{\omega})^{\sharp} \lrcorner \ast (\eta \wedge \omega) \mathcolor{red}{+ |\Upsilon|_{\omega} \ast \omega^2} \nonumber \\
        &= (\nabla \log|\Upsilon|_{\omega}) \lrcorner \left( -\eta \wedge \Im \Upsilon - \frac{1}{2} \omega^2 \right) \mathcolor{red}{+ 2|\Upsilon|_{\omega}^2 \eta \wedge \omega.} \label{Eq:star.d.varphi.cCY}
    \end{align}
    Using  \eqref{Eq:d.varphi.cCY}, we find 
    \begin{align*}
        \tau_0 &= \frac{1}{7} \ast (\varphi \wedge d\varphi) = \mathcolor{red}{\frac{6}{7} \ast \left( |\Upsilon|^2_{\omega} \eta \wedge \frac{\omega^3}{3!} \right) = \frac{6}{7} |\Upsilon|_{\omega}.}
    \end{align*}
    Finally, we compute $\tau_3$, from \eqref{Eq:star.d.varphi.cCY}:
    \begin{align}
        \tau_3 
        &=\ast \rd \varphi - \tau_0 \varphi \nonumber \\
        &= (\nabla \log|\Upsilon|_{\omega}) \lrcorner \left (-\eta \wedge \Im \Upsilon - \frac{1}{2} \omega^2 \right) \mathcolor{red}{+ 2|\Upsilon|_{\omega}^2 \eta \wedge \omega - \frac{6}{7} |\Upsilon|_{\omega} \left( \Re \frac{1}{|\Upsilon|_{\omega}} \Upsilon \right) - \frac{6}{7} |\Upsilon|^2_{\omega} \eta \wedge \omega} \nonumber \\
        &= (\nabla \log|\Upsilon|_{\omega}) \lrcorner \left( -\eta \wedge \Im \Upsilon - \frac{1}{2} \omega^2 \right) \mathcolor{red}{- \frac{6}{7} \Re \Upsilon + \frac{8}{7} |\Upsilon|^2_{\omega} \eta \wedge \omega.} \nonumber \qedhere   
    \end{align}
\end{proof}

\begin{prop} \label{Prop:Laplacian.psi.cCY}
    Let $(M^7, \eta, \Phi, \Upsilon)$ be a contact Calabi--Yau $7$-manifold, with $\rG_2$-structure $\varphi$   defined by \eqref{Eq:varphi.cCY}.
    Then the Hodge Laplacian of $\psi = \ast \varphi$ is
    \begin{equation} \label{Eq:Laplace.psi.cCY}
        \Delta \psi = \cL_{\nabla (\log|\Upsilon|_{\omega})} \left( -\eta \wedge \Im \Upsilon - \frac{1}{2} \omega^2 \right) \mathcolor{red}{+ 4|\Upsilon|_{\omega}^2 \rd \left( \log|\Upsilon|_{\omega} \right) \eta \wedge \omega + 2|\Upsilon|_{\omega}^2 \omega^2.}
    \end{equation} 
\end{prop}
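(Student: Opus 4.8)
The plan is to exploit coclosedness to kill one half of the Hodge Laplacian and then reduce everything to the single computation of $\ast\rd\varphi$ already recorded in \eqref{Eq:star.d.varphi.cCY}. Since Proposition~\ref{Prop:G2.cCY} gives $\rd\psi=0$, the term $\rd^{\ast}\rd\psi$ vanishes and $\Delta\psi=\rd\rd^{\ast}\psi$. On a $7$-manifold the codifferential on $4$-forms is $\rd^{\ast}=\ast\rd\ast$ (the sign $(-1)^{n(k+1)+1}$ with $n=7,k=4$ is $+1$), and because $\psi=\ast\varphi$ with $\ast\ast=\id$ on $3$-forms in dimension $7$ we have $\ast\psi=\varphi$; hence $\rd^{\ast}\psi=\ast\rd\ast\psi=\ast\rd\varphi$. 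Thus it remains only to apply $\rd$ to the right-hand side of \eqref{Eq:star.d.varphi.cCY}, namely to $(\nabla\log|\Upsilon|_{\omega})\lrcorner\big(-\eta\wedge\Im\Upsilon-\tfrac12\omega^2\big)+2|\Upsilon|^2_{\omega}\,\eta\wedge\omega$.

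For the first summand I would set $X:=\nabla\log|\Upsilon|_{\omega}$ and $\chi:=-\eta\wedge\Im\Upsilon-\tfrac12\omega^2$ and invoke Cartan's formula $\rd(X\lrcorner\chi)=\cL_X\chi-X\lrcorner\rd\chi$. The key observation is that $\rd\chi=0$: using $\rd\eta=\omega$, $\rd\omega=0$ and $\rd\Upsilon=0$ one computes $\rd\chi=-\omega\wedge\Im\Upsilon$, and this vanishes for transverse type reasons, since $\omega\wedge\Upsilon$ is of type $(4,1)$ on the $3$-dimensional transverse distribution. Consequently $\rd(X\lrcorner\chi)=\cL_X\chi$, which is precisely the first (black) term of \eqref{Eq:Laplace.psi.cCY}.

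For the second summand I would expand by Leibniz, $\rd\big(2|\Upsilon|^2_{\omega}\,\eta\wedge\omega\big)=2\,\rd(|\Upsilon|^2_{\omega})\wedge\eta\wedge\omega+2|\Upsilon|^2_{\omega}\,\rd(\eta\wedge\omega)$. Writing $|\Upsilon|^2_{\omega}=\exp(2\log|\Upsilon|_{\omega})$ gives $\rd(|\Upsilon|^2_{\omega})=2|\Upsilon|^2_{\omega}\,\rd(\log|\Upsilon|_{\omega})$, while $\rd(\eta\wedge\omega)=\rd\eta\wedge\omega=\omega^2$. These produce exactly the two red terms $4|\Upsilon|^2_{\omega}\,\rd(\log|\Upsilon|_{\omega})\wedge\eta\wedge\omega$ and $2|\Upsilon|^2_{\omega}\,\omega^2$. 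Adding the two summands yields \eqref{Eq:Laplace.psi.cCY}.

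The argument is essentially routine once the reduction $\Delta\psi=\rd\ast\rd\varphi$ is in place, so the points I would treat most carefully—the only genuine subtleties—are the sign bookkeeping in $\rd^{\ast}=\ast\rd\ast$ and $\ast\psi=\varphi$ on a $7$-manifold, and the vanishing $\rd\chi=0$, which hinges on the transverse $(p,q)$-type identity $\omega\wedge\Im\Upsilon=0$ rather than on any Ricci-flatness hypothesis. No integration by parts or analytic input is required, since $\Delta\psi$ is extracted pointwise from the already-known $\ast\rd\varphi$.
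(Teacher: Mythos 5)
Your proposal is correct and follows essentially the same route as the paper: reduce to $\Delta\psi=\rd\ast\rd\varphi$ via coclosedness, apply Cartan's formula to the contraction term, and expand $\rd(2|\Upsilon|_\omega^2\,\eta\wedge\omega)$ by Leibniz. If anything, you are slightly more explicit than the paper in justifying that $\rd\big({-\eta}\wedge\Im\Upsilon-\tfrac12\omega^2\big)=-\omega\wedge\Im\Upsilon=0$ by the transverse type argument, a step the paper leaves implicit under ``$\omega$ and $\Upsilon$ are closed.''
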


\begin{proof}
    Since $\varphi$ is coclosed,  the Hodge Laplacian is given by $\Delta \psi = \rd \rd^{\ast} \psi = \rd \ast \rd \varphi$. We recall Cartan's formula $\cL_Y \alpha = \rd (Y \lrcorner \alpha) + Y \lrcorner (\rd \alpha)$, for $\alpha \in \Omega^k(M)$ and $Y \in \fX(M)$. Using the fact that $\omega$ and $\Upsilon$ are closed, together with \eqref{Eq:d.varphi.cCY} and \eqref{Eq:star.d.varphi.cCY}, we get 
    \begin{align*}
        \Delta \psi 
        &= \rd \ast \rd \varphi = \cL_{\nabla (\log|\Upsilon|_{\omega})} \left( -\eta \wedge \Im \Upsilon - \frac{1}{2} \omega^2 \right) \mathcolor{red}{+ 2 \rd \left( |\Upsilon|_{\omega}^2 \eta \wedge \omega \right)}  \\
        &= \cL_{\nabla (\log|\Upsilon|_{\omega})} \left( -\eta \wedge \Im \Upsilon - \frac{1}{2} \omega^2 \right) \mathcolor{red}{+ 4|\Upsilon|_{\omega}^2 \rd \left( \log|\Upsilon|_{\omega} \right) \wedge \eta \wedge \omega + 2|\Upsilon|^2_{\omega} \omega^2.}
        \qedhere
    \end{align*}
\end{proof}

\subsection{The Laplacian coflow} \label{Subsect:Laplacian.coflow.cCY}

Let $(M,\eta,\Phi,\Upsilon)$ be a contact Calabi--Yau $7$-manifold with $\rG_2$-structure $\varphi$ defined by  \eqref{Eq:varphi.cCY}. We now consider the Laplacian coflow in this setting.  Define a family of contact forms by $\eta_t = \eta + \rd^c f_t$, where each $f_t$ is a basic function. This in turn defines a family of transverse K\"{a}hler structures 
$$\omega_t = \rd \eta_t = \omega + \rd \rd^c f_t.$$ 
We note that the endomorphism $\Phi_t$ varies, but the Reeb vector field $\xi$, the space of basic forms $\Omega^\bullet_B(M)$, and the transverse complex structure $J$ remain constant under these deformations (see Appendix \ref{Ap.Sasakian}).

We have the following result, analogous to Corollary \ref{cor:Modified.coflow.product.fixed.J}, describing the effects of fixing the transverse complex structure $J$.

\begin{theorem} \label{Thm:Laplacian.coflow.cCY.fixed.J}
    Let $(M,\eta,\Phi,\Upsilon)$ be a contact Calabi--Yau $7$-manifold with transverse K\"{a}hler form $\omega = \rd \eta$ and transverse holomorphic $(3,0)$-form $\Upsilon$. Suppose we have a family of compatible transverse $\SU(3)$-structures $(\omega_t,\Upsilon_t)$ on $M$ satisfying the coupled differential equations 
    \begin{align}
        \frac{\partial}{\partial t} \omega_t &= - \cL_{\nabla_t (\log|\Upsilon_t|_{\omega_t})} \omega_t + \beta_t, \label{Eq:omega.t.system.cCY} \\
        \frac{\partial}{\partial t} \Upsilon_t &= \cL_{\nabla_t (\log|\Upsilon_t|_{\omega_t})} \Upsilon_t + \gamma_t. \label{Eq:Upsilon.t.system.cCY}
    \end{align}
    where $\beta_t \in \Omega^2_B(M)$, $\gamma_t \in \Omega^3_B(M)$ with initial conditions $\omega_0 = \omega$, $\Upsilon_0 = \Upsilon$. Suppose further that there exists a family of basic functions $\{f_t\}$ such that $\omega_t = \omega + \rd \rd^c f_t$, and let $\eta_t := \eta + \rd^c f_t$. 

    Then, the family of $\rG_2$-structures given by
    \begin{equation} \label{Eq:varphi.t.cCY}
        \varphi_t = \Re \left( \frac{1}{|\Upsilon_t|_{\omega_t}} \Upsilon_t \right) + |\Upsilon|_{\omega_t} \eta_t \wedge \omega_t.
    \end{equation}
    is a solution of the Laplacian coflow \eqref{eq: Laplacian.coflow} if, and only if, 
    \begin{align}
    \beta_t \wedge \omega_t &= \mathcolor{red}{2|\Upsilon_t|^2_{\omega_t} \omega_t^2} - \left( \cL_{\nabla (\log|\Upsilon_t|_{\omega_t})} \rd^c f_t \right) \wedge \Im \Upsilon_t \nonumber \\
    &\qquad - \Big( \nabla (\log|\Upsilon_t|_{\omega_t}) \lrcorner \omega \Big) \wedge \Im \Upsilon_t + \left( \frac{\del}{\del t} \rd^c f_t \right) \wedge \Im \Upsilon_t, \label{Eq:beta.t.cCY} \\
    \Im (\gamma_t) &= \mathcolor{red}{4|\Upsilon|_{\omega_t}^2 \rd \left( \log|\Upsilon_t|_{\omega_t} \right) \wedge \omega_t.} \label{Eq:gamma.t.cCY}
 \end{align}
\end{theorem}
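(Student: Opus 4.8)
The plan is to follow the template of Theorem \ref{Thm:Modified.coflow.product}: compute both sides of the coflow equation $\partial_t\psi_t=\Delta_t\psi_t$ for the dual $4$-form, substitute the prescribed Ans\"atze \eqref{Eq:omega.t.system.cCY}--\eqref{Eq:Upsilon.t.system.cCY}, and then split the resulting identity between its basic part and the part proportional to $\eta_t$. Here the dual $4$-form is $\psi_t=-\eta_t\wedge\Im\Upsilon_t+\tfrac12\omega_t^2$, exactly as in \eqref{Eq:psi.cCY} but with every object carrying its time dependence. The structural observation that makes this work is that, by hypothesis, $(\omega_t,\Upsilon_t)$ is a compatible transverse $\SU(3)$-structure with $\omega_t=\rd\eta_t$ at every time $t$, so $\varphi_t$ is of precisely the form treated in Proposition \ref{Prop:Laplacian.psi.cCY}; hence I may apply that proposition verbatim at each fixed $t$ to obtain
\[
\Delta_t\psi_t=\cL_{\nabla_t(\log|\Upsilon_t|_{\omega_t})}\Big(-\eta_t\wedge\Im\Upsilon_t-\tfrac12\omega_t^2\Big)+4|\Upsilon_t|_{\omega_t}^2\,\rd(\log|\Upsilon_t|_{\omega_t})\wedge\eta_t\wedge\omega_t+2|\Upsilon_t|_{\omega_t}^2\,\omega_t^2 .
\]
The two trailing terms (the red terms of Proposition \ref{Prop:Laplacian.psi.cCY}) are the genuinely new contributions coming from $\rd\eta=\omega\neq0$, i.e.\ from the nontrivial topology relative to the product case.

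For the left-hand side I would differentiate $\psi_t$ in $t$. In contrast to the product situation, where $\rd r$ was time-independent, here $\eta_t=\eta+\rd^c f_t$ varies, which produces the extra term $-(\partial_t\rd^c f_t)\wedge\Im\Upsilon_t$; thus
\[
\partial_t\psi_t=-(\partial_t\rd^c f_t)\wedge\Im\Upsilon_t-\eta_t\wedge\partial_t\Im\Upsilon_t+\omega_t\wedge\partial_t\omega_t .
\]
Substituting \eqref{Eq:omega.t.system.cCY} and \eqref{Eq:Upsilon.t.system.cCY} and using $\partial_t\Im\Upsilon_t=\cL_{\nabla_t}\Im\Upsilon_t+\Im\gamma_t$, the pieces $-\eta_t\wedge\cL_{\nabla_t}\Im\Upsilon_t$ and $-\omega_t\wedge\cL_{\nabla_t}\omega_t$ appear on both sides and cancel against the corresponding parts of the expanded Lie derivative $\cL_{\nabla_t}(-\eta_t\wedge\Im\Upsilon_t-\tfrac12\omega_t^2)$ on the right. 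This leaves an identity relating $\beta_t$, $\Im\gamma_t$, $\partial_t\rd^c f_t$, the term $4|\Upsilon_t|_{\omega_t}^2\rd(\log|\Upsilon_t|_{\omega_t})\wedge\eta_t\wedge\omega_t$, the term $2|\Upsilon_t|_{\omega_t}^2\omega_t^2$, and the residual Lie-derivative term $-(\cL_{\nabla_t}\eta_t)\wedge\Im\Upsilon_t$.

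The one genuinely new computation is the residual $\cL_{\nabla_t}\eta_t$, which vanished trivially in the product case. Here I would write $\cL_{\nabla_t}\eta_t=\cL_{\nabla_t}\eta+\cL_{\nabla_t}\rd^c f_t$ and apply Cartan's formula to the first summand: since $\log|\Upsilon_t|_{\omega_t}$ is basic, its gradient $\nabla_t(\log|\Upsilon_t|_{\omega_t})$ is horizontal, so $\eta(\nabla_t)=0$ and $\cL_{\nabla_t}\eta=\nabla_t\lrcorner\rd\eta=\nabla_t\lrcorner\omega$, with the \emph{original} $\omega=\rd\eta$. This accounts for the term $-(\nabla(\log|\Upsilon_t|_{\omega_t})\lrcorner\omega)\wedge\Im\Upsilon_t$ in \eqref{Eq:beta.t.cCY}, while $\cL_{\nabla_t}\rd^c f_t$ stays basic and contributes the term $-(\cL_{\nabla_t}\rd^c f_t)\wedge\Im\Upsilon_t$.

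Finally I would decompose the resulting $4$-form identity using the splitting of forms on $M$ into a basic part and a part of the shape $\eta_t\wedge(\text{basic})$. Contracting with the Reeb field $\xi$, for which $\xi\lrcorner\eta_t=1$ and $\xi\lrcorner(\text{basic})=0$, extracts the $\eta_t$-component and yields $\Im\gamma_t=4|\Upsilon_t|_{\omega_t}^2\rd(\log|\Upsilon_t|_{\omega_t})\wedge\omega_t$, which is \eqref{Eq:gamma.t.cCY}; the complementary basic part then expresses $\beta_t\wedge\omega_t$ in terms of $2|\Upsilon_t|_{\omega_t}^2\omega_t^2$ and the three $\Im\Upsilon_t$-terms, giving \eqref{Eq:beta.t.cCY}. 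Since each step is reversible, this clean separation yields the claimed equivalence in both directions. I expect the main obstacle to be bookkeeping rather than conceptual: correctly tracking the two new basic contributions $\partial_t\rd^c f_t$ and $\cL_{\nabla_t}\rd^c f_t$ (which must land entirely in the $\beta_t$ equation) and confirming that the horizontality of $\nabla_t$ genuinely kills $\eta(\nabla_t)$, so that $\cL_{\nabla_t}\eta$ enters only through $\nabla_t\lrcorner\omega$ as asserted.
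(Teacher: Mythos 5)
Your proposal is correct and follows essentially the same route as the paper's proof: apply Proposition \ref{Prop:Laplacian.psi.cCY} at each time $t$ (valid since $\omega_t=\rd\eta_t$ throughout), differentiate $\psi_t=-\eta_t\wedge\Im\Upsilon_t+\tfrac12\omega_t^2$ picking up the extra $-(\partial_t\rd^c f_t)\wedge\Im\Upsilon_t$ term, handle $\cL_{\nabla_t}\eta_t$ via Cartan's formula and horizontality of the basic gradient to produce the $\nabla_t\lrcorner\,\omega$ and $\cL_{\nabla_t}\rd^c f_t$ contributions, and finally contract with $\xi$ to isolate \eqref{Eq:gamma.t.cCY} before back-substituting for \eqref{Eq:beta.t.cCY}. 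The only cosmetic difference is that you split $\eta_t=\eta+\rd^c f_t$ before applying Cartan's formula whereas the paper applies it to $\eta_t$ first and then uses $\nabla_t\lrcorner\,\eta=0$; the two computations are identical.
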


\begin{proof}
    The family of $\rG_2$-structures defined by \eqref{Eq:varphi.t.cCY} has associated $4$-form $\psi_t = \ast_t \varphi_t$ given by \eqref{Eq:psi.cCY}, whose evolution equation is  
    \begin{align} 
        \frac{\partial}{\partial t} \psi_t &= \frac{\partial}{\partial t} \left( -\eta_t \wedge \Im \Upsilon_t + \frac{1}{2} \omega_t^2 \right) = -\eta_t \wedge \left( \frac{\partial}{\partial t} \Im \Upsilon \right) + \frac{1}{2} \left( \frac{\partial}{\partial t} \omega_t^2 \right) - \left( \frac{\partial}{\partial t} \eta_t \right) \wedge \Im \Upsilon_t \nonumber \\
        &{= - \eta_t \wedge \left( \cL_{\nabla_t (\log|\Upsilon_t|_{\omega_t})} \Im \Upsilon_t \right) + \cL_{\nabla_t (\log|\Upsilon_t|_{\omega_t})} \left(- \frac{1}{2} \omega_t^2 \right)} \nonumber \\
        &\qquad \qquad {- \eta_t \wedge \Im (\gamma_t) + \beta_t \wedge \omega_t - \left( \frac{\partial}{\partial t} \eta_t \right) \wedge \Im \Upsilon_t,} \label{Eq:Laplace.cCY.1}
    \end{align}
    {where we have used \eqref{Eq:omega.t.system.cCY} and \eqref{Eq:Upsilon.t.system.cCY}.}
    
    Thus, applying Proposition \ref{Prop:Laplacian.psi.cCY} to the Laplacian coflow \eqref{Eq:Laplace.psi.cCY}, we obtain
    \begin{align} \label{Eq:Laplace.cCY.2}
        &\frac{\partial}{\partial t} \left( -\eta_t \wedge \Im \Upsilon_t + \frac{1}{2} \omega_t^2 \right) \nonumber \\
        &= \cL_{\nabla_t (\log|\Upsilon_t|_{\omega_t})} \left( -\eta_t \wedge \Im \Upsilon_t - \frac{1}{2} \omega_t^2 \right) \mathcolor{red}{+ 4|\Upsilon_t|_{\omega_t}^2 \rd \left( \log|\Upsilon_t|_{\omega_t} \right)\wedge \eta_t \wedge \omega_t + 2|\Upsilon|^2_{\omega_t} \omega_t^2} .
    \end{align}
    
    Applying Cartan's magic formula to the Lie derivative term, we obtain
    \begin{align}
        &\cL_{\nabla_t (\log|\Upsilon_t|_{\omega_t})} (\eta_t \wedge \Im \Upsilon) \nonumber \\
        &= \left( \cL_{\nabla_t (\log|\Upsilon_t|_{\omega_t})} \eta_t \right) \wedge \Im \Upsilon_t + \eta_t \wedge \left( \cL_{\nabla (\log|\Upsilon_t|_{\omega_t})} \Im \Upsilon \right) \nonumber \\
        &= \rd \Big( \nabla_t (\log|\Upsilon_t|_{\omega_t}) \lrcorner \eta_t \Big) \wedge \Im \Upsilon_t + \Big( \nabla_t (\log|\Upsilon_t|_{\omega_t}) \lrcorner \omega_t \Big) \wedge \Im \Upsilon_t + \eta_t \wedge \left( \cL_{\nabla_t (\log|\Upsilon_t|_{\omega_t})} \Im \Upsilon_t \right).
    \end{align}
    Since $\rd \left(\log|\Upsilon_t|_{\omega_t} \right)$ is a basic function (recall that the Reeb vector field $\xi$ is fixed along these deformations) and $\eta_t = \eta + \rd^c f_t$, the above expression becomes
    \begin{align} \label{Eq:Lie.derivative.cCY}
        &\cL_{\nabla_t (\log|\Upsilon_t|_{\omega_t})} (\eta_t \wedge \Im \Upsilon_t) \nonumber \\
        &= \rd \Big( \nabla_t (\log|\Upsilon_t|_{\omega_t}) \lrcorner \rd^c f_t \Big) \wedge \Im \Upsilon_t + \Big( \nabla_t (\log|\Upsilon_t|_{\omega_t}) \lrcorner \omega_t \Big) \wedge \Im \Upsilon_t + \eta_t \wedge \left( \cL_{\nabla_t (\log|\Upsilon_t|_{\omega_t})} \Im \Upsilon_t \right) \nonumber \\
        &= \left (\cL_{\nabla_t (\log|\Upsilon_t|_{\omega_t})} \rd^c f_t \right) \wedge \Im \Upsilon_t + \Big( \nabla_t (\log|\Upsilon_t|_{\omega_t}) \lrcorner \omega \Big) \wedge \Im \Upsilon_t + \eta_t \wedge \left( \cL_{\nabla_t (\log|\Upsilon_t|_{\omega_t})} \Im \Upsilon_t \right).
    \end{align}
    
    {Comparing \eqref{Eq:Laplace.cCY.1} and \eqref{Eq:Laplace.cCY.2},} and using \eqref{Eq:Lie.derivative.cCY}, we get
    \begin{align} \label{Eq:beta.t.gamma.t.cCY}
        &- \left( \frac{\partial}{\partial t} \eta_t \right) \wedge \Im \Upsilon_t - \eta_t \wedge \Im (\gamma_t) + \beta_t \wedge \omega_t \nonumber \\
        &= \mathcolor{red}{4|\Upsilon_t|_{\omega_t}^2  \rd \left( \log|\Upsilon_t|_{\omega_t} \right) \wedge \eta_t \wedge \omega_t + 2|\Upsilon_t|^2_{\omega_t} \omega_t^2} \nonumber \\
        &\qquad - \left( \cL_{\nabla_t (\log|\Upsilon_t|_{\omega_t})} \rd^c f_t \right) \wedge \Im \Upsilon_t - \Big( \nabla_t (\log|\Upsilon_t|_{\omega_t}) \lrcorner \omega \Big) \wedge \Im \Upsilon_t.
    \end{align}

    Since the Reeb vector field $\xi$ is constant with respect to $t$, we can contract by $\xi$ and obtain
    \begin{align}
        -\left[\xi \lrcorner \left( \frac{\partial}{\partial t} \eta_t \right) \right] \wedge \Im \Upsilon_t - (\xi \lrcorner \, \eta_t) \wedge \Im (\gamma_t) = \mathcolor{red}{- 4 |\Upsilon_t|_{\omega_t}^2 (\xi \lrcorner \, \eta_t) \wedge \rd \left( \log|\Upsilon_t|_{\omega_t} \right) \wedge \omega_t.}
    \end{align}
     Using $\eta_t = \eta + \rd^c f_t$, this simplifies to
    \begin{align}\label{Eq.Im.gamma}
        \Im (\gamma_t) &= \mathcolor{red}{4|\Upsilon_t|_{\omega_t}^2 \rd \left( \log|\Upsilon_t|_{\omega_t} \right) \wedge \omega_t.} 
    \end{align}
    {Substituting \eqref{Eq.Im.gamma} into \eqref{Eq:beta.t.gamma.t.cCY} yields:} 
    \begin{equation}
        \beta_t \wedge \omega_t = \mathcolor{red}{2|\Upsilon_t|^2_{\omega_t} \omega_t^2} - \left( \cL_{\nabla_t (\log|\Upsilon_t|_{\omega_t})} \rd^c f_t \right) \wedge \Im \Upsilon_t - \Big( \nabla_t (\log|\Upsilon_t|_{\omega_t}) \lrcorner \omega \Big) \wedge \Im \Upsilon_t + \left( \frac{\del}{\del t} \rd^c f_t \right) \wedge \Im \Upsilon_t,
    \end{equation}
    which concludes the proof.
\end{proof}

\subsection{The modified Laplacian coflow} 
\label{Subsect:Modified.cof.ccY}

We now turn our attention to the modified Laplacian coflow \eqref{Eq:Modified.coflow}. Recall that if $(\omega, \Upsilon)$ is a transverse $\SU(3)$-structure on a contact Calabi--Yau $7$-manifold $(M^7, \eta, \Phi, \Upsilon)$, we can define a coclosed $\rG_2$-structure by
\begin{align}
    \varphi = \Re \left( \frac{1}{|\Upsilon|_{\omega}} \Upsilon \right) + |\Upsilon|_{\omega} \eta \wedge \omega. \nonumber
\end{align}
Such a $\rG_2$-structure has $\tau_0 = \frac{6}{7} |\Upsilon|_{\omega}$, hence the added term in the modified coflow with constant $A$ would be
\begin{align}
    \rd \left( \left( A - \frac{7}{2} \tau_0 \right) \varphi \right) &= - \frac{A}{|\Upsilon|_{\omega}} \rd (\log |\Upsilon|_{\omega}) \wedge \Re (\Upsilon) + A |\Upsilon|_{\omega} \rd (\log |\Upsilon|_{\omega}) \wedge \eta \wedge \omega + A |\Upsilon|_{\omega} \omega^2 \nonumber \\
    &\qquad - 6 |\Upsilon|^2_{\omega} \rd (\log |\Upsilon|_{\omega}) \wedge \eta \wedge \omega - 3 |\Upsilon|^2_{\omega} \omega^2. \label{Eq:Modified.coflow.extra.cCY}
\end{align}

Let $\eta_t$ and $\omega_t$ evolve as in the previous subsection, via \eqref{Eq:omega.t.system.cCY} and \eqref{Eq:Upsilon.t.system.cCY}. As before, the Reeb vector field $\xi$ and transverse complex structure stay fixed. We now obtain a similar result to Theorem \ref{Thm:Laplacian.coflow.cCY.fixed.J} for the modified coflow. As before, the \textcolor{red}{red} terms are from the non-trivial topology. We shall also denote terms derived from the de Turck modification in \textcolor{blue}{blue}, and terms coming from a combination of both the topology and the modification in \textcolor{purple}{purple}.

\begin{theorem} \label{thm:modified.coflow.cCY.fixed.J}
    Let $(M^7,\eta,\Phi,\Upsilon)$ be a contact Calabi--Yau $7$-manifold, with transverse K\"{a}hler form $\omega = \rd \eta$ and transverse holomorphic $(3,0)$-form $\Upsilon$. Suppose we have a family of compatible transverse $\SU(3)$-structures $(\omega_t,\Upsilon_t)$ on $M$ satisfying the coupled differential equations: 
    \begin{align}
        \frac{\partial}{\partial t} \omega_t &= - \cL_{\nabla_t (\log|\Upsilon_t|_{\omega_t})} \omega_t + \beta_t \label{Eq:omega.t.system.cCY-2} \\
        \frac{\partial}{\partial t} \Upsilon_t &= \cL_{\nabla_t (\log|\Upsilon_t|_{\omega_t})} \Upsilon_t + \gamma_t, \label{Eq:Upsilon.t.system.cCY-2}
    \end{align}
    where $\beta_t \in \Omega^2_B(M)$, $\gamma_t \in \Omega^3_B(M)$, with initial conditions $\omega_0 = \omega$, $\Upsilon_0 = \Upsilon$. Suppose further that there exists a family of basic functions $f_t$ such that $\omega_t = \omega + \rd \rd^c f_t$, and let $\eta_t := \eta + \rd^c f_t$. 

    Then the family of $\rG_2$-structures given by
    \begin{equation}
    \label{Eq:varphi.t.cCY-2}
        \varphi_t = \Re \left( \frac{1}{|\Upsilon_t|_{\omega_t}} \Upsilon_t \right) + |\Upsilon|_{\omega_t} \eta_t \wedge \omega_t
    \end{equation}
    is a solution of the modified Laplacian coflow \eqref{Eq:Modified.coflow} if, and only if,
    \begin{align}
        \beta_t \wedge \omega_t 
        &= \mathcolor{purple}{-|\Upsilon_t|^2_{\omega_t} \omega_t^2} \mathcolor{blue}{+ A |\Upsilon_t|_{\omega_t} \omega_t^2 - \frac{A}{|\Upsilon_t|_{\omega_t}} \rd (\log |\Upsilon_t|_{\omega_t}) \wedge \Re \Upsilon_t} \nonumber \\
        &\qquad  - \left( \cL_{\nabla_t (\log|\Upsilon_t|_{\omega_t})} \rd^c f_t \right) \wedge \Im \Upsilon_t - \Big( \nabla_t (\log|\Upsilon_t|_{\omega_t}) \lrcorner \omega \Big) \wedge \Im \Upsilon_t + \left( \frac{\del}{\del t} \rd^c f_t \right) \wedge \Im \Upsilon_t, \label{Eq:beta.t.cCY-2} \\
        \Im (\gamma_t) 
        &= \mathcolor{purple}{-2|\Upsilon_t|_{\omega_t}^2 \rd \left( \log|\Upsilon_t|_{\omega_t} \right) \wedge \omega_t} \mathcolor{blue}{+ A |\Upsilon_t|_{\omega_t} \rd \left( \log|\Upsilon_t|_{\omega_t} \right) \wedge \omega_t.} \label{Eq:gamma.t.cCYY-2}
    \end{align}
\end{theorem}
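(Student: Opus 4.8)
The plan is to imitate the proof of Theorem \ref{Thm:Laplacian.coflow.cCY.fixed.J} essentially verbatim, replacing the bare Laplacian coflow with the modified coflow. That is, I start from the evolution equation \eqref{Eq:Laplace.cCY.1} for $\psi_t = -\eta_t\wedge\Im\Upsilon_t + \tfrac12\omega_t^2$, which is unchanged. On the right-hand side I now add, to the Hodge Laplacian from Proposition \ref{Prop:Laplacian.psi.cCY}, the de Turck term $\rd\big((A-\tfrac72\tau_0)\varphi_t\big)$, which Proposition \ref{Prop:Torsion.ccY} already evaluated for me in \eqref{Eq:Modified.coflow.extra.cCY} via $\tau_0 = \tfrac67|\Upsilon_t|_{\omega_t}$. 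So the only genuinely new input is pasting \eqref{Eq:Modified.coflow.extra.cCY} into the right-hand side of the analogue of \eqref{Eq:Laplace.cCY.2}.

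Concretely, first I would write down the combined right-hand side $\Delta_t\psi_t + \rd\big((A-\tfrac72\tau_0)\varphi_t\big)$, merging the red topological terms from \eqref{Eq:Laplace.psi.cCY} with the blue/purple terms from \eqref{Eq:Modified.coflow.extra.cCY}. Then I would reuse the Cartan-formula computation of $\cL_{\nabla_t(\log|\Upsilon_t|_{\omega_t})}(\eta_t\wedge\Im\Upsilon_t)$ exactly as in \eqref{Eq:Lie.derivative.cCY} — this step is identical and can be cited rather than redone. Substituting the system \eqref{Eq:omega.t.system.cCY-2}–\eqref{Eq:Upsilon.t.system.cCY-2} for $\partial_t\omega_t$ and $\partial_t\Upsilon_t$ and matching the left-hand side \eqref{Eq:Laplace.cCY.1} yields a single form equation, the modified analogue of \eqref{Eq:beta.t.gamma.t.cCY}, now carrying the extra de Turck terms.

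From there the algebra of separating the two constraints is identical to before: contract the whole equation with the Reeb field $\xi$. Because $\xi\lrcorner\eta_t = 1$ while $\xi\lrcorner(\rd\log|\Upsilon_t|_{\omega_t}\wedge\omega_t\wedge\dots)$ kills the basic wedge terms, the contraction isolates $\Im(\gamma_t)$. One must be careful to track which of the terms in \eqref{Eq:Modified.coflow.extra.cCY} survive contraction by $\xi$: the terms proportional to $\eta_t\wedge(\cdots)$ contribute (after using $\xi\lrcorner\eta_t=1$), whereas the purely basic terms $|\Upsilon_t|_{\omega_t}\omega_t^2$, $-3|\Upsilon_t|^2_{\omega_t}\omega_t^2$, and $-\tfrac{A}{|\Upsilon_t|_{\omega_t}}\rd(\log|\Upsilon_t|_{\omega_t})\wedge\Re\Upsilon_t$ are annihilated. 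Collecting the surviving coefficients gives \eqref{Eq:gamma.t.cCYY-2}, combining the $-2|\Upsilon_t|^2_{\omega_t}$ (purple) and $+A|\Upsilon_t|_{\omega_t}$ (blue) contributions. Plugging $\Im(\gamma_t)$ back into the full form equation and cancelling the $\eta_t\wedge\Im(\gamma_t)$ and $(\partial_t\eta_t)\wedge\Im\Upsilon_t$ terms leaves the purely basic identity \eqref{Eq:beta.t.cCY-2} for $\beta_t\wedge\omega_t$.

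I expect no conceptual obstacle; the difficulty is purely bookkeeping. The main pitfall is sign and coefficient tracking when merging the red Laplacian term $+4|\Upsilon_t|^2_{\omega_t}\rd(\log|\Upsilon_t|_{\omega_t})\wedge\eta_t\wedge\omega_t + 2|\Upsilon_t|^2_{\omega_t}\omega_t^2$ from Proposition \ref{Prop:Laplacian.psi.cCY} with the modification term \eqref{Eq:Modified.coflow.extra.cCY}, which carries $-6|\Upsilon|^2_{\omega}\rd(\log|\Upsilon|_{\omega})\wedge\eta\wedge\omega - 3|\Upsilon|^2_{\omega}\omega^2$. The net $\eta_t\wedge\omega_t$-coefficient becomes $(4-6)|\Upsilon_t|^2_{\omega_t} + A|\Upsilon_t|_{\omega_t} = -2|\Upsilon_t|^2_{\omega_t}+A|\Upsilon_t|_{\omega_t}$, matching the purple/blue split in \eqref{Eq:gamma.t.cCYY-2}, and the net $\omega_t^2$-coefficient becomes $(2-3)|\Upsilon_t|^2_{\omega_t}+A|\Upsilon_t|_{\omega_t} = -|\Upsilon_t|^2_{\omega_t}+A|\Upsilon_t|_{\omega_t}$, matching \eqref{Eq:beta.t.cCY-2}. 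Verifying that these cancellations land exactly on the stated purple and blue terms, with $\Re\Upsilon_t$ appearing only through the untouched de Turck contribution, is the one place where care is required.
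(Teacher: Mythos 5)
Your proposal is correct and follows essentially the same route as the paper's own (sketched) proof: compute $\partial_t\psi_t$, add the de Turck term \eqref{Eq:Modified.coflow.extra.cCY} to the Hodge Laplacian from Proposition \ref{Prop:Laplacian.psi.cCY}, reuse the Cartan-formula identity \eqref{Eq:Lie.derivative.cCY}, contract with $\xi$ to isolate $\Im(\gamma_t)$, and substitute back for $\beta_t\wedge\omega_t$. Your coefficient bookkeeping ($4-6=-2$ on the $\eta_t\wedge\omega_t$ term and $2-3=-1$ on the $\omega_t^2$ term) matches the purple/blue split in the statement exactly.
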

\begin{proof}[Sketch of Proof]
    The proof is similar to that of Theorem \ref{Thm:Laplacian.coflow.cCY.fixed.J}, one just has to incorporate the extra terms computed in \eqref{Eq:Modified.coflow.extra.cCY}.  Recall that the dual $4$-form $\psi_t$ given by the expression
    \begin{align}
        \psi_t = -\eta_t \wedge \Im \Upsilon_t + \frac{1}{2} \omega_t^2,
    \end{align}
    and so
    \begin{align}
        \frac{\partial}{\partial t} \psi_t = -\eta_t \wedge \left( \frac{\partial}{\partial t} \Im \Upsilon \right) + \frac{1}{2} \left( \frac{\partial}{\partial t} \omega_t^2 \right) - \left( \frac{\partial}{\partial t} \eta_t \right) \wedge \Im \Upsilon_t.
    \end{align}
    
    Applying Proposition \ref{Prop:Laplacian.psi.cCY} and \eqref{Eq:Modified.coflow.extra.cCY}, the modified coflow implies the evolution equation
    \begin{align}
        &\frac{\partial}{\partial t} \left( -\eta_t \wedge \Im \Upsilon_t + \frac{1}{2} \omega_t^2 \right) \nonumber \\
        &= \cL_{\nabla_t (\log|\Upsilon_t|_{\omega_t})} \left( -\eta_t \wedge \Im \Upsilon_t - \frac{1}{2} \omega_t^2 \right) \mathcolor{red}{+ 4|\Upsilon_t|_{\omega_t}^2 \rd \left( \log|\Upsilon_t|_{\omega_t} \right) \eta_t \wedge \omega_t + 2|\Upsilon|^2_{\omega_t} \omega_t^2} \nonumber \\
        &\qquad \mathcolor{blue}{- \frac{A}{|\Upsilon_t|_{\omega_t}} \rd (\log |\Upsilon_t|_{\omega_t}) \wedge \Re (\Upsilon_t) + A |\Upsilon_t|_{\omega_t} \rd (\log |\Upsilon_t|_{\omega_t}) \wedge \eta_t \wedge \omega_t + A |\Upsilon_t|_{\omega_t} \omega_t^2} \nonumber \\
        &\qquad \mathcolor{blue}{- 6 |\Upsilon_t|^2_{\omega_t} \rd (\log |\Upsilon_t|_{\omega_t}) \wedge \eta_t \wedge \omega_t - 3 |\Upsilon_t|^2_{\omega_t} \omega_t^2,} \nonumber \\
        &= \cL_{\nabla_t (\log|\Upsilon_t|_{\omega_t})} \left( -\eta_t \wedge \Im \Upsilon_t - \frac{1}{2} \omega_t^2 \right) \mathcolor{purple}{- 2|\Upsilon_t|_{\omega_t}^2 \rd \left( \log|\Upsilon_t|_{\omega_t} \right) \eta_t \wedge \omega_t - |\Upsilon|^2_{\omega_t} \omega_t^2} \nonumber \\
        &\qquad \mathcolor{blue}{- \frac{A}{|\Upsilon_t|_{\omega_t}} \rd (\log |\Upsilon_t|_{\omega_t}) \wedge \Re (\Upsilon_t) + A |\Upsilon_t|_{\omega_t} \rd (\log |\Upsilon_t|_{\omega_t}) \wedge \eta_t \wedge \omega_t + A |\Upsilon_t|_{\omega_t} \omega_t^2.} \nonumber
    \end{align}
    
    By the proof of Theorem \ref{Thm:Laplacian.coflow.cCY.fixed.J}, we have
    \begin{align}
        &\cL_{\nabla_t (\log|\Upsilon_t|_{\omega_t})} (\eta_t \wedge \Im \Upsilon_t) \nonumber \\
        &= \left (\cL_{\nabla_t (\log|\Upsilon_t|_{\omega_t})} \rd^c f_t \right) \wedge \Im \Upsilon_t + \Big( \nabla_t (\log|\Upsilon_t|_{\omega_t}) \lrcorner \omega \Big) \wedge \Im \Upsilon_t + \eta_t \wedge \left( \cL_{\nabla_t (\log|\Upsilon_t|_{\omega_t})} \Im \Upsilon_t \right).
    \end{align}

    Applying the Ans\"{a}tze \eqref{Eq:omega.t.system.cCY-2} and \eqref{Eq:Upsilon.t.system.cCY-2}, we are left with
    \begin{align} 
        &- \left( \frac{\partial}{\partial t} \eta_t \right) \wedge \Im \Upsilon_t - \eta_t \wedge \Im (\gamma_t) + \beta_t \wedge \omega_t \nonumber \\
        &= \mathcolor{purple}{- 2|\Upsilon_t|_{\omega_t}^2 \rd \left( \log|\Upsilon_t|_{\omega_t} \right) \wedge \eta_t \wedge \omega_t - |\Upsilon_t|^2_{\omega_t} \omega_t^2} \nonumber \\
        &\qquad \mathcolor{blue}{- \frac{A}{|\Upsilon_t|_{\omega_t}} d(\log |\Upsilon_t|_{\omega_t}) \wedge \Re (\Upsilon_t) + A |\Upsilon_t|_{\omega_t} \rd (\log |\Upsilon_t|_{\omega_t}) \wedge \eta_t \wedge \omega_t + A |\Upsilon_t|_{\omega_t} \omega_t^2} \nonumber \\
        &\qquad - \left( \cL_{\nabla_t (\log|\Upsilon_t|_{\omega_t})} \rd^c f_t \right) \wedge \Im \Upsilon_t - \Big( \nabla_t (\log|\Upsilon_t|_{\omega_t}) \lrcorner \omega \Big) \wedge \Im \Upsilon_t.  \label{Eq:beta.t.gamma.t.cCY-modified}
    \end{align}

    Contracting with the Reeb vector field $\xi$, we get
    \begin{align}
        \Im (\gamma_t) &= \mathcolor{purple}{-2|\Upsilon_t|_{\omega_t}^2 \rd \left( \log|\Upsilon_t|_{\omega_t} \right) \wedge \omega_t} \mathcolor{blue}{+ A |\Upsilon_t|_{\omega_t} \rd \left( \log|\Upsilon_t|_{\omega_t} \right) \wedge \omega_t.} 
    \end{align}
    The other equation is obtained by substituting the above into \eqref{Eq:beta.t.gamma.t.cCY-modified}.
\end{proof}

\section{Solutions from a particular initial condition} \label{Sect:Modified.coflow.LSES}

We now study a particular solution of the modified Laplacian coflow \eqref{Eq:Modified.coflow} analogous to that obtained in  \cite{Lotay2022}.

Let $(M^7, \eta, \Phi, \Upsilon)$ be a contact Calabi--Yau $7$-manifold and suppose that $(\omega, \Upsilon)$ is a transverse Calabi--Yau structure, that is, $\omega = \rd \eta$ is transverse Ricci-flat and $\Upsilon$ is a nowhere-vanishing transverse holomorphic $(3,0)$-form. Recall that in this case, the norm $|\Upsilon|_\omega$ is constant and can be scaled to be $1$. Consider a family of $\rG_2$-structures on $M$ defined by
\begin{align} 
\label{Eq:varphi_t.LSES}
    \varphi_t = b_t^3 \Re \Upsilon + a_t b_t^2 \eta \wedge \omega 
\end{align}
where the functions $a_t$, $b_t$ depend only on $t$ and are constant on $M$. The induced metrics $g_t$ and volume forms $\vol_t$ can be checked to be
\begin{align}\label{eq: metric.vol.t}
    g_t = a_t^2 \eta^2 + b_t^2 g|_{\cD} 
    \qandq
    \vol_t = a_t b_t^6 \eta \wedge \vol|_{\cD}.
\end{align}
It follows that the dual $4$-form $\psi_t$ is
\begin{align}
    \psi_t = - a_t b_t^3 \eta \wedge \Im \Upsilon + \frac{1}{2} b_t^4 \omega^2.
\end{align}

We set initial conditions for the fibre radius $a_0 = \epsilon$ and basic scale $b_0 = 1$, so that
\begin{align}
\label{eq: phi_0}
    \varphi_0 = \Re \Upsilon + \epsilon \eta \wedge \omega
    \qandq
    \psi_0 = - \epsilon \eta \wedge \Im \Upsilon + \frac{1}{2} \omega^2,
\end{align}
We have the following expressions for exterior derivatives and torsion forms along the family $\{\varphi_t\}$.

\begin{lemma}[\cite{Lotay2022} {Propositions 4.5 and 4.6}]
\label{Lem:torsion.ccY.LSES}
    Let $\varphi_t$ be defined by \eqref{Eq:varphi_t.LSES}, then
    \begin{align}
        \rd \varphi_t = a_t b_t^2 \omega^2, \qquad \rd \psi_t = 0, \qquad \ast_t \rd \varphi_t = 2 a_t^2 \eta \wedge \omega.
    \end{align}
    As such, the torsion forms are
    \begin{align}
        (\tau_0)_t = \frac{6 a_t}{7 b_t^2}, \qquad (\tau_1)_t = 0, \qquad (\tau_2)_t = 0, \qquad (\tau_3)_t = - \frac{6}{7} a_t b_t \Re \Upsilon + \frac{8}{7} a_t^2 \eta \wedge \omega .
    \end{align}
    Moreover,  the full torsion $2$-tensor $T_t$ is given by    \begin{equation}
    \label{Eq.Torsion}
        T_t=-\frac{3}{2}a_t^3b_t^{-2}\eta^2+\frac{1}{2}a_tg|_{\cD}=-2a_t^3b_t^{-2}\eta^2+\frac{1}{2}a_tb_t^{-2}g_t
    \end{equation}
and it has the following derived quantities: 
\begin{align}
    |T_t|_{g_t}^2=\frac{15}{4}a_t^2b_t^{-4}, \quad \div T_t=0, \quad |\nabla_t T_t|_{g_t}^2&=c_0a_t^4b_t^{-8},
\end{align}
for some constant $c_0\in\R$.
\end{lemma}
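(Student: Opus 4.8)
The plan is to exploit that $a_t$ and $b_t$ are spatially constant, so that exterior differentiation only sees the fixed transverse Calabi--Yau data $(\omega,\Upsilon)$ and the contact form $\eta$. First I would compute the exterior derivatives: since $\rd\Upsilon=0$, $\rd\eta=\omega$, $\rd\omega=\rd\rd\eta=0$ and $\omega\wedge\Upsilon=0$ by type, differentiating \eqref{Eq:varphi_t.LSES} gives $\rd\varphi_t=a_tb_t^2\,\rd(\eta\wedge\omega)=a_tb_t^2\,\omega^2$, while differentiating $\psi_t=-a_tb_t^3\,\eta\wedge\Im\Upsilon+\tfrac12 b_t^4\omega^2$ gives $\rd\psi_t=0$ because $\omega\wedge\Im\Upsilon=0$ and $\omega\wedge\rd\omega=0$. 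To obtain $\ast_t\rd\varphi_t$ I would first record how the $7$-dimensional Hodge star of the anisotropically scaled metric $g_t=a_t^2\eta^2+b_t^2 g|_{\cD}$ acts on a basic $k$-form $\alpha$, namely $\ast_t\alpha=(-1)^k a_t\,b_t^{\,6-2k}\,\eta\wedge\ast_B\alpha$; this follows by passing to a $g_t$-orthonormal coframe $a_t\eta,\,b_te^1,\dots,b_te^6$ adapted to $\omega$ and $\Upsilon$. Applying this with $\ast_B\omega^2=2\omega$ yields $\ast_t\rd\varphi_t=a_tb_t^2\cdot 2a_tb_t^{-2}\,\eta\wedge\omega=2a_t^2\,\eta\wedge\omega$.

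For the torsion forms I would argue as in Proposition \ref{Prop:Torsion.ccY}. Coclosedness $\rd\psi_t=0$ forces $(\tau_1)_t=(\tau_2)_t=0$ via \eqref{eq: Fernandez dpsi}. Then $(\tau_0)_t=\tfrac17\ast_t(\varphi_t\wedge\rd\varphi_t)$; only the term $a_t^2b_t^4\,\eta\wedge\omega^3=6a_t^2b_t^4\,\eta\wedge\vol|_{\cD}$ survives, since the $\Re\Upsilon\wedge\omega^2$ piece is a purely transverse $7$-form and hence vanishes, and because $\ast_t(\eta\wedge\vol|_{\cD})=(a_tb_t^6)^{-1}$ I obtain $(\tau_0)_t=\tfrac{6a_t}{7b_t^2}$. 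Finally, with $(\tau_1)_t=0$, relation \eqref{Eq:Fernandez d} reduces to $\ast_t\tau_3=\rd\varphi_t-(\tau_0)_t\psi_t$, so $(\tau_3)_t=\ast_t\rd\varphi_t-(\tau_0)_t\varphi_t$; substituting the two previous computations gives $(\tau_3)_t=-\tfrac67 a_tb_t\,\Re\Upsilon+\tfrac87 a_t^2\,\eta\wedge\omega$.

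The crux is the full torsion $2$-tensor. With $(\tau_1)_t=(\tau_2)_t=0$, formula \eqref{Eq:Torsion} collapses to $T_t=\tfrac14(\tau_0)_t\,g_t-\tfrac14\,\rj_{\varphi_t}\big((\tau_3)_t\big)$, so everything hinges on evaluating the operator \eqref{Eq:j.operator} on $\Re\Upsilon$ and on $\eta\wedge\omega$. Since $(\tau_3)_t\in\Omega^3_{27}$, the tensor $\rj_{\varphi_t}((\tau_3)_t)$ is symmetric and trace-free, and by $\SU(3)$-invariance of the transverse data it must be block-diagonal of the form $c_1\eta^2+c_2\,g|_{\cD}$; tracelessness gives one relation between $c_1$ and $c_2$, while a single explicit component of $\rj_{\varphi_t}$ — most cleanly computed in the adapted coframe above — pins down the remaining constant. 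I expect this frame evaluation of $\rj_{\varphi_t}(\Re\Upsilon)$ and $\rj_{\varphi_t}(\eta\wedge\omega)$ to be the main obstacle, being the only genuinely multilinear step; combining it with \eqref{Eq:Torsion} should produce $T_t=-\tfrac32 a_t^3b_t^{-2}\eta^2+\tfrac12 a_t\,g|_{\cD}$, which rewrites as $-2a_t^3b_t^{-2}\eta^2+\tfrac12 a_tb_t^{-2}g_t$ using $g|_{\cD}=b_t^{-2}(g_t-a_t^2\eta^2)$. As a consistency check I would verify $\tr_{g_t}T_t=\tfrac74(\tau_0)_t$.

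The derived quantities then follow by linear algebra and Sasakian identities. Writing $T_t$ in the $g_t$-orthonormal coframe, the pieces $\eta^2$ and $g|_{\cD}$ are orthogonal with squared norms $a_t^{-4}$ and $6b_t^{-4}$ respectively, giving $|T_t|^2_{g_t}=\tfrac94 a_t^2b_t^{-4}+\tfrac64 a_t^2b_t^{-4}=\tfrac{15}{4}a_t^2b_t^{-4}$. For the divergence I would use the form $T_t=\tfrac12 a_tb_t^{-2}g_t-2a_t^3b_t^{-2}\eta^2$: the metric term is $g_t$-parallel, so $\div_{g_t}T_t=-2a_t^3b_t^{-2}\div_{g_t}(\eta\otimes\eta)$, and since $\xi$ remains Killing for $g_t$ (so that $\div_{g_t}\eta=0$) and $\nabla^{g_t}_\xi\eta=0$ because $\xi\lrcorner\rd\eta=\xi\lrcorner\omega=0$, both contributions to $\div_{g_t}(\eta\otimes\eta)$ vanish, whence $\div_{g_t}T_t=0$. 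Finally, the same two facts give $\nabla^{g_t}\eta=\tfrac12\omega$ as an antisymmetric $2$-tensor, so that $\nabla_t T_t=-2a_t^3b_t^{-2}\,\nabla^{g_t}(\eta\otimes\eta)$ is a fixed contraction of $\omega$ and $\eta$; taking norms and tracking the powers $|\omega|^2_{g_t}\propto b_t^{-4}$ and $|\eta|^2_{g_t}=a_t^{-2}$ yields $|\nabla_t T_t|^2_{g_t}=c_0\,a_t^4b_t^{-8}$ for a universal constant $c_0$, as claimed.
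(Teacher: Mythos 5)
Your proposal is correct, but note that the paper itself offers no proof of this lemma: it is imported verbatim from \cite{Lotay2022}, so there is nothing internal to compare against. Your verification of $\rd\varphi_t$, $\rd\psi_t$, $\ast_t\rd\varphi_t$ (via the scaling rule $\ast_t\alpha=(-1)^k a_tb_t^{6-2k}\eta\wedge\ast_B\alpha$ on basic $k$-forms), and of the torsion forms via \eqref{Eq:Fernandez d}--\eqref{eq: Fernandez dpsi} is exactly the natural route and is carried out correctly; likewise the norm, divergence and $|\nabla_tT_t|^2$ computations, where the key inputs ($\xi$ Killing of constant $g_t$-length, $\nabla^{g_t}\eta=\tfrac12\omega$, $\xi\lrcorner\omega=0$) are all justified. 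The one step you leave schematic — evaluating $\rj_{\varphi_t}$ on the two blocks of $(\tau_3)_t$ — does close as you predict: in the $g_t$-orthonormal coframe $f^0=a_t\eta$, $f^i=b_te^i$ one finds
\begin{equation*}
\rj_{\varphi_t}(\Re\Upsilon)=4\,b_t^{-1}\,g|_{\cD},
\qquad
\rj_{\varphi_t}(\eta\wedge\omega)=6\,a_tb_t^{-2}\,\eta^2+2\,a_t^{-1}\,g|_{\cD},
\end{equation*}
which satisfies the cross-check $b_t^3\,\rj_{\varphi_t}(\Re\Upsilon)+a_tb_t^2\,\rj_{\varphi_t}(\eta\wedge\omega)=\rj_{\varphi_t}(\varphi_t)=6g_t$, and substituting into $T_t=\tfrac14(\tau_0)_t\,g_t-\tfrac14\rj_{\varphi_t}((\tau_3)_t)$ gives precisely $-\tfrac32a_t^3b_t^{-2}\eta^2+\tfrac12a_t\,g|_{\cD}$. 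Two small cautions: your trace check $\tr_{g_t}T_t=\tfrac74(\tau_0)_t$ only tests the $\tfrac14\tau_0 g$ block (since $\rj_{\varphi_t}((\tau_3)_t)$ is trace-free), so it cannot substitute for the frame evaluation; and the block-diagonal ansatz for $\rj_{\varphi_t}$ does need the transverse $\U(3)$-invariance argument you invoke, since a priori $\rj_{\varphi_t}(\Re\Upsilon)$ could have off-diagonal $\eta\odot(\cdot)$ components — these are excluded exactly because every ingredient is either basic or proportional to $\eta$.
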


\subsection{Solving the modified Laplacian coflow}    
We now proceed in a similar way to \cite{Lotay2022}, obtaining an ODE in terms of $a_t$ and $b_t$ such that the family $\{\varphi_t\}$ satisfies the modified coflow.

\begin{theorem} \label{Thm:modified.coflow.LSES}
    The family of $\mathrm{G}_2$-structures $\{\varphi_t\}$ defined by \eqref{Eq:varphi_t.LSES} solves the modified Laplacian coflow with initial condition \eqref{eq: phi_0} if, and only if, the functions $a_t$ and $b_t$ satisfy:
\begin{align}
    a_t &= \epsilon b_t^{-3}, \label{Eq:a_t.b_t.LSES}\\
    \frac{d}{d t} b_t &= \frac{1}{2} \epsilon b_t^{-9} (A b_t^5 - \epsilon),
\label{Eq:a_t.b_t.2.LSES}
\end{align}
    with $a_0 = \epsilon$ and $b_0 = 1$. 
\end{theorem}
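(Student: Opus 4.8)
The plan is to substitute the Ansatz \eqref{Eq:varphi_t.LSES} directly into the modified Laplacian coflow \eqref{Eq:Modified.coflow} and exploit the fact that $a_t, b_t$ are spatially constant while $\eta$, $\omega$, $\Upsilon$ are $t$-independent: both sides then collapse to linear combinations of the two fixed $4$-forms $\eta \wedge \Im \Upsilon$ and $\omega^2$. Matching the coefficients of these two pointwise linearly independent forms should yield exactly two scalar ODEs in $a_t, b_t$, which I expect to reorganize into \eqref{Eq:a_t.b_t.LSES} and \eqref{Eq:a_t.b_t.2.LSES}.

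First I would differentiate the dual $4$-form $\psi_t = -a_t b_t^3 \, \eta \wedge \Im \Upsilon + \tfrac12 b_t^4 \omega^2$ from Lemma \ref{Lem:torsion.ccY.LSES} in time, obtaining
$$\frac{\del}{\del t}\psi_t = -\frac{d}{dt}\!\left(a_t b_t^3\right) \eta \wedge \Im \Upsilon + 2 b_t^3 \dot b_t \, \omega^2.$$
For the right-hand side I would again invoke Lemma \ref{Lem:torsion.ccY.LSES}. Since $\psi_t$ is closed, $\Delta_t \psi_t = \rd \ast_t \rd \varphi_t = \rd(2 a_t^2 \, \eta \wedge \omega) = 2 a_t^2 \omega^2$, using $\rd \eta = \omega$ and $\rd \omega = 0$. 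For the de Turck term, $(\tau_0)_t = \tfrac{6 a_t}{7 b_t^2}$ is spatially constant, so
$$\rd\!\left(\Big(A - \tfrac{7}{2}(\tau_0)_t\Big)\varphi_t\right) = \Big(A - \frac{3 a_t}{b_t^2}\Big)\rd \varphi_t = \Big(A - \frac{3 a_t}{b_t^2}\Big) a_t b_t^2 \, \omega^2.$$
Adding these gives a right-hand side equal to $\left(A a_t b_t^2 - a_t^2\right)\omega^2$, with no $\eta \wedge \Im \Upsilon$ component.

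Finally I would separate the two components. Contracting the resulting identity with the (fixed) Reeb field $\xi$ isolates the $\eta \wedge \Im \Upsilon$ term, since $\xi \lrcorner \omega = 0$ and $\xi \lrcorner \Im \Upsilon = 0$ while $\xi \lrcorner \eta = 1$; this forces $\tfrac{d}{dt}(a_t b_t^3) = 0$, and with $a_0 = \epsilon$, $b_0 = 1$ it integrates to $a_t = \epsilon b_t^{-3}$, which is \eqref{Eq:a_t.b_t.LSES}. The complementary $\omega^2$ component gives $2 b_t^3 \dot b_t = A a_t b_t^2 - a_t^2$; substituting $a_t = \epsilon b_t^{-3}$ and simplifying produces $\dot b_t = \tfrac12 \epsilon b_t^{-9}(A b_t^5 - \epsilon)$, which is \eqref{Eq:a_t.b_t.2.LSES}. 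The computation is essentially mechanical, and the converse direction follows by reading the same equalities backwards; the only points needing care are the linear independence of $\eta \wedge \Im \Upsilon$ and $\omega^2$ (cleanly handled by the contraction with $\xi$) and the bookkeeping of the powers of $a_t, b_t$, so I do not anticipate any genuine obstacle beyond routine algebra.
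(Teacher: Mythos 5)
Your proposal is correct and follows essentially the same route as the paper: both compute $\Delta_t\psi_t = 2a_t^2\omega^2$ and the de Turck term $a_t(Ab_t^2-3a_t)\omega^2$ from Lemma \ref{Lem:torsion.ccY.LSES}, match coefficients of $\eta\wedge\Im\Upsilon$ and $\omega^2$ to get $\tfrac{d}{dt}(a_tb_t^3)=0$ and $\tfrac{d}{dt}(b_t^4)=2a_t(Ab_t^2-a_t)$, and then integrate. Your explicit contraction with $\xi$ to justify the linear independence of the two components is a small, sound refinement that the paper leaves implicit.
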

\begin{proof}
    One can readily check that

    \begin{align}
        \Delta_t\psi_t = \rd \ast_t \rd \varphi_t = \rd (2a_t^2 \eta \wedge \omega) = 2 a_t^2 \omega^2.
    \end{align}
    Additionally from Lemma \ref{Lem:torsion.ccY.LSES}, we have
    \begin{align}
        \rd \left( \left( A - \frac{7}{2} (\tau_0)_t \right) \varphi_t \right) = \left( A - 3 \frac{a_t}{b_t^2} \right) a_t b_t^2 \omega^2 = a_t (A b_t^2 - 3 a_t) \omega^2.
    \end{align}

    Assuming our Ansatz along the flow, we have
    \begin{align}
        \frac{\del}{\del t} \psi_t = \frac{1}{2} \frac{d}{dt} (b_t^4) \omega^2 - \frac{d}{dt} (a_t b_t^3) \eta \wedge \Im \Upsilon,
    \end{align}
    hence the modified coflow results in the evolution equations
    \begin{align}
        \frac{d}{dt} (b_t^4) = 2 a_t (A b_t^2 - a_t) 
        \qandq 
        \frac{d}{dt} (a_t b_t^3) = 0.
    \end{align}

    The latter equation and initial conditions imply that $a_t = \epsilon b_t^{-3}$. Plugging this back into the former equation yields
    \begin{align}
        \frac{d}{dt} (b_t^4) = 2 \epsilon b_t^{-6} (A b_t^5 - \epsilon)
    \end{align}
    or, equivalently,
    \begin{align}
        \frac{d}{dt} b_t = \frac{1}{2} \epsilon b_t^{-9} (A b_t^5 - \epsilon),
    \end{align}
    as claimed.
\end{proof}

In particular, when $A=0$, substituting $a_t = \epsilon b_t^{-3}$  into \eqref{Eq:a_t.b_t.2.LSES} gives:  

\begin{cor}
\label{Cor: modified.coflow.A=0.LSES}
    Consider the functions 
\begin{align}
    a_t 
    &= \epsilon (1 - 5\epsilon^2 t)^{-\frac{3}{10}}, \\
    b_t 
    &= (1 - 5\epsilon^2 t)^{\frac{1}{10}}.
\end{align}
    Then the family $\{\varphi_t\}$ of $\rG_2$-structures defined by \eqref{Eq:varphi_t.LSES} satisfies the modified Laplacian coflow with constant $A = 0$.
\end{cor}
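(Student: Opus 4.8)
The plan is to invoke Theorem \ref{Thm:modified.coflow.LSES} and thereby reduce the claim to a one-line ODE verification. By that theorem, the family $\{\varphi_t\}$ defined by \eqref{Eq:varphi_t.LSES} solves the modified Laplacian coflow with $A=0$ precisely when the scalar functions satisfy the algebraic constraint $a_t=\epsilon b_t^{-3}$ of \eqref{Eq:a_t.b_t.LSES}, the initial conditions $a_0=\epsilon$ and $b_0=1$, and the separable first-order ODE obtained by setting $A=0$ in \eqref{Eq:a_t.b_t.2.LSES}, namely $\frac{\rd}{\rd t} b_t = -\tfrac{1}{2}\epsilon^2 b_t^{-9}$. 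So everything comes down to exhibiting the solution of this single ODE and reading off $a_t$ from the constraint.

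First I would solve the ODE by separation of variables: writing $b^9\,\rd b = -\tfrac{1}{2}\epsilon^2\,\rd t$ and integrating gives $\tfrac{1}{10} b^{10} = C - \tfrac{1}{2}\epsilon^2 t$; the initial value $b_0=1$ forces $C=\tfrac{1}{10}$, whence $b_t^{10} = 1 - 5\epsilon^2 t$ and therefore $b_t = (1-5\epsilon^2 t)^{1/10}$. Substituting into $a_t=\epsilon b_t^{-3}$ yields $a_t = \epsilon(1-5\epsilon^2 t)^{-3/10}$, and $a_0=\epsilon$, $b_0=1$ hold by inspection. Equivalently—and this is the cleaner way to present it, since the formulas are already given—I would simply verify them directly: differentiating $b_t=(1-5\epsilon^2 t)^{1/10}$ via the chain rule gives $\frac{\rd}{\rd t} b_t = \tfrac{1}{10}(-5\epsilon^2)(1-5\epsilon^2 t)^{-9/10} = -\tfrac{1}{2}\epsilon^2 b_t^{-9}$, which is exactly \eqref{Eq:a_t.b_t.2.LSES} at $A=0$, while the relation $a_t=\epsilon b_t^{-3}$ and the initial data are immediate.

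There is no genuine obstacle in this argument: it is a routine exercise in separation of variables and the chain rule, all structural content having been absorbed into Theorem \ref{Thm:modified.coflow.LSES}. The only point meriting minor care is the bookkeeping of the fractional exponents—confirming that the exponent $-\tfrac{9}{10}$ produced by differentiation matches $b_t^{-9}=(1-5\epsilon^2 t)^{-9/10}$, and that the numerical factor $\tfrac{1}{10}\cdot 5 = \tfrac{1}{2}$ correctly reproduces the coefficient $\tfrac{1}{2}\epsilon^2$ in the reduced ODE.
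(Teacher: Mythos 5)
Your proposal is correct and follows exactly the paper's route: the corollary is obtained by specializing Theorem \ref{Thm:modified.coflow.LSES} to $A=0$, reducing \eqref{Eq:a_t.b_t.2.LSES} to $\frac{\rd}{\rd t}b_t=-\tfrac{1}{2}\epsilon^2 b_t^{-9}$, and integrating this separable ODE with $b_0=1$ to get $b_t^{10}=1-5\epsilon^2 t$, after which $a_t=\epsilon b_t^{-3}$ gives the stated formula. Your version merely spells out the separation-of-variables step that the paper leaves implicit; the exponent and coefficient bookkeeping checks out.
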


\subsection{Singularity analysis}
\label{Subsect:Sing}

Since $a_t$ is an explicit function of $b_t$, then $\varphi_t$ depends only on $b_t$. Consequently, the Riemannian tensors relevant to the modified Laplacian coflow are derived from and measured by the $\gt$-metric \eqref{eq: metric.vol.t} induced by $\varphi_t$. Therefore, we are particularly interested in the behavior of the system:

\begin{align} \label{Eq:ODE}
    \frac{d}{dt} b_t 
    = \frac{1}{2} \epsilon b_t^{-9} (A b_t^5 - \epsilon).
\end{align}

Understanding the dynamics of $b_t$ will provide insight into the evolution of the $\gt$-structure and the associated geometric quantities under the modified Laplacian coflow. 

\begin{table}[H]
\centering
\begin{tabular}{|c|c|c|}
\hline
\textbf{Condition} & \textbf{Steady State} & \textbf{Solution Behavior} \\
\hline
$A < 0$ & $(\frac{\epsilon}{A})^{\frac{1}{5}} < 0$ (stable) & $b_t$ with $b_0 = 1$ monotonically decreasing \\
\hline
$A = 0$ & No steady state & \begin{tabular}{@{}c@{}} $b_t = (1-5\epsilon^2 t)^{\frac{1}{10}}$, \\ monotonically decreasing and \\ collapses at $T = \frac{1}{5\epsilon^2}$ \end{tabular} \\
\hline
$0 < A < \epsilon$ & $0 < (\frac{\epsilon}{A})^{\frac{1}{5}} < 1$ (unstable) & $b_t$ with $b_0 = 1$ monotonically decreasing \\
\hline
$A = \epsilon > 0$ & $1$ (unstable) & $b_t$ with $b_0 = 1$ is constant \\
\hline
$0 < \epsilon < A$ & $(\frac{\epsilon}{A})^{\frac{1}{5}} > 1$ (unstable) & $b_t$ with $b_0 = 1$ monotonically increasing \\
\hline
\end{tabular}
\caption{Summary of steady state and solution behaviour for various regimes of $A$ and $\epsilon$.}
\label{table:summary}
\end{table}

The ODE \eqref{Eq:ODE} is separable, and it can be checked that if $A \neq 0, \eps$, then the solution with $b_0 = 1$ satisfies
\begin{equation}
    \frac{b^5}{5A} + \frac{\epsilon}{5A^2} \ln|Ab^5-\epsilon| = \frac{1}{2} \epsilon t + \frac{1}{5A} + \frac{\epsilon}{5A^2} \ln|A -\epsilon|.
\end{equation}

If $A < \epsilon$, then the solution $b \rightarrow 0$ as
\begin{equation}
    t \rightarrow \frac{2}{5\epsilon A} \left[ \frac{\epsilon}{A} \ln \left| \frac{\epsilon}{\epsilon - A} \right| - 1 \right],
\end{equation}
and $A > \epsilon$ then the solution $b \rightarrow \infty$ as $t \rightarrow \infty$.

Following the approach in \cite{Lotay2022}, we can use a similar quantity to characterize the formation of finite-time singularities when $A = 0$ using the explicit expression for $b_t$. Define 
\begin{equation*}
    \Lambda(x,t)=(|Rm(x,t)|^2_{g_t} +|T(x,t)|^4_{g_t} +|\nabla^{g_t} T(x,t)|^2_{g_t})^{\frac{1}{2}}
\end{equation*}
for $x\in M$ and time $t$. We then let
\begin{equation}\label{eq:Lambda.t}
    \Lambda(t)=\sup_{x\in M}\Lambda(x,t)
\end{equation}
As a direct consequence of \cite{Lotay2022}*{Proposition 3.5}, we find that the norm of the Riemann curvature tensor associated with the metric $g_t$, related to the solution of the modified Laplacian coflow $\varphi_t$, is given by
\begin{align}
 |Rm_t|^2_{g_t}= b_t^{-4} |Rm_0^{\cD_0}|_{g_0}^2 + b_t^{-20} c_0\epsilon^4. 
\end{align}
We can plug in the quantities from Lemma \ref{Lem:torsion.ccY.LSES} to compute $\Lambda$ of the family of $\mathrm{G}_2$-structures $\{\varphi_t\}$ defined by \eqref{Eq:varphi_t.LSES}, which solves the modified Laplacian coflow with initial condition \eqref{eq: phi_0} and satisfies the system \eqref{Eq:a_t.b_t.LSES} and \eqref{Eq:a_t.b_t.2.LSES}. In particular, we get

\begin{equation*}
    \Lambda(x,t)=b_t^{-10} \left(b_t^{16}|Rm(x)_0^{\cD_0}|^2_{g_0} +2c_0\epsilon^4+\left(\frac{15}{4}\right)^2\epsilon^4 \right)^{\frac{1}{2}}
\end{equation*}
{In \cite{Chen2018}, Chen defined a class of \emph{reasonable flows} of $G_2$-structures, established a Shi-type estimate, and used it to derive an estimate for the blow-up rate on a compact manifold. Moreover, the modified Laplacian coflow is included in this class of flows. (The Laplacian coflow is not included in this set since it is yet to be shown if it has short-time existence and uniqueness.) We therefore introduce the following definition, which will be useful in the analysis of singularities.}
\begin{definition}\label{dfn:sing.types}
    Suppose that $(M^7,\varphi_t,\psi_t,g_t)$ is a solution to a {reasonable} flow of $\gt$-structures on a closed manifold on a maximal time interval $[0,T)$ and let $\Lambda(t)$ be as in \eqref{eq:Lambda.t}.
    
    If we have a finite-time singularity, i.e.~$T<\infty$, we say that the solution forms  
\begin{itemize}
    \item a \emph{Type I singularity} (rapidly forming) if $\sup_{t\in[0,T)}(T-t)\Lambda(t) <\infty$; and otherwise 
    \item a \emph{Type IIa singularity} (slowly forming) if $\sup_{t\in [0,T)}(T-t)\Lambda(t)=\infty$.
\end{itemize}
\end{definition}

If $A = 0$, as indicated by Corollary \ref{Cor: modified.coflow.A=0.LSES}, the solutions take the form $a_t = \epsilon (1 - 5\epsilon^2 t)^{-3/10}$ and $b_t = (1 - 5\epsilon^2 t)^{1/10}$. In this context, we can analyze the asymptotic behavior of the solutions of the modified Laplacian coflow, akin to the approach outlined in \cite{Lotay2022}. Specifically, we explore how the solutions behave as $t$ approaches the maximal time $\frac{1}{5\epsilon^2}$, drawing parallels to the conclusions drawn in the study of the Laplacian flow in \cite{Lotay2022}.

\begin{prop}
    Let $(M^7, \eta, \Phi, \Upsilon)$ be a compact contact Calabi--Yau $7$-manifold with transverse Ricci-flat K\"{a}hler form $\omega = d\eta$ and transverse holomorphic $(3,0)$-form $\Upsilon$.
    The solution to the modified Laplacian coflow with $A = 0$ and initial condition \eqref{eq: phi_0} has a Type I finite-time singularity at $T = \frac{1}{5\epsilon^2}$. Further, after normalising $(M,g_t)$ to a fixed volume, the solution collapses to $\mathbb{R}$, as $t \to T$.
\end{prop}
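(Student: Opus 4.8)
The statement splits into two independent assertions—a Type I finite-time singularity and a volume-normalised collapse—and I would treat each by direct substitution of the explicit solution from Corollary \ref{Cor: modified.coflow.A=0.LSES}, namely $a_t = \epsilon(1-5\epsilon^2 t)^{-3/10}$ and $b_t = (1-5\epsilon^2 t)^{1/10}$, whose maximal existence time is $T = \frac{1}{5\epsilon^2}$. The single identity driving everything is
\begin{equation*}
    b_t^{10} = 1 - 5\epsilon^2 t = 5\epsilon^2 (T-t), \qquad\text{hence}\qquad b_t^{-10} = \frac{1}{5\epsilon^2(T-t)}.
\end{equation*}

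For the Type I claim I would start from the expression for $\Lambda(x,t)$ already obtained in \S\ref{Subsect:Sing}, namely $\Lambda(x,t) = b_t^{-10}\bigl(b_t^{16}|Rm(x)_0^{\cD_0}|^2_{g_0} + 2c_0\epsilon^4 + \left(\frac{15}{4}\right)^2\epsilon^4\bigr)^{1/2}$, where $c_0 \ge 0$ since it arises from the squared norm $|\nabla_t T_t|^2_{g_t}$ in Lemma \ref{Lem:torsion.ccY.LSES}. Since $M$ is compact, $K := \sup_{x\in M}|Rm(x)_0^{\cD_0}|^2_{g_0} < \infty$, and since $b_t$ decreases from $b_0 = 1$ we have $0 \le b_t^{16} \le 1$ on $[0,T)$. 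Substituting the identity above gives
\begin{equation*}
    (T-t)\,\Lambda(t) = \frac{1}{5\epsilon^2}\,\sup_{x\in M}\Bigl(b_t^{16}|Rm(x)_0^{\cD_0}|^2_{g_0} + 2c_0\epsilon^4 + \left(\tfrac{15}{4}\right)^2\epsilon^4\Bigr)^{1/2} \le \frac{1}{5\epsilon^2}\Bigl(K + 2c_0\epsilon^4 + \left(\tfrac{15}{4}\right)^2\epsilon^4\Bigr)^{1/2},
\end{equation*}
a finite bound uniform in $t$; and $\Lambda(t)\to\infty$ as $t\to T$, because $b_t^{-10}\to\infty$ while the bracket tends to the strictly positive constant $\bigl(2c_0 + (\tfrac{15}{4})^2\bigr)\epsilon^4$. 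Thus $T$ is a genuine finite-time singularity which is Type I in the sense of Definition \ref{dfn:sing.types}.

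For the collapse I would first record, from \eqref{eq: metric.vol.t}, the total volume. Integrating $\vol_t = a_t b_t^6\,\eta\wedge\vol|_{\cD}$ and using $a_t b_t^6 = \epsilon(1-5\epsilon^2 t)^{3/10}$ yields $\mathrm{Vol}(M,g_t) = \epsilon(1-5\epsilon^2 t)^{3/10}\,V_0$ with $V_0 = \int_M \eta\wedge\vol|_{\cD}$, which tends to $0$ as $t\to T$. I then rescale $\hat g_t = c_t\,g_t$ with $c_t = (1-5\epsilon^2 t)^{-3/35}$, chosen so that $c_t^{7/2}\,\mathrm{Vol}(M,g_t) = \epsilon V_0$ is constant, i.e. $\hat g_t$ has fixed volume. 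Writing $\hat g_t = c_t a_t^2\,\eta^2 + c_t b_t^2\,g|_{\cD}$ and computing the two conformal factors,
\begin{equation*}
    c_t a_t^2 = \epsilon^2(1-5\epsilon^2 t)^{-24/35} \longrightarrow \infty, \qquad c_t b_t^2 = (1-5\epsilon^2 t)^{4/35}\longrightarrow 0 \qquad (t\to T),
\end{equation*}
shows that the six transverse directions of $\cD$ shrink to zero length while the single Reeb direction stretches without bound, the product of the seven factors remaining the constant $\epsilon^2$. I would conclude that $(M,\hat g_t)$ collapses, leaving only the one-dimensional Reeb foliation $\cF_\xi$ in the limit, that is, collapse to $\R$.

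The Type I estimate is routine substitution into the pre-computed $\Lambda$, so the main obstacle is the final step: converting the eigenvalue asymptotics of $\hat g_t$ into a rigorous convergence statement. One must fix the sense of ``collapse to $\R$''—for instance pointed Gromov--Hausdorff convergence of $(M,\hat g_t, x_0)$ to $(\R, |\cdot|, 0)$, or measured Gromov--Hausdorff collapse with the transverse leaves shrinking—and verify that the transverse extent within a fixed ball is bounded by $\sqrt{c_t b_t^2}\,\diam(M,g|_{\cD})\to 0$ uniformly, so that distances in $(M,\hat g_t)$ agree, up to vanishing error, with distances along the Reeb flow, whose scale $\sqrt{c_t a_t^2}\to\infty$ opens the orbit up to all of $\R$. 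Additional care is needed in the irregular Sasakian case, where the Reeb orbits need not close, so the limit must be described foliation-theoretically rather than as an honest $S^1$-bundle degeneration.
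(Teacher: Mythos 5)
Your argument is correct and is precisely the route the paper intends: the proposition is stated without a written proof, the justification being the $\Lambda$-computation of \S\ref{Subsect:Sing} together with the identity $b_t^{10}=1-5\epsilon^2 t=5\epsilon^2(T-t)$, exactly as you use them, plus the collapsing analysis of \cite{Lotay2022} for the second claim. Your volume-normalisation exponents ($c_t=(1-5\epsilon^2t)^{-3/35}$, $c_ta_t^2\to\infty$, $c_tb_t^2\to 0$) check out, and your closing caveat about fixing a precise notion of ``collapse to $\R$'' (pointed Gromov--Hausdorff convergence, and the irregular Sasakian case) correctly identifies the only point the paper likewise leaves implicit.
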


\section{Breaking the Sasakian structure on a \texorpdfstring{$cCY^7$}{cCY7}} \label{Sect:Breaking.Sasakian}

We now revisit the setup from \S \ref{Sect:cCY}, on a contact Calabi--Yau $7$-manifold $(M^7, \eta, \Phi, \Upsilon)$. Recall that we considered deformations of type II, given by a $1$-parameter family of basic functions $\{f_t\}$, which determines at each $t$ the contact form $\eta_t$, and transverse K\"{a}hler form $\omega_t$ by
\begin{align}
    \eta_t &= \eta + \rd^c f_t, \\
    \omega_t &= d\eta + \rd \rd^c f_t.
\end{align}
Using ideas from \cite{Picard2022flows} and the transverse $\partial \overline{\partial}$-lemma of \cite{Kacimi1990} (see Appendix \ref{Ap.Sasakian}), we now allow the transverse K\"{a}hler structure to vary within the basic cohomology class $[\rd \eta]_B$. This added freedom does not change the transverse complex structure $J$, and so $\Upsilon$ remains a transverse holomorphic volume form throughout.

In other words, we consider on $(M, \eta, \Phi, \Upsilon)$  a transverse $\SU(3)$-structure $(\omega', \Upsilon)$, where $\omega' \in [\rd \eta]_B$. By El Kacimi-Alaoui's transverse $\partial \overline{\partial}$-lemma, we can write
\begin{align}
    \omega' = \rd \eta + \rd \rd^c h,
\end{align}
where $h$ is a basic function. Note that we are determining the function $h$ (up to addition of a constant) from our choice of $\omega'$ and not vice versa. In some sense, we can consider this a breaking of the Sasakian structure, since the transverse K\"{a}hler form $\omega'$ is no longer determined by the contact form $\eta$.

In a similar manner to \S \ref{Sect:cCY}, we define a $\rG_2$-structure by
\begin{align}
    \varphi = \Re \left( \frac{1}{|\Upsilon|_{\omega'}} \Upsilon \right) + |\Upsilon|_{\omega'} \eta \wedge \omega'. \label{Eq:varphi.cCY.moving}
\end{align}
One can verify that the induced metric and volume form on $M$ are
\begin{align}
    g = |\Upsilon|_{\omega'}^2 \eta^2 + g'|_{\cD},
\qandq
    \vol = |\Upsilon|_{\omega'} \eta \wedge \vol'|_{\cD}.
\end{align}
Furthermore, the Hodge star operator acts on a basic $k$-form $\alpha$ by
\begin{align}
    \ast \alpha &= (-1)^k |\Upsilon|_{\omega'} (\eta \wedge \ast_B \alpha), \\
    \ast (\eta \wedge \alpha) &= \frac{1}{|\Upsilon|_{\omega'}} \ast \alpha.
\end{align}
Hence the dual $4$-form is
\begin{align}
    \psi = \ast\varphi
    =\eta \wedge \Im \Upsilon + \frac{1}{2} \omega'^2.
\end{align}
It is easy to see that $\rd \psi = 0$ and so $\varphi$ is a coclosed $\rG_2$-structure.
As in \S \ref{Sect:cCY}, we compute the torsion forms and the Hodge Laplacian of this $\rG_2$-structure.

\begin{prop} \label{Prop:Torsion.cCY.moving}
    Let $(M, \eta, \Phi, \Upsilon)$ be a contact Calabi--Yau $7$-manifold. Let $\omega' \in [\rd \eta]_B$ be a transverse K\"{a}hler structure and $\varphi$ be the $\rG_2$-structure defined by \eqref{Eq:varphi.cCY.moving}. Then the torsion forms of $\varphi$ are given by
    \begin{align}
        \tau_0 = \frac{6}{7} |\Upsilon|_{\omega'}, \qquad \tau_1 = 0, \qquad \tau_2 = 0,
    \end{align}
    and
    \begin{align}
        \tau_3 = (\nabla \log |\Upsilon|_{\omega'}) \lrcorner \left( -\eta \wedge \Im \Upsilon - \frac{1}{2} \omega'^2 \right) - \frac{6}{7} \Re \Upsilon + \frac{8}{7} |\Upsilon|_{\omega'}^2 \eta \wedge d\eta.
    \end{align}
\end{prop}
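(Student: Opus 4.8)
My plan is to run the torsion computation of Proposition \ref{Prop:Torsion.ccY} essentially verbatim, with the transverse form $\omega'$ everywhere in place of $\omega=\rd\eta$, and to isolate the single point where the argument is no longer formally identical: since $\omega'\in[\rd\eta]_B$ need not equal $\rd\eta$, the two basic $(1,1)$-forms $\rd\eta$ and $\omega'$ must be kept distinct, with $\omega'-\rd\eta=\rd\rd^c h$.

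First, because $\psi=\ast\varphi=\eta\wedge\Im\Upsilon+\tfrac12\omega'^2$ is closed, $\varphi$ is coclosed, so \eqref{eq: Fernandez dpsi} forces $\tau_1=0$ and $\tau_2=0$. Next I would differentiate \eqref{Eq:varphi.cCY.moving} exactly as in \eqref{Eq:d.varphi.cCY}, using $\rd\Upsilon=0$ and $\rd\omega'=0$; the only structural change is that the term arising from $\rd(\eta\wedge\omega')=\rd\eta\wedge\omega'$ now reads $|\Upsilon|_{\omega'}\,\rd\eta\wedge\omega'$ rather than $|\Upsilon|_{\omega'}\,\omega'^2$, giving
\[
\rd\varphi = -\frac{1}{|\Upsilon|_{\omega'}}\rd(\log|\Upsilon|_{\omega'})\wedge\Re\Upsilon + |\Upsilon|_{\omega'}\,\rd(\log|\Upsilon|_{\omega'})\wedge\eta\wedge\omega' + |\Upsilon|_{\omega'}\,\rd\eta\wedge\omega'.
\]

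I would then apply the Hodge star via \eqref{Eq:Basic.Hodge.star}–\eqref{Eq:Basic.Hodge.star.3}, with $\ast_B$ the basic star of the $\omega'$-transverse metric. The first two terms are formally identical to the cCY case and, through $\ast_B\Re\Upsilon=\Im\Upsilon$ and $\ast_B\omega'=\tfrac12\omega'^2$, reproduce the interior-product piece $(\nabla\log|\Upsilon|_{\omega'})\lrcorner(-\eta\wedge\Im\Upsilon-\tfrac12\omega'^2)$ of $\tau_3$. The torsion scalar is then read off from $\tau_0=\tfrac17\ast(\varphi\wedge\rd\varphi)$, where a bidegree count annihilates every wedge except the top form $|\Upsilon|_{\omega'}^2\,\eta\wedge\rd\eta\wedge\omega'^2$, and finally $\tau_3=\ast\rd\varphi-\tau_0\varphi$ after collecting the $\Re\Upsilon$, $\eta\wedge\omega'$ and $\eta\wedge\rd\eta$ components.

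The hard part, and the only genuinely new computation, is the contribution of the topological term $|\Upsilon|_{\omega'}\,\rd\eta\wedge\omega'$. Evaluating $\ast_B(\rd\eta\wedge\omega')$ through the transverse Lefschetz/Weil identities yields $\ast_B(\rd\eta\wedge\omega')=(\tr_{\omega'}\rd\eta)\,\omega'-\rd\eta$, while relating $\rd\eta\wedge\omega'^2$ to $\omega'^3$ in $\tau_0$ introduces a factor $\tfrac13\tr_{\omega'}(\rd\eta)$. Thus \emph{both} an $\eta\wedge\omega'$ and an $\eta\wedge\rd\eta$ term appear, weighted by the transverse trace $\tr_{\omega'}(\rd\eta)=3-\tr_{\omega'}(\rd\rd^c h)$. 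I expect this trace to be the crux: the clean stated expressions — $\tau_0=\tfrac67|\Upsilon|_{\omega'}$ and a $\tau_3$ whose topological part is purely $\tfrac87|\Upsilon|_{\omega'}^2\,\eta\wedge\rd\eta$ — are precisely what these steps return in the boundary case $\omega'=\rd\eta$ (equivalently $\rd\rd^c h=0$). For general $\omega'\in[\rd\eta]_B$ the same computation produces additional $\tr_{\omega'}$-weighted terms in both $\tau_0$ and $\tau_3$, so the concluding step must either invoke a hypothesis forcing this trace contribution to drop out or carry the corrections explicitly; reconciling this is where I would concentrate the effort.
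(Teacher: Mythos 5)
Your route is exactly the one the paper intends: the paper gives no written proof of this proposition beyond the remark that it is ``similar to that of Proposition \ref{Prop:Torsion.ccY}, however K\"{a}hler identities are invoked to deal with the extra terms'', and those K\"{a}hler identities are precisely the Lefschetz-type formulas you isolate, namely $\ast_B(\rd\eta\wedge\omega')=(\Lambda_{\omega'}\rd\eta)\,\omega'-\rd\eta$ and $\rd\eta\wedge\omega'^2=\tfrac13(\Lambda_{\omega'}\rd\eta)\,\omega'^3$. Your computation of $\rd\varphi$, the vanishing of $\tau_1,\tau_2$ from coclosedness, and the extraction of $\tau_0=\tfrac17\ast(\varphi\wedge\rd\varphi)$ and $\tau_3=\ast\rd\varphi-\tau_0\varphi$ are all correct and match the template of Proposition \ref{Prop:Torsion.ccY}.

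The worry you flag at the end is not something to be reconciled away: it is a genuine discrepancy with the statement. Carrying your trace through to the end gives
\begin{align*}
\tau_0&=\tfrac27\,|\Upsilon|_{\omega'}\,\Lambda_{\omega'}(\rd\eta),\\
\tau_3&=(\nabla\log|\Upsilon|_{\omega'})\lrcorner\left(-\eta\wedge\Im\Upsilon-\tfrac12\omega'^2\right)-\tfrac27\Lambda_{\omega'}(\rd\eta)\,\Re\Upsilon+|\Upsilon|_{\omega'}^2\,\eta\wedge\left[\tfrac57\Lambda_{\omega'}(\rd\eta)\,\omega'-\rd\eta\right],
\end{align*}
and since $\Lambda_{\omega'}(\rd\eta)=3-\Lambda_{\omega'}(\rd\rd^c h)$ for $\omega'=\rd\eta+\rd\rd^c h$, the stated clean formulas are recovered only when $\Lambda_{\omega'}(\rd\eta)\equiv3$ pointwise, which on a compact manifold forces $h$ to be constant, i.e.\ $\omega'=\rd\eta$. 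Note moreover that even granting $\Lambda_{\omega'}(\rd\eta)=3$, the topological part of $\tau_3$ comes out as $\tfrac{15}{7}|\Upsilon|_{\omega'}^2\,\eta\wedge\omega'-|\Upsilon|_{\omega'}^2\,\eta\wedge\rd\eta$, which agrees with the stated $\tfrac87|\Upsilon|_{\omega'}^2\,\eta\wedge\rd\eta$ only when $\omega'=\rd\eta$; the same implicit substitution $\Lambda_{\omega'}(\rd\eta)=3$ is what produces the $[3\omega'-\rd\eta]$ brackets in Proposition \ref{Prop:Laplacian.psi.cCY.moving}. So your method is the correct one and is essentially the paper's; to actually conclude you must either impose the pointwise hypothesis $\Lambda_{\omega'}(\rd\eta)=3$ or state the torsion forms with the trace carried explicitly as above --- the proposition as literally written does not hold for general $\omega'\in[\rd\eta]_B$.
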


The proof is similar to that of Proposition \ref{Prop:Torsion.ccY}, however K\"{a}hler identities are invoked to deal with the extra terms. Analogous methods yield the Hodge Laplacian:

\begin{prop} \label{Prop:Laplacian.psi.cCY.moving}
    Let $(M, \eta, \Phi, \Upsilon)$ be a contact Calabi--Yau $7$-manifold Let $\omega' \in [\rd \eta]_B$ be a transverse K\"{a}hler structure and $\varphi$ be the $\rG_2$-structure defined by \eqref{Eq:varphi.cCY.moving}. Then we have
    \begin{align}
        \Delta \psi &= \cL_{\nabla (\log |\Upsilon|_{\omega'})} \left( - \eta \wedge \Im \Upsilon - \frac{1}{2} \omega'^2 \right) \\ \nonumber
        &+ 2 |\Upsilon|_{\omega'}^2 \rd (\log |\Upsilon|_{\omega'}) \wedge \eta  \wedge \left[ 3\omega' - \rd \eta \right] + |\Upsilon|_{\omega'}^2 d\eta \wedge \left[ 3\omega' - \rd \eta \right].
    \end{align}
\end{prop}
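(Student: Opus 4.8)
The plan is to exploit coclosedness exactly as in the proof of Proposition \ref{Prop:Laplacian.psi.cCY}. Since $\rd\psi=0$, the Hodge Laplacian collapses to $\Delta\psi=\rd\rd^{\ast}\psi=\rd\ast\rd\varphi$, so the whole computation reduces to differentiating $\ast\rd\varphi$. First I would record $\rd\varphi$ by differentiating \eqref{Eq:varphi.cCY.moving}: using that $\Upsilon$ and $\omega'$ are closed and that $\rd(\eta\wedge\omega')=\rd\eta\wedge\omega'$, one gets
\begin{equation*}
\rd\varphi = -\tfrac{1}{|\Upsilon|_{\omega'}}\rd(\log|\Upsilon|_{\omega'})\wedge\Re\Upsilon + |\Upsilon|_{\omega'}\rd(\log|\Upsilon|_{\omega'})\wedge\eta\wedge\omega' + |\Upsilon|_{\omega'}\,\rd\eta\wedge\omega'.
\end{equation*}
The essential new feature, compared with \S\ref{Sect:cCY}, is that $\rd\eta$ no longer equals $\omega'$, so the last term cannot be rewritten as $|\Upsilon|_{\omega'}\omega'^2$; keeping $\rd\eta$ and $\omega'$ separate throughout is what eventually produces the combination $3\omega'-\rd\eta$ in the statement.

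Next I would obtain $\ast\rd\varphi$. The quickest route is to read it off from the torsion data already established in Proposition \ref{Prop:Torsion.cCY.moving} via the Fernández--Gray identity $\rd\varphi=\tau_0\psi+\ast\tau_3$ (recall $\tau_1=0$), which gives $\ast\rd\varphi=\tau_0\varphi+\tau_3$, i.e.
\begin{equation*}
\ast\rd\varphi = (\nabla(\log|\Upsilon|_{\omega'}))\lrcorner\left(-\eta\wedge\Im\Upsilon - \tfrac12\omega'^2\right) + |\Upsilon|_{\omega'}^2\,\eta\wedge\big[\,\cdots\,\big],
\end{equation*}
where the bracket collects the terms proportional to $\omega'$ and to $\rd\eta$. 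Carrying this step out from scratch needs the basic Hodge star of \eqref{Eq:Basic.Hodge.star}--\eqref{Eq:Basic.Hodge.star.3} taken with respect to $\omega'$, together with $\ast_B\Re\Upsilon=\Im\Upsilon$ and the transverse Lefschetz identities on the six-dimensional leaf (in particular $\ast_B\omega'^2=2\omega'$ and the evaluation of $\ast_B(\rd\eta\wedge\omega')$ through the trace $\Lambda_{\omega'}\rd\eta$ and $\Lambda_{\omega'}\omega'=3$). This is exactly the place where "Kähler identities are invoked", as flagged after Proposition \ref{Prop:Torsion.cCY.moving}.

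Finally I would apply $\rd$ and reassemble the Lie derivative. The key observation, as in the base case, is that $-\eta\wedge\Im\Upsilon-\tfrac12\omega'^2$ is closed: indeed $\rd\omega'^2=0$, and $\rd\eta\wedge\Im\Upsilon=0$ because $\rd\eta$ is of transverse type $(1,1)$ while $\Im\Upsilon$ is of type $(3,0)+(0,3)$, so every wedge exceeds the available transverse bidegree in complex dimension $3$. Cartan's formula $\cL_X=\rd\iota_X+\iota_X\rd$ then turns $\rd\big((\nabla(\log|\Upsilon|_{\omega'}))\lrcorner(-\eta\wedge\Im\Upsilon-\tfrac12\omega'^2)\big)$ directly into the Lie-derivative term of the statement, with no leftover interior-product contribution. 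Differentiating the remaining $|\Upsilon|_{\omega'}^2\,\eta\wedge\zeta$ pieces through the identity $\rd(|\Upsilon|_{\omega'}^2\eta\wedge\zeta)=2|\Upsilon|_{\omega'}^2\rd(\log|\Upsilon|_{\omega'})\wedge\eta\wedge\zeta+|\Upsilon|_{\omega'}^2\rd\eta\wedge\zeta$ for closed $\zeta$ then yields the two explicit terms $2|\Upsilon|_{\omega'}^2\rd(\log|\Upsilon|_{\omega'})\wedge\eta\wedge[3\omega'-\rd\eta]$ and $|\Upsilon|_{\omega'}^2\rd\eta\wedge[3\omega'-\rd\eta]$.

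I expect the main obstacle to be precisely the middle step: the transverse Hodge-star computation in the regime $\rd\eta\neq\omega'$. One must apply the Lefschetz decomposition of the $(1,1)$-form $\rd\eta$ relative to $\omega'$ and track the trace contributions so that the coefficient $3$ (arising from $\Lambda_{\omega'}\omega'=\dim_{\C}=3$) is placed correctly in $3\omega'-\rd\eta$, while verifying that the remaining trace terms assemble into a closed $\zeta$ so that the subsequent $\rd$ produces no spurious contributions. By contrast, the outer application of $\rd$ and Cartan's formula is routine once $\ast\rd\varphi$ is in hand, so the entire difficulty is concentrated in the transverse Kähler bookkeeping shared with Proposition \ref{Prop:Torsion.cCY.moving}.
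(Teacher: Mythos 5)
Your overall strategy is the same as the paper's: since $\varphi$ is coclosed, $\Delta\psi=\rd\ast\rd\varphi$; your formula for $\rd\varphi$ is correct; the closedness of $-\eta\wedge\Im\Upsilon-\tfrac12\omega'^2$ (via the transverse type argument for $\rd\eta\wedge\Im\Upsilon$), the Cartan-formula step, and the identity $\rd\bigl(|\Upsilon|_{\omega'}^2\,\eta\wedge\zeta\bigr)=2|\Upsilon|_{\omega'}^2\rd(\log|\Upsilon|_{\omega'})\wedge\eta\wedge\zeta+|\Upsilon|_{\omega'}^2\rd\eta\wedge\zeta$ for closed basic $\zeta$ are all fine. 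The genuine gap is exactly where you locate it, namely the evaluation of $\ast_B(\rd\eta\wedge\omega')$, and neither of the two routes you sketch actually closes it. (i) If you read $\ast\rd\varphi=\tau_0\varphi+\tau_3$ off Proposition \ref{Prop:Torsion.cCY.moving} as printed, you obtain
\begin{equation*}
\ast\rd\varphi=(\nabla\log|\Upsilon|_{\omega'})\lrcorner\left(-\eta\wedge\Im\Upsilon-\tfrac12\omega'^2\right)+\tfrac67|\Upsilon|_{\omega'}^2\,\eta\wedge\omega'+\tfrac87|\Upsilon|_{\omega'}^2\,\eta\wedge\rd\eta,
\end{equation*}
and applying $\rd$ produces the bracket $\tfrac67\omega'+\tfrac87\rd\eta$ where the statement has $3\omega'-\rd\eta$; these agree only when $\omega'=\rd\eta$, so this route contradicts the formula you are trying to prove rather than establishing it. (ii) The direct route rests on the Lefschetz identity $\ast_B(\sigma\wedge\omega')=(\Lambda_{\omega'}\sigma)\,\omega'-\sigma$ for a transverse $(1,1)$-form $\sigma$. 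Applied to $\sigma=\rd\eta$ this gives $\ast_B(\rd\eta\wedge\omega')=(\Lambda_{\omega'}\rd\eta)\,\omega'-\rd\eta$: the coefficient of $\omega'$ is the trace of $\rd\eta$ with respect to $\omega'$, not $\Lambda_{\omega'}\omega'=3$ as you suggest. Writing $\omega'=\rd\eta+\rd\rd^c h$ one has $\Lambda_{\omega'}\rd\eta=3-\Lambda_{\omega'}(\rd\rd^c h)$, a nonconstant basic function unless $h$ is transversally harmonic, which on a compact manifold forces $\omega'=\rd\eta$. Hence the ``$3$'' cannot be produced the way you describe, the leftover term $-|\Upsilon|_{\omega'}^2\,\Lambda_{\omega'}(\rd\rd^c h)\,\eta\wedge\omega'$ is not closed, and your argument does not reach the stated right-hand side. (This computation in fact shows that the Proposition as printed, like the $\tau_0$ and $\tau_3$ of Proposition \ref{Prop:Torsion.cCY.moving}, is only valid under the extra hypothesis $\Lambda_{\omega'}\rd\eta=3$; a complete proof must either impose that condition or carry the trace term $\Lambda_{\omega'}\rd\eta$ through to the final formula.)
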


\subsection{The Laplacian coflow} \label{Subsect:Laplacian.coflow.cCY.moving}

We now apply the Laplacian coflow equation to a family of such structures and consider Ans\"{a}tze on our choices of $\eta_t$, $\omega'_t$ and $\Upsilon_t$. We recall that our construction of $\rG_2$-structures previously required certain compatibility conditions to hold.

\begin{theorem} \label{Thm:Laplacian.coflow.ccY.moving}
    Let $(M^7, \eta, \Phi, \Upsilon)$ be a contact Calabi--Yau $7$-manifold and let $\omega' \in [\rd \eta]_B$ be a transverse K\"{a}hler form. Suppose we have a family of contact forms $\eta_t$ and a family of compatible transverse $\SU(3)$-structure $(\omega'_t, \Upsilon_t)$ on $M$ with initial conditions $\eta_0 = \eta$, $\omega'_0 = \omega'$ and $\Upsilon_0 = \Upsilon$ satisfying the coupled differential equations:
    \begin{align}
        \frac{\del}{\del t} \eta_t 
        &= \cL_{\nabla_t (\log |\Upsilon_t|_{\omega'_t})} \eta_t + \alpha_t, \label{Eq:eta.t.system.cCY.moving} \\
        \frac{\del}{\del t} \omega'_t 
        &= - \cL_{\nabla_t (\log |\Upsilon_t|_{\omega'_t})} \omega'_t + \beta_t, \label{Eq:omega.t.system.cCY.moving} \\
        \frac{\del}{\del t} \Upsilon_t 
        &= \cL_{\nabla_t (\log |\Upsilon_t|_{\omega'_t})} \Upsilon_t + \gamma_t. \label{Eq:Upsilon.t.system.cCY.moving}
    \end{align}

    Then the family of $\rG_2$-structures given by
    \begin{align}
        \varphi_t 
        = \Re \left( \frac{1}{|\Upsilon|_{\omega'}} \Upsilon \right) + |\Upsilon|_{\omega'} \eta_t \wedge \omega'_t,
    \end{align}
    is a solution to the Laplacian coflow \eqref{eq: Laplacian.coflow} if, and only if,
    \begin{align}
        \alpha_t &= 0, \\
        -\eta_t \wedge \gamma_t - \omega'_t \wedge \beta_t &= 2 |\Upsilon|_{\omega'} \rd (\log |\Upsilon|_{\omega'}) \wedge \eta_t \wedge \left[ 3\omega'_t - \rd \eta_t \right] + |\Upsilon|_{\omega'}^2 \rd \eta \wedge \left[ 3\omega'_t - \rd \eta_t \right].
    \end{align}
\end{theorem}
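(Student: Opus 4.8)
The plan is to reduce the coflow equation $\del_t\psi_t=\Delta_t\psi_t$ to a single identity among the free forms $\alpha_t,\beta_t,\gamma_t$, and then to split that identity into its $\xi$-horizontal and $\eta_t$-components. Writing $V_t:=\nabla_t(\log|\Upsilon_t|_{\omega'_t})$, I would first record that the dual $4$-form is $\psi_t=-\eta_t\wedge\Im\Upsilon_t+\tfrac12\omega_t'^2$ and differentiate:
\begin{align*}
\del_t\psi_t = -(\del_t\eta_t)\wedge\Im\Upsilon_t-\eta_t\wedge\del_t(\Im\Upsilon_t)+\omega'_t\wedge\del_t\omega'_t .
\end{align*}
Substituting the three Ansatz equations \eqref{Eq:eta.t.system.cCY.moving}--\eqref{Eq:Upsilon.t.system.cCY.moving} and collecting the Lie-derivative contributions, the key observation is that they assemble exactly into $\cL_{V_t}(-\eta_t\wedge\Im\Upsilon_t-\tfrac12\omega_t'^2)$, which is precisely the Lie-derivative term appearing in $\Delta_t\psi_t$ via Proposition \ref{Prop:Laplacian.psi.cCY.moving}. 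This cancellation is engineered by the Ansatz and is the first and most important step.

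After the cancellation, expanding $\cL_{V_t}(\eta_t\wedge\Im\Upsilon_t)$ with Cartan's formula exactly as in the proof of Theorem \ref{Thm:Laplacian.coflow.cCY.fixed.J}, the equation $\del_t\psi_t=\Delta_t\psi_t$ collapses to the master identity
\begin{align*}
-\alpha_t\wedge\Im\Upsilon_t-\eta_t\wedge\Im\gamma_t+\omega'_t\wedge\beta_t
= 2|\Upsilon_t|^2_{\omega'_t}\,\rd(\log|\Upsilon_t|_{\omega'_t})\wedge\eta_t\wedge[3\omega'_t-\rd\eta_t]+|\Upsilon_t|^2_{\omega'_t}\,\rd\eta_t\wedge[3\omega'_t-\rd\eta_t].
\end{align*}
Next I would decompose this along the foliation. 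Since $\xi$ is fixed and $\alpha_t=\del_t\eta_t-\cL_{V_t}\eta_t$ annihilates $\xi$ (both terms preserve $\eta_t(\xi)=1$), $\alpha_t$ is horizontal; contracting the master identity with $\xi$ then isolates the $\eta_t$-component and yields the relation for $\Im\gamma_t$, while the purely horizontal remainder gives $-\alpha_t\wedge\Im\Upsilon_t+\omega'_t\wedge\beta_t=|\Upsilon_t|^2_{\omega'_t}\,\rd\eta_t\wedge[3\omega'_t-\rd\eta_t]$. I would then check that reassembling these two pieces reproduces the stated constraint on $-\eta_t\wedge\gamma_t-\omega'_t\wedge\beta_t$ (up to the sign/notation conventions fixed by $\psi_t$).

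The genuinely new difficulty, and the main obstacle, is establishing $\alpha_t=0$. Because we have broken the Sasakian structure, $\eta_t$ is now a free contact form rather than being slaved to a basic potential as in \S\ref{Sect:cCY}, so its variation must be shown to be purely tangential. I would argue by type with respect to the fixed transverse complex structure $J$: in the horizontal identity the topological term $|\Upsilon_t|^2_{\omega'_t}\,\rd\eta_t\wedge[3\omega'_t-\rd\eta_t]$ is of pure type $(2,2)$, whereas $\alpha_t\wedge\Im\Upsilon_t$ lies in $(3,1)\oplus(1,3)$; using the compatibility of the evolving transverse $\SU(3)$-structure $(\omega'_t,\Upsilon_t)$ to control the $(2,0)\oplus(0,2)$ part of $\beta_t$ (in the spirit of Corollary \ref{cor:Modified.coflow.product.fixed.J}), the $(3,1)\oplus(1,3)$ components must then vanish, forcing $\alpha_t\wedge\Im\Upsilon_t=0$. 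Finally, since $\Upsilon_t$ is a nowhere-vanishing transverse $(3,0)$-form, wedging a $(1,0)\oplus(0,1)$-form against $\Im\Upsilon_t$ is injective, so $\alpha_t=0$.

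The remaining delicate points are bookkeeping rather than conceptual: one must track $\rd\eta$ versus $\rd\eta_t$ carefully, which is where the transverse $\partial\overline{\partial}$-lemma of El Kacimi-Alaoui enters, guaranteeing that each $\omega'_t$ remains a transverse Kähler form in the fixed basic class $[\rd\eta]_B$ and that the horizontal/$\eta_t$ splitting is consistent for all $t$; and one must confirm that $\alpha_t$ is horizontal, which is exactly what makes the $\xi$-contraction clean. I expect the computation itself to be routine once the Lie-derivative cancellation is in place, with the only subtlety being the type argument needed to rule out a nontrivial contact-form deformation.
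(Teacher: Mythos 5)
Your proposal follows essentially the same route as the paper's proof: differentiate $\psi_t = -\eta_t\wedge\Im\Upsilon_t + \tfrac12(\omega'_t)^2$, cancel the Lie-derivative contributions of the Ansatz against the Lie-derivative term in Proposition \ref{Prop:Laplacian.psi.cCY.moving}, and then extract $\alpha_t=0$ together with the combined constraint by transverse type decomposition --- your additional $\xi$-contraction and the explicit injectivity of wedging horizontal $1$-forms with $\Im\Upsilon_t$ (after ruling out cancellation against $\omega'_t\wedge\beta_t^{(2,0)\oplus(0,2)}$ via the fixed $J$) only make the paper's terser type argument more precise. The one point to reconcile is the sign of the $\omega'_t\wedge\beta_t$ term (you obtain $+$, the statement has $-$), which traces back to the paper's own inconsistent sign in the displayed $\del_t\bigl(\tfrac12(\omega'_t)^2\bigr)$ rather than to a flaw in your computation.
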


\begin{proof}
    We see that the associated $4$-form is given by
    \begin{align}
        \psi_t=- \eta_t \wedge \Im \Upsilon_t + \frac{1}{2} (\omega'_t)^2.
    \end{align}
    Differentiating with respect to time, we get
    \[
        \frac{\del}{\del t} \psi_t = -\eta_t \wedge \left( \frac{\del}{\del t} \Im \Upsilon_t \right) - \frac{1}{2} \left( \frac{\del}{\del t} (\omega'_t)^2 \right) - \left( \frac{\del}{\del t} \eta_t \right) \wedge \Im \Upsilon_t.
    \]
    Applying Proposition \ref{Prop:Laplacian.psi.cCY.moving} we get
    \begin{align*}
         &-\eta_t \wedge \left( \frac{\del}{\del t} \Im \Upsilon_t \right) - \left( \frac{\del}{\del t} \eta_t \right) \wedge \Im \Upsilon_t - \frac{1}{2} \left( \frac{\del}{\del t} (\omega'_t)^2 \right) \\
        &= \cL_{\nabla_t (\log |\Upsilon_t|_{\omega'_t})} \left( - \eta_t \wedge \Im \Upsilon_t - \frac{1}{2} (\omega'_t)^2 \right)\\ 
        &\quad + 2 |\Upsilon_t|_{\omega'_t}^2 \rd (\log |\Upsilon_t|_{\omega'_t}) \wedge \eta_t  \wedge \left[ 3\omega'_t - \rd \eta_t \right] + |\Upsilon_t|_{\omega'_t}^2 \rd \eta_t \wedge \left[ 3\omega'_t - \rd \eta_t \right].
    \end{align*}
    Substituting in the systems \eqref{Eq:eta.t.system.cCY.moving}--\eqref{Eq:Upsilon.t.system.cCY.moving}, we obtain
    \[
        -\eta_t \wedge \gamma_t - \alpha_t \wedge \Im \Upsilon_t - \omega'_t \wedge \beta_t = 2 |\Upsilon_t|_{\omega'_t}^2 \rd (\log |\Upsilon_t|_{\omega'_t}) \wedge \eta_t  \wedge \left[ 3\omega'_t - \rd \eta_t \right] + |\Upsilon_t|_{\omega'_t}^2 \rd \eta_t \wedge \left[ 3\omega'_t - d\eta_t \right].
    \]

    We now consider the type decomposition of each term in the above expression with respect to the transverse complex structure $J_t$. On the RHS, the first term
    is of type $\eta_t \wedge [(1,2) \oplus (2,1)]$ and the second term is of type $(2,2)$. On the other hand, term $\Im \Upsilon_t$ is of type $(0,3) \oplus (3,0)$ and $\omega'_t$ is of type $(1,1)$.
    We conclude that $\alpha_t = 0$, and we obtain the desired expression relating $\beta_t$ and $\gamma_t$. In addition, we see that $\beta_t$ is of type $\eta_t \wedge [(0,1) \oplus (1,0)] \oplus (1,1)$ and $\gamma_t$ is of type $(1,2) \oplus (2,1)$.
\end{proof}

\subsection{The modified Laplacian coflow} \label{Subsect:Modified.coflow.cCY.moving}

We perform a similar analysis of the modified Laplacian coflow as that of \S \ref{Subsect:Modified.cof.ccY}, for this new Ansatz. Once again, we note that the torsion form $\tau_0 = \frac{6}{7} |\Upsilon|_{\omega'}$, hence the extra terms with constant $A$ are given by
\begin{align}
    \rd \left( \left( A - \frac{7}{2} \tau_0 \right) \varphi \right) &= - \frac{A}{|\Upsilon|_{\omega'}} d(\log |\Upsilon|_{\omega'}) \wedge \Re (\Upsilon) + A |\Upsilon|_{\omega'} d(\log |\Upsilon|_{\omega'}) \wedge \eta \wedge \omega' + A |\Upsilon|_{\omega'} d\eta \wedge \omega' \nonumber \\
    &\qquad - 6 |\Upsilon|^2_{\omega'} d(\log |\Upsilon|_{\omega'}) \wedge \eta \wedge \omega' - 3 |\Upsilon|^2_{\omega'} d\eta \wedge \omega'. \label{Eq:Modified.coflow.extra.cCY.moving}
\end{align}

Taking into account these extra terms, we get the analogous result for the modified coflow.

\begin{theorem} \label{Thm:Modified.coflow.ccY.moving}
    Let $(M^7, \eta, \Phi, \Upsilon)$ be a contact Calabi--Yau $7$-manifold and let $\omega' \in [d\eta]_B$ be a transverse K\"{a}hler form. Suppose we have a family of contact forms $\{\eta_t\}$ and a family of compatible transverse $\SU(3)$-structure $(\omega'_t, \Upsilon_t)$ on $M$, with initial conditions $\eta_0 = \eta$, $\omega'_0 = \omega'$ and $\Upsilon_0 = \Upsilon$,  satisfying the coupled differential equations
    \begin{align}
        \frac{\del}{\del t} \eta_t = \cL_{\nabla_t (\log |\Upsilon_t|_{\omega'_t})} \eta_t + \alpha_t, \label{Eq:eta.t.system.cCY.moving-2} \\
        \frac{\del}{\del t} \omega'_t = - \cL_{\nabla_t (\log |\Upsilon_t|_{\omega'_t})} \omega'_t + \beta_t, \label{Eq:omega.t.system.cCY.moving-2} \\
        \frac{\del}{\del t} \Upsilon_t = \cL_{\nabla_t (\log |\Upsilon_t|_{\omega'_t})} \Upsilon_t + \gamma_t. \label{Eq:Upsilon.t.system.cCY.moving-2}
    \end{align}

    Then the family of $\rG_2$-structures given by
    \begin{align}
        \varphi_t = \Re \left( \frac{1}{|\Upsilon|_{\omega'}} \Upsilon \right) + |\Upsilon|_{\omega'} \eta_t \wedge \omega'_t,
    \end{align}
    is a solution to the modified coflow \eqref{Eq:Modified.coflow} if, and only if, 
    \begin{align}
        \alpha_t &= \mathcolor{blue}{\frac{A}{|\Upsilon_t|_{\omega_t'}} \rd ( \log |\Upsilon_t|_{\omega_t'}),} \\ \nonumber
        -\eta_t \wedge \gamma_t - \omega'_t \wedge \beta_t &= 2 |\Upsilon|_{\omega'} \rd (\log |\Upsilon|_{\omega'}) \wedge \eta_t \wedge \left[ 3\omega'_t - \rd \eta_t \right] + |\Upsilon|_{\omega'}^2 \rd \eta \wedge \left[ 3\omega'_t - \rd \eta_t \right] \\
        & \mathcolor{blue}{+A |\Upsilon_t|_{\omega_t'} \rd (\log |\Upsilon_t|_{\omega_t'}) \wedge \eta_t \wedge \omega_t' + A |\Upsilon_t|_{\omega_t'} \rd \eta_t \wedge \omega_t'} \\ \nonumber
        & \mathcolor{blue}{-6 |\Upsilon_t|^2_{\omega_t'} \rd (\log |\Upsilon_t|_{\omega_t'}) \wedge \eta_t \wedge \omega_t' -3 |\Upsilon_t|^2_{\omega_t'} \rd \eta_t \wedge \omega_t'.}
    \end{align}
\end{theorem}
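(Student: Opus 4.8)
The plan is to run the argument of Theorem~\ref{Thm:Laplacian.coflow.ccY.moving} essentially unchanged, the only new ingredient being that the right-hand side of the flow equation now carries the additional de Turck contribution \eqref{Eq:Modified.coflow.extra.cCY.moving}. Set $X_t := \nabla_t(\log|\Upsilon_t|_{\omega'_t})$. Starting from the dual $4$-form $\psi_t = -\eta_t\wedge\Im\Upsilon_t + \tfrac12(\omega'_t)^2$, I would first differentiate in $t$, expressing $\tfrac{\del}{\del t}\psi_t$ through $\tfrac{\del}{\del t}\Im\Upsilon_t$, $\tfrac{\del}{\del t}\eta_t$ and $\tfrac{\del}{\del t}(\omega'_t)^2$, and then equate it to $\Delta_t\psi_t + \rd\!\big((A-\tfrac72(\tau_0)_t)\varphi_t\big)$, inserting Proposition~\ref{Prop:Laplacian.psi.cCY.moving} for $\Delta_t\psi_t$ and \eqref{Eq:Modified.coflow.extra.cCY.moving} for the extra term (recall $(\tau_0)_t=\tfrac67|\Upsilon_t|_{\omega'_t}$).

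The decisive simplification is the cancellation of all Lie-derivative terms. Substituting the Ansätze \eqref{Eq:eta.t.system.cCY.moving-2}--\eqref{Eq:Upsilon.t.system.cCY.moving-2} and collecting the $\cL_{X_t}$ pieces, the graded Leibniz rule assembles them into $\cL_{X_t}\big(-\eta_t\wedge\Im\Upsilon_t-\tfrac12(\omega'_t)^2\big)$, which is precisely the transport term produced by $\Delta_t\psi_t$ in Proposition~\ref{Prop:Laplacian.psi.cCY.moving}; the two cancel, exactly as in the unmodified case. What survives is an identity of basic-plus-$\eta_t$ $4$-forms,
\[
 -\eta_t\wedge\gamma_t-\alpha_t\wedge\Im\Upsilon_t-\omega'_t\wedge\beta_t
 =(\text{topological terms})+(\text{de Turck terms}),
\]
where the topological terms are exactly those of Theorem~\ref{Thm:Laplacian.coflow.ccY.moving} and the de Turck terms are the five summands of \eqref{Eq:Modified.coflow.extra.cCY.moving}.

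To read off the two stated equations I would decompose this identity by type with respect to the transverse complex structure $J_t$ (which is fixed along the flow) and by Reeb contraction, just as in Theorems~\ref{Thm:Laplacian.coflow.cCY.fixed.J} and~\ref{Thm:Laplacian.coflow.ccY.moving}. Every topological term and every de Turck term except one carries either a factor $\eta_t$ (landing in $\eta_t\wedge[(1,2)\oplus(2,1)]$) or is of pure basic type $(2,2)$; the single exception is the de Turck summand $-\tfrac{A}{|\Upsilon_t|_{\omega'_t}}\rd(\log|\Upsilon_t|_{\omega'_t})\wedge\Re\Upsilon_t$, which together with $-\alpha_t\wedge\Im\Upsilon_t$ exhausts the pure basic $(1,3)\oplus(3,1)$ part of the identity. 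Matching these two -- via the pointwise transverse relation $\rd^c g\wedge\Im\Upsilon_t=\rd g\wedge\Re\Upsilon_t$ valid for any basic function $g$ -- identifies $\alpha_t$ with the stated basic $1$-form. Substituting this $\alpha_t$ back and equating the remaining components (equivalently, contracting the whole identity with the $t$-independent Reeb field $\xi$) yields the stated relation between $\beta_t$ and $\gamma_t$, the extra blue summands being exactly the four $\eta_t$- and $(2,2)$-type de Turck terms of \eqref{Eq:Modified.coflow.extra.cCY.moving}. Since every step is an equivalence once the Ansätze are imposed, both directions of the `if and only if' follow at once.

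I expect the only genuinely delicate point to be the type bookkeeping: one must verify that, apart from the $\Re\Upsilon_t$ summand, no topological or de Turck term contributes to the basic $(1,3)\oplus(3,1)$ component, so that $\alpha_t$ is determined unambiguously; and one must keep the combinations $3\omega'_t-\rd\eta_t$ inherited from Proposition~\ref{Prop:Laplacian.psi.cCY.moving} intact throughout, since here $\omega'_t\neq\rd\eta_t$. The transverse $\SU(3)$ identity relating $\rd^c g\wedge\Im\Upsilon_t$ and $\rd g\wedge\Re\Upsilon_t$ is the one new algebraic input beyond the unmodified proof; granting it, and the Leibniz cancellation above, the remainder is routine linear algebra on transverse forms.
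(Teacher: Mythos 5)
Your overall strategy is the one the paper itself intends: the paper gives no separate proof of Theorem~\ref{Thm:Modified.coflow.ccY.moving}, merely asserting that one reruns the proof of Theorem~\ref{Thm:Laplacian.coflow.ccY.moving} ``taking into account'' the extra terms \eqref{Eq:Modified.coflow.extra.cCY.moving}; your Leibniz cancellation of the Lie-derivative terms against the transport term of Proposition~\ref{Prop:Laplacian.psi.cCY.moving}, the substitution of the Ans\"atze, and the subsequent decomposition by transverse type and Reeb contraction reproduce exactly that outline.

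However, the one step you yourself single out as new --- the determination of $\alpha_t$ from the basic $(1,3)\oplus(3,1)$ component --- does not close as written. That component of the identity reads
$-\alpha_t\wedge\Im\Upsilon_t=-\tfrac{A}{|\Upsilon_t|_{\omega'_t}}\,\rd(\log|\Upsilon_t|_{\omega'_t})\wedge\Re\Upsilon_t$,
and the relation you invoke to solve it, $\rd^c g\wedge\Im\Upsilon_t=\rd g\wedge\Re\Upsilon_t$, would yield $\alpha_t=\tfrac{A}{|\Upsilon_t|_{\omega'_t}}\rd^c(\log|\Upsilon_t|_{\omega'_t})$ --- not the stated $\tfrac{A}{|\Upsilon_t|_{\omega'_t}}\rd(\log|\Upsilon_t|_{\omega'_t})$. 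These are genuinely different $1$-forms (they differ by the action of the transverse complex structure), so one cannot be silently traded for the other. Moreover, with the paper's normalisation $\rd^c=\tfrac{\sqrt{-1}}{2}(\bar\partial-\partial)$ the correct pointwise relation is $\theta\wedge\Im\Upsilon=(J\theta)\wedge\Re\Upsilon$, equivalently $\rd g\wedge\Re\Upsilon=2\,\rd^c g\wedge\Im\Upsilon$, so your identity is additionally off by a factor of $2$. As it stands your argument derives $\alpha_t=\tfrac{2A}{|\Upsilon_t|_{\omega'_t}}\rd^c(\log|\Upsilon_t|_{\omega'_t})$ rather than the formula in the statement; you must either justify why $\rd$ may replace $\rd^c$ here (generically it cannot) or explicitly flag the discrepancy with the stated theorem. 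The remaining bookkeeping is sound --- in particular, since $J$ is fixed along the flow, $\beta_t$ has no $(2,0)\oplus(0,2)$ part, so $\omega'_t\wedge\beta_t$ indeed contributes nothing to the $(1,3)\oplus(3,1)$ component and the isolation of $\alpha_t$ in that component is legitimate.
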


\subsection{Possible Further Directions} \label{Sect:Possible.solutions}

We speculate how to obtain solutions to the Laplacian coflows from this setup. To do this, we continue from the previous section and follow \cite{Picard2022flows}, by considering pullback via a family of diffeomorphisms. Suppose $\wtilde{\omega_t}'$ is the solution to some perturbed Sasaki--Ricci (transverse K\"{a}hler--Ricci) flow (see \cite{SWZ})
\begin{align}
    \frac{\del}{\del t} \wtilde{\omega_t}' = -2 \Ric^T (\wtilde{\omega_t}',J) + \wtilde{\beta_t},
\end{align}
where $\Ric^T$ denotes the transverse Ricci form. We can use these transverse K\"{a}hler forms to define a time-dependent vector field
\begin{align}
    Y_t = \wtilde{\nabla_t}' (\log |\Upsilon|_{\wtilde{\omega_t}'}),
\end{align}
where $\wtilde{\nabla_t}'$ denotes the Levi-Civita connection of $\wtilde{\omega_t}'$. In turn, we can use this time-dependent vector field to obtain a family of diffeomorphisms $\Theta_t$ satisfying
\begin{align}
    \frac{\del}{\del t} \Theta_t (p) = Y_t(p), \qquad \Theta_0 = id.
\end{align}

Suppose further that $\wtilde{\eta_t}$ and $\wtilde{\Upsilon_t}$ are flows of contact forms and transverse holomorphic volume forms, respectively,  satisfying appropriate compatibility conditions:
\begin{itemize}
    \item $(\wtilde{\omega_t}', \wtilde{\Upsilon_t})$ is a transverse $\SU(3)$-structure with respect to $\wtilde{\eta_t}$, and
    \item $\wtilde{\omega_t}' \in [\rd \wtilde{\eta_t}]_B$.
\end{itemize}
Writing
$\frac{\del}{\del t} \wtilde{\eta_t} 
    = \wtilde{\alpha_t}$ and $\frac{\del}{\del t} \wtilde{\Upsilon_t} 
    = \wtilde{\gamma_t}$,
and pulling back by the diffeomorphisms $\Theta_t$, we can define structures
$$
     \eta_t = \Theta_t^* \wtilde{\eta_t}, \qquad \omega_t' = \Theta_t^* \wtilde{\omega_t}', \qquad \Upsilon_t = \Theta_t^* \wtilde{\Upsilon_t}.
$$
A computation shows that
\begin{align*}
    \frac{\del}{\del t} \eta_t &= \cL_{\nabla_t (\log |\Upsilon_t|_{\omega'_t})} \eta_t + \Theta_t^* \wtilde{\alpha_t}, \\
    \frac{\del}{\del t} \omega'_t 
    &= - \cL_{\nabla_t (\log |\Upsilon_t|_{\omega'_t})} \omega'_t + \Theta_t^* \wtilde{\beta_t}, \\
    \frac{\del}{\del t} \Upsilon_t 
    &= \cL_{\nabla_t (\log |\Upsilon_t|_{\omega'_t})} \Upsilon_t + \Theta_t^* \wtilde{\gamma_t}.
\end{align*}
We thus see that if the auxiliary flows {can be} chosen appropriately, then the pullback yields solutions to the Laplacian coflows.

\begin{remark} \label{Rem:A.priori}
    {It is still unclear if there exist such auxiliary flows that satisfy the above equations. One particular difficulty in finding these is because $\wtilde{\Upsilon_t}$ can only vary by phase shifts since the transverse complex structure is fixed along the Sasaki--Ricci flow.}
\end{remark}

\begin{remark} \label{Rem:More.general.choices}
    This method of obtaining potential solutions also does not encompass the solutions of the Laplacian coflow in \cite{Lotay2022} and those of the modified coflow discuss in \S \ref{Sect:Modified.coflow.LSES}. This is because the functions $a_t$ and $b_t$ scale the transverse K\"{a}hler class. One can instead include similar scaling functions $a_t$ and $b_t$ depending on time to match those solutions, however these introduce more freedoms in how the parameters interact with one another.

    The solutions in \cite{Lotay2022} and \S \ref{Sect:Modified.coflow.LSES} take advantage of starting with a transverse Ricci-flat K\"{a}hler structure, which greatly simplifies the evolution equations, as $|\Upsilon_t|_{\omega_t}$ is just a constant in that case. Altogether, these two methods suggest that a more general transverse flow should be considered, where $\wtilde{\omega_t}'$ can be allowed to move freely through the transverse K\"{a}hler cone.
\end{remark}

This pullback idea can be applied to the earlier cases in \S \ref{Sect:Product} and in \S \ref{Sect:cCY} by adapting the equations accordingly. In those cases, we keep the $S^1$-invariance along the flow, and thus cannot even optimistically expect to obtain torsion-free $\rG_2$ metrics with holonomy $\rG_2$.

\appendix

\section{Sasakian manifolds} \label{Ap.Sasakian}

We briefly review Sasakian manifolds and discuss some useful results involving deformations of Sasakian structures. These occur at the beginning of \S \ref{Sect:cCY} and \S\ref{Sect:Breaking.Sasakian},  defining certain families of $\rG_2$-structures on contact Calabi--Yau $7$-manifolds.

\begin{definition} \label{Def:Contact.structure}
    A contact structure on a $(2n+1)$-manifold $M^{2n+1}$ is a triple $(\xi, \eta, \Phi)$ where $\xi$ is a vector field (called the Reeb vector field), $\eta$ is a $1$-form (called the contact form), and $\Phi$ is a $(1,1)$-tensor such that
    \begin{align}
        \eta(\xi) = 1, \qquad \Phi^2 = -1 + \xi \otimes \eta, \label{Eq:Almost.contact.structure}
    \end{align}
    and
    \begin{align}
        \eta \wedge (\rd \eta)^n \neq 0. \label{Eq:Non-degen}
    \end{align}
\end{definition}

Using the Reeb vector field $\xi$, we obtain a $1$-foliation $\cF_\xi$, and its dual $1$-form $\eta$ determines a codimension $1$ subbundle $\cD = \ker \eta$ of $TM$. We have a canonical splitting
\begin{align}
    TM = \cD \oplus L \xi, \label{Eq:Splitting}
\end{align}
where $L \xi$ is the line bundle spanned by $\xi$. 
The second condition in \eqref{Eq:Almost.contact.structure} implies that the restriction of $\Phi$ to $\cD$ results in an almost-complex structure $J = \Phi|_{\cD}$. We can also consider the quotient bundle $\nu(\cF_\xi) = TM / L\xi$ of the canonical foliation $\cF_\xi$. This space can be identified with $\cD$, however it is convenient to distinguish them, as we aim to deform Sasakian structures by varying one, while keeping the other one fixed.

A Riemannian metric $g$ on $M$ is compatible with the contact structure if
\begin{align}
    g(\Phi(X), \Phi(Y)) = g(X,Y) - \eta(X) \eta(Y),
\end{align}
for any vector fields $X,Y$ on $M$. Such a metric induces an almost-Hermitian metric on $\cD$ and makes the decomposition in \eqref{Eq:Splitting} orthogonal. In this case, the quadruple $(\xi, \eta, \Phi, g)$ is called a contact metric structure. If the metric cone $(C(M),\overline{g}) = (\R_{> 0} \times M, dr^2 + r^2g)$ is K\"{a}hler, then we call the quadruple $(\xi, \eta, \Phi, g)$ a Sasakian structure. Since the Reeb vector field $\xi$ of a Sasakian structure defines several important spaces and bundles, we will define some properties related to basic $k$-forms. 

\begin{definition} \label{Def:Basic.form}
    A $k$-form $\alpha$ on a contact manifold is called \emph{basic} if
    \begin{align}
        \xi \lrcorner\, \alpha = 0, \qquad \cL_\xi \alpha = 0. \label{Eq:Basic.form}
    \end{align}
\end{definition}

Using Cartan's magic formula, one can see that the Lie derivative condition is equivalent to $\xi \lrcorner\, (d\alpha) = 0$, and so the exterior derivative preserves basic forms. Basic cohomology classes, denoted by $[ \cdot ]_B$, can be defined in the usual way with the appropriate restrictions. 

Given a Sasakian structure $\cS = (\xi, \eta, \Phi, g)$ on $M^{2n+1}$, we wish to deform it and obtain Sasakian structures that preserve the {Reeb} vector field $\xi$. We denote this set by
\begin{align}
    \fF(\xi)=\{\textrm{Sasakian structures }\, \cS'=(\xi', \eta', \Phi', g'): \xi^{'}=\xi\}.
\end{align}
Given two Sasakian structures $\cS, \cS' \in \fF(\xi)$ with contact forms $\eta$ and $\eta'$ respectively, one can check that $\zeta = \eta - \eta^{'}$ is basic. As such $[\rd\eta^{'}]_{B}=[\rd\eta]_{B}$ and hence all Sasakian structures in $\fF(\xi)$ correspond to the same basic cohomology class.

Let $\overline{J}$ denote the induced complex structure on $\nu(\cF_\xi)$ and let $\pi_\nu \: TM \rightarrow \nu(\cF_\xi)$ be the quotient map. We define the subset $\fF(\xi, \overline{J}) \subseteq \fF(\xi)$ to be the subset of all Sasakian structures $(\xi^{'},\eta^{'},\Phi^{'},g^{'})\in\fF(\xi)$ such that the diagram
\[
\begin{CD}
TM@>{\Phi'}>>TM\\
@V{\pi_\nu}VV @VV{\pi_\nu}V\\
Q@>>{\overline{J}}>Q
\end{CD}
\]
commutes. The elements of $\fF(\xi,\overline{J})$ are the Sasakian structures with the same transverse holomorphic structure $\overline{J}$. 

We can now give an alternative description of $\fF(\xi, \overline{J})$, but first we need the transverse $ \partial\overline{\partial}$ Lemma due to El Kacimi-Alaoui:

\begin{prop}[\cite{Kacimi1990} {Proposition 3.5.1}] \label{Prop:Transverse.del.delbar}
    Let $(M,\cS)$ be a compact Sasakian manifold, and $\omega,\omega^{'}$ be basic real closed $(1,1)$-forms such that $[\omega]_{B}=[\omega^{'}]_B$. Then there exists a smooth basic function $h$ such that
    \begin{align}
        \omega' = \omega + \sqrt{-1} \partial \overline{\partial} \phi = \omega + \rd \rd^c h,
    \end{align}
    where $\rd^c= \frac{\sqrt{-1}}{2} (\bar{\partial} - \partial)$.
\end{prop}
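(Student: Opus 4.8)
The plan is to reduce the claim to the transverse $\del\delbar$-lemma for the Sasakian foliation $\cF_\xi$ and then transplant the classical compact-Kähler argument to the basic complex. Put $\rho := \omega' - \omega$, a basic real closed $(1,1)$-form with $[\rho]_B = 0$ in basic cohomology $H^2_B(M)$; the goal is a basic function $h$ with $\rho = \sqrt{-1}\,\del\delbar h$, which equals $\rd\rd^c h$ since $\rd = \del + \delbar$ and $\rd^c = \tfrac{\sqrt{-1}}{2}(\delbar - \del)$ give $\rd\rd^c = \sqrt{-1}\,\del\delbar$ on basic forms.

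First I would record the basic Hodge theory. The Reeb field $\xi$ is unit Killing, so the leaves of $\cF_\xi$ are geodesics and the foliation is minimal (taut); for the bundle-like metric $g$ this makes the mean-curvature form vanish, so the transverse codifferential $\delta_B = -\ast_B\,\rd\,\ast_B$ built from the basic Hodge star \eqref{Eq:Basic.Hodge.star} preserves $\Omega^\bullet_B(M)$. The basic Laplacian $\Delta_B = \rd\delta_B + \delta_B\rd$ is then transversally elliptic on the compact $M$, and \cite{Kacimi1990} gives the Hodge decomposition $\Omega^k_B = \cH^k_B \oplus \rd\,\Omega^{k-1}_B \oplus \delta_B\,\Omega^{k+1}_B$ with $\cH^k_B \cong H^k_B(M)$. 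In particular $[\rho]_B = 0$ forces $\rho$ to be $\rd$-exact within the basic complex.

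Next I would bring in the transverse Kähler identities. The transverse metric, complex structure $J$ and Kähler form make $\nu(\cF_\xi)$ pointwise Kähler, so the basic operators $\del,\delbar,L_B = \omega\wedge\,\cdot\,,\Lambda_B$ obey the Kähler commutation relations fibrewise; as in the compact Kähler case this yields $\Delta_B = 2\Delta_{\delbar} = 2\Delta_{\del}$ and a bigrading $\cH^k_B = \bigoplus_{p+q=k}\cH^{p,q}_B$. The $\del\delbar$-lemma argument then runs unchanged: $\rho$ is $\del$- and $\delbar$-closed and $\rd$-exact, hence $\cH^{1,1}_B$-orthogonal, so its $\delbar$-Hodge decomposition collapses to $\rho = \delbar\,\delbar^* G\rho$ for the $\delbar$-Green operator $G$ (the harmonic and $\delbar^*$-terms drop out since $\rho\perp\cH^{1,1}_B$ and $\delbar\rho=0$); feeding in $\del\rho = 0$ and the Kähler identities rewrites this as $\rho = \sqrt{-1}\,\del\delbar h$ with $h$ basic, and reality of $\rho$ lets me take $h$ real. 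This is exactly $\rho = \rd\rd^c h$.

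The main obstacle is the foliated Hodge theory rather than the final algebra: on a general Riemannian foliation the $\rd$-adjoint acquires a mean-curvature term, so $\delta_B$ need not preserve basic forms and $\Delta_B$ need not have a clean kernel. The Sasakian hypothesis is precisely what resolves this, minimality of $\cF_\xi$ giving $\kappa = 0$; granting the resulting basic Hodge decomposition and transverse Kähler identities of \cite{Kacimi1990}, the remainder is the standard compact-Kähler computation with no further analytic input.
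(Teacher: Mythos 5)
The paper does not actually prove this proposition: it is imported verbatim from El Kacimi-Alaoui \cite{Kacimi1990}, so there is no in-paper argument to compare against. Your sketch is a correct reconstruction of the standard proof from that reference — tautness of the Reeb foliation (the Reeb field being unit Killing forces the mean-curvature form to vanish) is indeed the hypothesis that makes the basic Hodge decomposition and the transverse K\"ahler identities available, after which the classical compact-K\"ahler $\partial\bar{\partial}$-argument runs unchanged in the basic complex. The only compressed step is the last one: from $\rho = \bar{\partial}\,(\bar{\partial}^{*}G\rho)$ you still need a second Hodge decomposition of the $(1,0)$-potential $\bar{\partial}^{*}G\rho$ with respect to $\partial$ (using $\partial\rho=0$ and $\bar{\partial}\partial^{*}=-\partial^{*}\bar{\partial}$) to land on $\rho=\sqrt{-1}\,\partial\bar{\partial}h$ with $h$ a genuine basic function, and then realness of $\rho$ to take $h$ real — but this is standard bookkeeping and introduces no new analytic difficulty.
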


As in the K\"{a}hler case, the basic $2$-form $\rd\eta$ can be written locally in terms of a basic potential function $h$, ie. $\rd \eta = \sqrt{-1} \partial \overline{\partial} h$, so Sasakian geometry is locally determined by a basic potential. There exists a characterization of the space of Sasakian metrics on $M$ whose Reeb vector field is $\xi$ and whose transverse holomorphic structure is $J$ as an affine space. We will not require the full description but will use the following:

\begin{definition} \label{Def:Type.II.deformation}
    Given a Sasakian structure $\cS = (\xi, \eta, \Phi, g) \in \fF(\xi, \overline{J})$, a transformation of the form $\eta \mapsto \eta^{'} = \eta + \rd^c h$ where $h$ is a basic function is an instance of a \emph{deformation of type II}. Such a transformation induces a $(1,1)$-tensor $\Phi^{'}$ and Riemannian metric $g^{'}$ by
    \begin{align*}
        &\Phi^{'} = \Phi - (\xi \otimes (\rd^c h)) \circ \Phi \\
        &g^{'} = \rd \eta^{'} \circ (1 \otimes \Phi^{'}) + \eta^{'} \otimes \eta^{'}
    \end{align*}
    The ensuing Sasakian structure $\cS' = (\xi, \eta^{'}, \Phi^{'}, g^{'})$ also lies in $\fF(\xi, \overline{J})$.
\end{definition}

\begin{remark}
    The definition of a deformation of type II is broader than what is stated above. We only make use of the specific case mentioned and refer the reader to \cites{Boyer2008, Boyer2008metric} for the broader context.
\end{remark}

\addcontentsline{toc}{section}{References}
\bibliography{Bibliografia-2020-07}
	
\end{document}